\newtheorem{lem}{Lemma}[section]%
\newtheorem{theorem}[lem]{Theorem}%
\newtheorem{defi}[lem]{Definition}%
\newtheorem{prop}[lem]{Proposition}%
\newtheorem*{note}{Note}%
\renewcommand\a{\alpha} \renewcommand\b{\beta} \newcommand\g{\gamma} \renewcommand\d{\delta}
\newcommand\G{\Gamma}
\renewcommand\o{\omega}
\newcommand\nd{\mathrel{\bigm|\kern-.7em/}}
\newcommand\f{\noindent}
\newcommand\PSL{\mathrm{ PSL}}
\newcommand\AGL{\mathrm{ AGL}}
\newcommand\PL{\mathrm{ P}\Gamma\mathrm{L}}
\newcommand\AGammaL{\mathrm{A}\Gamma\mathrm{L}}
\newcommand\GammaL{\Gamma\mathrm{L}}
\newcommand\POmega{\mathrm{ P}\Omega}
\newcommand\SL{\mathrm{ SL}}
\newcommand\GL{\mathrm{GL}}
\newcommand\GF{\mathrm{ GF}}
\newcommand\GU{\mathrm{GU}}
\newcommand\GO{\mathrm{GO}}
\newcommand\Sp{\mathrm{ Sp}}
\newcommand\rank{\mathrm{rank}}
\newcommand\Aut{\mathrm{Aut}}
\newcommand\soc{\mathrm{ soc}}
\newcommand\Cay{\mathrm{ Cay}}
\newcommand\Sym{\mathrm{ Sym}}
\newcommand\Alt{\mathrm{ Alt}}
\newcommand\mz{{\mathbb Z}}
\newcommand\D{\Delta}
\newcommand\A{{\rm A}}
\newcommand\GG{\mathcal{G}}
\newcommand\K{{K}}
\newcommand\Ome{{\rm\Omega}}
\newcommand\norml{\trianglelefteq}
\definecolor{Thistle}{rgb}{0.847,0.749,0.847}
\definecolor{Khaki}{rgb}{0.941,0.902,0.549}
\definecolor{Orchid}{rgb}{0.855,0.439,0.839}
\definecolor{MediumOrchid}{rgb}{0.729,0.333,0.827}
\definecolor{brown}{rgb}{0.8,0.5,0}
\definecolor{LightBrown}{rgb}{0.8,0.2,0.4}
\definecolor{DarkGray}{rgb}{0.78,0.78,0.78}
\definecolor{DarkMidGray}{rgb}{0.81,0.81,0.81}
\definecolor{MidGray}{rgb}{0.85,0.85,0.85}
\definecolor{LightGray}{rgb}{0.88,0.88,0.88}
\definecolor{VeryLightGray}{rgb}{0.96,0.96,0.96}
\definecolor{GrayA}{rgb}{0.7,0.7,0.7}
\definecolor{GrayB}{rgb}{0.78,0.78,0.78}
\definecolor{GrayC}{rgb}{0.80,0.80,0.80}
\definecolor{GrayD}{rgb}{0.82,0.82,0.82}
\definecolor{GrayE}{rgb}{0.84,0.84,0.84}
\definecolor{GrayF}{rgb}{0.86,0.86,0.86}
\definecolor{GrayG}{rgb}{0.88,0.88,0.88}
\definecolor{GrayH}{rgb}{0.90,0.90,0.90}
\definecolor{GrayI}{rgb}{0.92,0.92,0.92}
\definecolor{GrayJ}{rgb}{0.94,0.94,0.94}
\definecolor{VeryLightBlue}{rgb}{0.9,0.9,1}
\definecolor{LightBlue}{rgb}{0.8,0.8,1}
\definecolor{MidBlue}{rgb}{0.5,0.5,1}
\definecolor{DarkBlue}{rgb}{0,0,0.6}
\definecolor{Gold}{rgb}{1,0.843,0}
\definecolor{LightGreen}{rgb}{0.88,1,0.88}
\definecolor{MidGreen}{rgb}{0.6,1,0.6}
\definecolor{DarkGreen}{rgb}{0,0.6,0}
\definecolor{VeryLightYellow}{rgb}{1,1,0.9}
\definecolor{LightYellow}{rgb}{1,1,0.6}
\definecolor{MidYellow}{rgb}{1,1,0.5}
\definecolor{DarkYellow}{rgb}{1,1,0.2}
\definecolor{VeryLightRed}{rgb}{1,0.9,0.9}
\definecolor{LightRed}{rgb}{1,0.8,0.8}
\definecolor{MidRed}{rgb}{1,0.55,0.55}
\begin{document}

\title{On primitive $2$-closed permutation groups of rank at most four}
\vspace{4 true mm}

\author {{\sc Michael Giudici}\\
{\small\em Department of Mathematics and Statistics}\\
{\small\em The University of Western Australia}\\
{\small\em 35 Stirling Highway, Perth, WA 6009, Australia}\\
{ \texttt{michael.giudici@uwa.edu.au}}\\[1ex]
{\sc Luke Morgan}\\
{\small\em UP FAMNIT}\\
{\small\em University of Primorska}\\
{\small\em Glagolja\v{s}ka 8, Koper, 6000, Slovenia}\\
{ \texttt{luke.morgan@famnit.upr.si}}\\[1ex]
{\sc Jin-Xin Zhou}\\
{\small\em Department of Mathematics}\\
{\small\em Beijing Jiaotong University}\\
{\small\em Beijing 100044, P.R. China}\\
{ \texttt{jxzhou@bjtu.edu.cn}}\\[1ex]
}

\date{}
\openup 0.5\jot

\maketitle

\begin{abstract}
We characterise the primitive 2-closed groups $G$ of rank at most four that are not the automorphism group of a graph or digraph and show that if the degree is at least 2402 then there are just two infinite families or $G\leqslant \AGammaL_1(p^d)$, the 1-dimensional affine semilinear group. These are the first known examples of non-regular 2-closed groups that are not  the automorphism group of a graph or digraph.

\medskip

\noindent{\bf Keywords:} $2$-closed, primitive, automorphism group\\
\noindent{\bf 2020 Mathematics Subject Classification:} 20B25, 05C25.
\end{abstract}

\thispagestyle{empty}

\section{Introduction}
Let $G$ be a transitive permutation group on a set $\Ome$. We say that $G$ is {\em regular} on $\Omega$ if $G_\a=1$ for every $\a\in \Omega$, where $G_\a$ is the \emph{stabiliser} of $\a$ in $G$, defined as the subgroup of $G$ consisting of those elements of $G$ that fix $\a$.
The {\em $2$-closure $G^{(2)}$} of $G$ is the largest subgroup of $\Sym(\Ome)$  containing $G$ which has the same orbits as $G$ in the induced action on $\Ome\times\Ome$. Clearly $G \leqslant G^{(2)}$ and if $G=G^{(2)}$, then $G$ is said to be {\em 2-closed}. Two important classes of $2$-closed groups are regular groups and the automorphism groups of graphs or digraphs. A $2$-closed group may not be the automorphism group of a graph or digraph.  For example, $V_4$ is  regular on the set $\{1,2,3,4\}$, and is therefore $2$-closed, but it is not the automorphism group of any graph or digraph of order four.

 If $\Gamma$ is a  graph or digraph on $\Omega$ and $G\leqslant \Aut(\Gamma)$ then $\Gamma$ is a union of orbital digraphs for $G$ (see Section  \ref{prelim}) and $G^{(2)}\leqslant\Aut(\Gamma)$. Hence $G\leqslant G^{(2)}\leqslant \Aut(\Gamma)$.  Thus if $G\neq G^{(2)}$ then $G$ is not the automorphism group of any graph or digraph. In this paper we seek to determine when there is a graph or digraph $\Gamma$ such that  $G=G^{(2)}=\Aut(\Gamma)$.

The question of when a regular group is the automorphism group of a graph or digraph has already been solved. A graph (or digraph) $\G$ is called a {\em GRR} (or {\em DRR}) of a finite group $G$ if the automorphism group $\Aut(\G)$ of $\G$ is a regular permutation group on $V(\G)$ isomorphic to $G$. The DRR-problem (finding all of the finite groups which admit DRRs) was solved by Babai in \cite{Babai1980} where he proved that, with five exceptions, every finite group admits a DRR. Finding those finite groups admitting GRRs was also an active topic, and the GRR-problem was completely solved in the 1980s (see, for example, \cite{Godsil1981,Godsil1978,Godsil1983}).

Some progress has been made in the study of non-regular $2$-closed permutation groups that occur as the automorphism group of some graph or digraph. See for example, \cite{Xuj-JGroupT2010,Xuj-dm2011,Xuj-e-jc2015}. In particular, Jing Xu \cite{Xuj-e-jc2015} proved that every $2$-closed permutation group that contains a cyclic regular subgroup is the automorphism group of a digraph.
However, as pointed out in \cite{XUmy2008-JAMS} by Ming-Yao Xu, no examples are known of non-regular $2$-closed groups that are not the automorphism groups of graphs or digraphs.
Let $N_2 R$ be the set of numbers $n$ for which there exists a $2$-closed transitive group of degree $n$ without a regular subgroup, and let $NC$ be the set of numbers $n$ for which there exists a vertex-transitive graph of order $n$ that is non-Cayley.
 Ming-Yao Xu  \cite{XUmy2008-JAMS} posed the question of determining $N_2R\setminus NC$. He said that to study this question, ``we should first find non-regular $2$-closed groups that are not the automorphism groups of (di)graphs. We do not know of any such examples." Based on this, in 2012, Jing Xu~\cite{Xuj-2012} posed the following problem:

\medskip
\f{\bf Problem~A}\ Find an infinite family of $2$-closed primitive groups that are not the automorphism group of any graphs or digraphs.
\medskip

The \emph{rank} of a permutation group $G$ on a set $\Omega$ is the number of orbits of $G$ on $\Omega\times\Omega$. Note that the lowest possible rank of a non-trivial permutation group is two -- one orbit being the diagonal  $\{(\alpha,\alpha) \mid \alpha \in \Omega\}$. 
It is not difficult to see that every $2$-closed permutation group of rank two or three is the automorphism group of one of its orbital graphs (see Lemma~\ref{primitive}).
So a $2$-closed primitive group that is not the automorphism group of any graph or digraph has rank at least $4$. In the literature, there are some impressive results on the primitive permutation groups of low rank.
For example, the classification of finite primitive groups of rank at most $5$ has been completed except for the subgroups of affine groups (see \cite{Bannai,Cuypers,Praeger-Soicher}), and the classification of finite affine primitive permutation groups of rank at most $3$ has also been completed (see \cite{Huppert-2-trans,Liebeck-affine}). More recently,  Muzychuk and Spiga~\cite{Muzy-Spiga-JACO2020} classified the finite almost simple primitive groups of rank at most $8$ with
socle a sporadic simple group and the finite almost simple primitive groups of rank at most $n^2$ with
socle an alternating group $\A_n$.

Motivated by the facts listed above,  we aim to classify the primitive $2$-closed permutation groups of degree $n$ and rank at most $4$ that are not the automorphism group of any graph or digraph. In this paper, we shall first classify these groups which are not subgroups of $\AGammaL_1(p^d)$, where $n=p^d$ is not a prime power. Using \cite{Colva4096,Dixon1000,RD-Uaffine1000}, a complete classification of primitive permutation groups of degree less than $4096$ is available to us, and so we use  computer algebra packages such as {\sc Magma} \cite{BCP} to aid our investigation for groups of such degrees.

The following is the main result of this paper. 

\begin{theorem}\label{main-theorem}
Let $G$ be a primitive $2$-closed permutation group of degree $n$ and rank at most $4$ such that $G\not\leqslant\AGammaL_1(p^d)$ with $n=p^d$. Then there is a graph or digraph $\Gamma$ such that $G=\Aut(\Gamma)$ if and only if  $G$ is not permutationally isomorphic to one of the groups below:
\begin{enumerate}[$(1)$]
  \item    $\mathcal G(m)=V\rtimes (D_8\circ \GL_m(3)) \leqslant \AGL_{2m}(3)$, for  $m\geqslant 2$, where $\mathcal G (m)$ has regular normal subgroup $V$ and  $D_8 \circ \GL_m(3)$ preserves the tensor product decomposition $V=X \otimes Y$, where $X$ is a $2$-dimensional $\GF(3)$-space on which $D_8$ preserves a direct sum decomposition and $ Y$ is an $m$-dimensional $\GF(3)$-space on which $\GL_m(3)$ acts naturally.
  \item $\mathcal H(m)=V\rtimes ((C_3\wr C_2)\circ \GL_m(4)).2 \leqslant \AGL_{4m}(2)$, for  $m\geqslant 2$, where $\mathcal H (m)$ has regular normal subgroup $V$ and  $((C_3\wr C_2) \circ \GL_m(4)).2\leqslant\GammaL_{2m}(4)$ preserves the tensor product decomposition $V=X \otimes Y$, where $X$ is a $2$-dimensional $\GF(4)$-space on which $C_3\wr C_2$ preserves a direct sum decomposition and $ Y$ is an $m$-dimensional $\GF(4)$-space on which $\GL_m(4)$ acts naturally.
\item $G$ is $\mathrm{PrimitiveGroup}(n,k)$ of \textsc{Magma}'s database of primitive groups where $(n,k)$ is one of $(25,11)$, 
$(64,27)$, 
$(81,77)$, $(81,87)$,
$(169,41)$,  $(625,547)$, 
and $(2401, 991)$.
\end{enumerate}
\end{theorem}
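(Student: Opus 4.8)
The plan is to first dispose of the low-rank cases and then recast the problem. By Lemma~\ref{primitive} every primitive $2$-closed group of rank at most $3$ is the automorphism group of one of its orbital graphs, so we may assume throughout that $G$ has rank exactly $4$, with non-diagonal orbitals $\Delta_1,\Delta_2,\Delta_3$ and associated orbital digraphs $\Gamma_1,\Gamma_2,\Gamma_3$. Any $G$-invariant graph or digraph on $\Ome$ is a union of a subset of $\{\Delta_0,\Delta_1,\Delta_2,\Delta_3\}$, where $\Delta_0$ is the diagonal; the diagonal contributes nothing to the automorphism group, the automorphisms of $\Delta_i\cup\Delta_j$ are exactly those of its complement $\Delta_0\cup\Delta_k$, hence of $\Delta_k$, and $\Aut(\Delta_1\cup\Delta_2\cup\Delta_3)=\Sym(\Ome)$. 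Thus the only candidates for a graph or digraph with automorphism group $G$ are $\Gamma_1,\Gamma_2,\Gamma_3$ themselves. Writing $A_i=\Aut(\Gamma_i)\geqslant G$, any element fixing the relations $\Delta_i$ and $\Delta_j$ fixes $\Delta_k$ and $\Delta_0$, whence $A_i\cap A_j=G^{(2)}=G$ for $i\ne j$. Therefore $G$ is the automorphism group of a graph or digraph if and only if $A_i=G$ for some $i$; and if $G$ is an exception then each $A_i$ is a proper overgroup of $G$ which is primitive, $2$-closed and of rank $3$, with orbitals $\Delta_0$, $\Delta_i$, $\Delta_j\cup\Delta_k$. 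So the task reduces to classifying the primitive $2$-closed rank-$4$ groups $G\not\leqslant\AGammaL_1(p^d)$ that arise as the intersection, inside $\Sym(\Ome)$, of three primitive $2$-closed rank-$3$ groups pairwise intersecting in $G$.

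For degree $n=p^d<4096$ this is settled by a direct computation with {\sc Magma} \cite{BCP} over the database of primitive groups \cite{Colva4096,Dixon1000,RD-Uaffine1000} (covering all O'Nan--Scott types), which produces precisely the seven groups in part~(3) of the theorem together with the small-degree members $\mathcal G(2),\mathcal G(3),\mathcal H(2)$ of the two families. For $n\geqslant 4096$ the non-affine $G$ are covered by the classification of finite primitive permutation groups of rank at most $5$ for the almost simple, product-action, diagonal and twisted-wreath types (see \cite{Bannai,Cuypers,Praeger-Soicher}): running through that list, together with the structure of their $2$-closures, one checks that in every such case one of the orbital digraphs already satisfies $\Aut(\Gamma_i)=G$, so there are no non-affine exceptions. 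It then remains to treat the affine case $G=V\rtimes G_0\leqslant\AGL_d(p)$, $V=\GF(p)^d$, with $G_0\leqslant\GL_d(p)$ having exactly three orbits on $V\setminus\{0\}$ and $G\not\leqslant\AGammaL_1(p^d)$. Here I would work class-by-class through Aschbacher's classification of the possibilities for $G_0$: in each of the geometric classes other than the tensor-product class $\mathcal C_4$ one shows that either $G\leqslant\AGammaL_1(p^d)$, or the orbit structure on $V\setminus\{0\}$ is forced to be such that one of the orbital digraphs has automorphism group equal to $G$. This exploits the extra leverage that each $A_i$ is a rank-$3$ primitive group containing $G$: the affine ones among the $A_i$ are governed by Liebeck's classification of affine rank-$3$ groups \cite{Liebeck-affine}, while the non-affine ones are Hamming-graph groups $S_{p^m}\wr S_2$ in product action, and knowing the automorphism groups of the graphs attached to the rank-$3$ overgroups pins $G$ down.

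In the tensor case one shows that $G_0$ preserves a tensor decomposition $V=X\otimes Y$ with $\dim X=2$, that $G_0$ is trapped between the central product $(C_{p-1}\wr C_2)\circ\GL(Y)$ (acting as monomial transformations on $X$) and its normaliser in $\GammaL(V)$, and that having exactly three orbits on vectors forces $p\in\{2,3\}$, yielding $\mathcal G(m)$ for $p=3$ and $\mathcal H(m)$ for $p=2$ (worked over $\GF(4)$). Finally one verifies, for these families with $m\geqslant 2$, that the three orbital graphs are: (i) a Hamming graph $H(2,p^m)$ arising from one direct-sum decomposition of $X$, with automorphism group $S_{p^m}\wr S_2$; (ii) a second graph whose automorphism group is again visibly strictly larger than $G$ — a Hamming graph from a second decomposition of $X$ when $p=3$, a Latin-square (net) graph of the elementary abelian group of order $4^m$ when $p=2$; and (iii) the orbital graph on the rank-$\leqslant 1$ tensors, whose automorphism group contains the classical tensor rank-$3$ affine group $V\rtimes(\GL_2(q)\circ\GL_m(q))$ and is therefore also strictly larger. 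Moreover the intersection of the automorphism groups of any two of these three graphs is exactly $G$, which simultaneously shows that $G$ is $2$-closed and that $G$ is an exception.

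The main obstacle is the affine analysis for $n\geqslant 4096$. Because the finite primitive affine groups of rank $4$ are not classified, one cannot simply quote a list: the class-by-class passage through Aschbacher's theorem must genuinely control the orbit structure on vectors in every class, and the tensor case itself requires care to prove that the small factor $X$ is two-dimensional and $q\in\{2,3\}$, and to pin down $G_0$ up to the precise central-product and extension shapes in parts~(1) and~(2). The final verification — identifying the three orbital graphs of $\mathcal G(m)$ and $\mathcal H(m)$ precisely enough to see that each $A_i$ is a proper overgroup while their pairwise intersections collapse back to $G$, i.e.\ that $G$ fails to be an automorphism group yet is nonetheless $2$-closed — is the technical heart of the argument.
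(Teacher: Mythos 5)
Your overall skeleton matches the paper's: the reduction to the three orbital digraphs (and the observation that $\Aut(\Gamma_i)\cap\Aut(\Gamma_j)=G^{(2)}=G$, so that $G$ is an exception precisely when every $\Aut(\Gamma_i)$ is a proper, primitive, rank-$3$ overgroup), the \textsc{Magma} sweep for $n<4096$, the elimination of the non-affine types, and the emergence of $\mathcal G(m)$ and $\mathcal H(m)$ from a tensor decomposition $X\otimes Y$ with $\dim X=2$ are all present in the paper. Two of your choices diverge from it in substance. For the almost simple case you propose to enumerate the rank-$4$ groups from the rank-$\leqslant 5$ classification; the paper instead argues structurally: if two orbital graphs have automorphism groups with the same socle as $G$ then $\soc(G)$ is $\tfrac{3}{2}$-transitive (classified), and otherwise $(G,\Aut(\Gamma_i))$ is a pair of primitive groups sharing a suborbit, which is handled by the Liebeck--Praeger--Saxl classification of such pairs. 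Your route is viable in principle but is doing by brute force what the paper gets from two targeted classification theorems.

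The genuine gap is in your affine analysis. Organising the argument by Aschbacher classes of $G_0$ is not executable as described: controlling the orbit structure of $G_0$ on $V\setminus\{0\}$ class by class (in particular in the classes $\mathcal C_6$, $\mathcal C_9$/$\mathcal S$) amounts to classifying affine rank-$4$ groups, which is exactly what is unavailable. The paper never classifies $G_0$ at all. Its engine is a dichotomy on the rank-$3$ overgroups: either all three $\Aut(\Gamma_i)$ are affine with socle $V$, in which case the three subdegrees $|B_1|,|B_2|,|B_3|$ each appear in Liebeck's subdegree tables and must sum to $p^d-1$, and a short Diophantine analysis of the forms $(p^m\pm1)(s\mp1)$ forces some $\Aut(\Gamma_i)_0\leqslant\GammaL_1(p^d)$, contradicting $G\not\leqslant\AGammaL_1(p^d)$; or some $\Aut(\Gamma_i)$ is non-affine, whence by the common-suborbit theorem it is $\Sym(V_1)\wr S_2$ and $G_0$ stabilises $V=V_1\oplus V_2$ with $(V_1\cup V_2)\setminus\{0\}$ a suborbit, after which the same subdegree arithmetic (now using that $p^m-1$ divides every subdegree) pins the remaining two overgroups to the tensor rows (A3)--(A5) and identifies $B_2$, $B_3$ as sets of simple tensors, forcing $q\in\{2,3\}$ and $G_0$ exactly. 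You gesture at this leverage but do not supply the subdegree arithmetic, which is the actual proof. Separately, your plan to verify $2$-closure of $\mathcal G(m)$, $\mathcal H(m)$ by computing all three $\Aut(\Gamma_i)$ and intersecting requires determining $\Aut(\Gamma_3)$ exactly, which the paper avoids by invoking the Praeger--Saxl theorem on closures of primitive groups to bound $G^{(2)}$ inside $V\rtimes M$ for $M$ the maximal tensor-decomposition stabiliser.
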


We note that examples do arise of groups $G\leqslant\AGammaL_1(p^d)$ that are not the automorphism group of a graph or digraph. We give an infinite family in Section \ref{sec:eg1diml} and further small examples in Section \ref{sec:smalleg}, but do not attempt to classify them. Thus, we provide three infinite families of primitive $2$-closed permutation groups that are not the automorphism group of any graph or digraph, and so a solution of Problem~A is given.

\section{Preliminaries}\label{prelim}

All groups in this paper are finite. All group actions and graph isomorphisms are written on the right, and basic group theoretic terminology may be found in \cite{WI}.

\subsection{Notation and definitions}
For two positive integers $a,b$, we denote by $\gcd(a,b)$ the greatest common divisor of $a$ and $b$.
For a positive integer $n$, we denote  by $C_{n}$ the cyclic group of order $n$,  by $\mathbb{Z}_{n}$ the ring of integers modulo $n$, by $\mathbb{Z}_{n}^{\ast}$ the multiplicative group of $\mathbb{Z}_{n}$ consisting of numbers coprime to $n$, by $D_{2n}$ the dihedral group of order $2n$, by $A_n$ the alternating group of degree $n$ and by $S_n$ the symmetric group of degree $n$. For two groups $M$ and $N$, $N\rtimes M$ denotes a semidirect product of $N$ by $M$. Given a group $G$, denote by $1$, $\Aut(G)$, $Z(G)$ and $\soc(G)$ the identity element, automorphism group, center and socle of $G$, respectively. For a subgroup $H$ of $G$, denote by $C_G(H), N_G(H)$ the centraliser and normaliser of $H$ in $G$, respectively. Of course $C_G(H)$ is normal in $N_G(H)$, and the quotient group $N_G(H)/C_G(H)$ is isomorphic to a subgroup of $\Aut(H)$.

For a graph or digraph $\G$, we denote by $\Aut(\G)$ the automorphism group of $\G$. When $\Aut(\G)$ acts transitively on ordered pairs of vertices of a graph at distance $i$ for each integer $i\geqslant 0$, we say that $\G$ is {\em distance-transitive}. 

Let $\G$ be a finite digraph. A sequence $v_0,v_1,\dots,v_m$ of vertices of $\G$ is called an {\em undirected path} of length $m$ from $v_0$ to $v_m$ if for each $i$, there is an edge in $\G$ from $v_i$ to $v_{i+1}$ or an edge from $v_{i+1}$ to $v_i$. We say that $\G$ is {\em connected} if for every pair of vertices $u$ and $v$ there is an undirected path from $u$ to $v$.

Let $\D=\{0,1,\ldots,k-1\}$ and let $d\geqslant 2$ be an integer. The {\em Hamming graph} $H(d,k)$ has vertex set $\D^d$, the set of ordered $d$-tuples of elements of $\D$, or sequences of length $d$ from $\D$. Two vertices are adjacent if they differ in precisely one coordinate. The Hamming graph has valency $d(k-1)$ and diameter $d$. It is distance-transitive, and $\Aut(H(d,k))=S_k\wr S_d$ is primitive on the vertex set of $H(d,k)$ if and only if $k\geqslant 3$ (see \cite[Section~9.2]{Brouwer-Book}).

Given a finite group $G$ and a subset $S\subseteq G\setminus\{1\}$, the {\em Cayley graph} $\Cay(G,S)$ of $G$ with respect to $S$ is the graph with vertex set $G$ and edge set $\{\{g,sg\}\mid g\in G,s\in S\}$. It is well known that a graph is isomorphic to a Cayley graph if and only if it has a group of  automorphisms acting regularly on its vertex set (see \cite[Lemma~16.3]{B}). 

\subsection{Primitive groups}
A permutation group $G$ on $\Ome$ is said to be {\em primitive} if $G$ is transitive on $\Ome$ and the only partitions of $\Ome$ preserved by $G$ are either the singleton subsets or the whole of $\Ome$. Let $G$ be a transitive permutation group on a set $\Ome$. An orbit $(u,v)^G$ of $G$ on $\Ome\times\Ome$ is called an {\it orbital} of $G$ on $\Ome$, and
$(v,u)^G$ is called the {\it paired orbital} of $(u,v)^G$.
An orbital  $(u,v)^G$ is called {\it self-paired} if it equals $(v,u)^G$ (equivalently if $(v,u) \in (u,v)^G$).
The orbital $(u,u)^G$ is called {\it trivial}, and the others are  non-trivial.
To an orbital $E$ we associate the digraph with vertex set $\Ome$ and arc set $E$,
called the {\em orbital digraph} for $E$, denoted by the pair $(\Omega, E)$. The orbital digraph for $E$ is a \emph{graph} if and only if $E$ is self-paired. Furthermore, we extend this notation to unions of orbitals, namely, $(\Omega, \cup E_i)$, which is called a generalised orbital digraph. Each digraph with vertex set $\Omega$ admitting $G$ is a generalised orbital digraph.
For a given point $u$, each orbital $E=(u,v)^G$ corresponds to an orbit of $G_u$ on $\Ome$, namely, $\{v \mid (u,v)\in E\}$, and we say that this orbit of $G_u$ is {\it self-paired} if the corresponding orbital  is self-paired.
The orbits of $G_u$ on $\Ome$ are called {\em suborbits} of $G$,
and their sizes are called the {\em subdegrees} of $G$. Hence, the number $r$ of   orbits of $G_u$ on $\Ome$ is equal to the number of  orbits of $G$ on $\Ome\times\Ome$, the rank of $G$.

The following result is due to D.G. Higman.

\begin{prop}{\rm\cite[Proposition~4.4]{Sims1967} or \cite[Theorem~3.2A]{DM}}\label{primitive-orbital}
A finite transitive permutation group $G$ on $\Ome$ is primitive if and only if every non-trivial orbital digraph of $G$ is connected.
\end{prop}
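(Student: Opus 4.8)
The plan is to prove the two implications separately, the first being the substantive one.

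For the forward direction I would assume that $G$ is primitive and let $(\Ome,E)$ be a non-trivial orbital digraph, so that $E=(u,v)^G$ for some $v\neq u$. Since $E$ is a single $G$-orbit on $\Ome\times\Ome$, the group $G$ acts on $(\Ome,E)$ as a group of digraph automorphisms, and hence it permutes among themselves the connected components of the underlying graph of $(\Ome,E)$. Therefore ``lying in a common connected component of $(\Ome,E)$'' is a $G$-invariant equivalence relation, and its classes form a $G$-invariant partition $\mathcal P$ of $\Ome$. The key observation is that the class of $u$ contains $v$, because $(u,v)\in E$ is an arc and $v\neq u$; thus $\mathcal P$ is not the partition of $\Ome$ into singletons. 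Primitivity of $G$ then forces $\mathcal P=\{\Ome\}$, i.e. $(\Ome,E)$ is connected. If one wants ``connected'' in the strong sense, I would then invoke the fact that an orbital digraph is vertex-transitive, so that its in-valency and out-valency agree at every vertex; a short counting argument applied to the set of vertices reachable from a fixed vertex by directed paths then shows that a weakly connected digraph with this property is automatically strongly connected.

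For the converse I would argue the contrapositive. Suppose $G$ is imprimitive, and fix a $G$-invariant partition $\mathcal B$ of $\Ome$ into blocks of common size $s$ with $1<s<|\Ome|$. Choose distinct points $u,v$ in a common block $B\in\mathcal B$; then $E=(u,v)^G$ is a non-trivial orbital. For any arc $(\alpha,\beta)\in E$ there is $g\in G$ with $(\alpha,\beta)=(u,v)^g=(u^g,v^g)$, so $\alpha$ and $\beta$ both lie in the block $B^g\in\mathcal B$. Hence every arc of $(\Ome,E)$ has both ends in a single block of $\mathcal B$, and since $|\mathcal B|\geqslant 2$ the digraph $(\Ome,E)$ is disconnected, which gives the contrapositive.

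The argument is essentially formal and I do not expect a genuine obstacle; the only points that need care are, first, recording precisely why the connected-component relation is a bona fide $G$-congruence and why the non-triviality of the orbital (that is, $v\neq u$) is exactly what rules out the all-singletons partition in the primitive case, and second, the small amount of bookkeeping needed to reconcile the directed and undirected notions of connectivity for an orbital digraph.
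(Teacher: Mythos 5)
The paper does not prove this proposition; it is quoted directly from Sims' paper (it is Higman's classical criterion), so there is no in-paper argument to compare against. Your proof is correct and is the standard one: the connected components of an orbital digraph form a $G$-invariant partition, non-triviality of the orbital ensures this partition is not the partition into singletons, and conversely a non-trivial block yields a disconnected orbital digraph. Your handling of the one genuinely delicate point --- reconciling weak and strong connectivity via the equality of in- and out-valencies in a finite vertex-transitive digraph --- is also correct, so there is nothing to fix.
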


We will also need the following useful lemma.

\begin{lem}\label{lem:socles}
Let $G<H$ be finite primitive permutation groups with $\soc(G)\neq\soc(H)$. Let $X=\soc(H)G\leqslant H$. Then $\soc(X)=\soc(H)$ and $G<X$.
\end{lem}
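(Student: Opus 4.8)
The plan is to prove $\soc(X)=\soc(H)$; the strict containment $G<X$ then follows for free, because $\soc(G)\neq\soc(H)=\soc(X)$ forces $G\neq X$, while $G\leqslant X$ by construction. I would work inside $H$, using three standard facts about permutation groups: a nontrivial normal subgroup of a primitive group is transitive; the centraliser in $\Sym(\Ome)$ of a transitive subgroup is semiregular, so a transitive such centraliser is in fact regular; and, for a nonabelian simple group $T$, the normal subgroups of a direct power $T^k$ are exactly the sub-products of the direct factors.

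For the setup, $G<H$ act on a common set $\Ome$, and since $\soc(H)\trianglelefteq H$ the set $X=\soc(H)G$ is a subgroup of $H$. It contains the primitive (hence transitive) group $G$, so $X$ is transitive; and every block system for $X$ is a block system for $G$, so $X$ is primitive. Clearly $\soc(H)\trianglelefteq X$. I would also record two facts: the socle of a primitive group is either elementary abelian and regular or else nonabelian with trivial centre (an abelian minimal normal subgroup of a primitive group is regular and is the unique minimal normal subgroup, so the socle cannot mix the two kinds); and if $\soc(H)$ is nonabelian and $\soc(H)\trianglelefteq M$ for any finite group $M$ then $C_M(\soc(H))=1$, since a nontrivial normal subgroup of $M$ centralising $\soc(H)$ would contain a minimal normal subgroup of $M$, which by definition of the socle lies in $\soc(H)$, centralises it, and hence lies in $Z(\soc(H))=1$.

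To prove $\soc(X)=\soc(H)$, first suppose $\soc(H)$ is abelian; then it is regular, so $X$ is a primitive group with a regular abelian normal subgroup, for which the socle is that subgroup, and $\soc(X)=\soc(H)$. Now suppose $\soc(H)=T_1\times\cdots\times T_k$ with $T$ nonabelian simple. For the inclusion $\soc(X)\leqslant\soc(H)$: if a minimal normal subgroup $N$ of $X$ were not contained in $\soc(H)$, then $N\cap\soc(H)=1$, hence $[N,\soc(H)]=1$ and $N\leqslant C_X(\soc(H))\leqslant C_H(\soc(H))=1$, which is absurd. For the reverse inclusion, let $R$ be the product of those minimal normal subgroups of $X$ that lie in $\soc(H)$; then $1\neq R\trianglelefteq X$ and $R\leqslant\soc(H)$, so $R=\prod_{i\in I}T_i$ for some set $I$ of factors that is invariant under the conjugation action of $X$ on $\{T_1,\dots,T_k\}$. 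If some $X$-orbit $\Delta$ of factors were disjoint from $I$, then $\prod_{i\in\Delta}T_i$ would be a nontrivial normal subgroup of $X$ contained in $\soc(H)$, hence would contain a minimal normal subgroup of $X$ lying in $\soc(H)$ but not in $R$ — a contradiction. Thus $I$ consists of all factors, $R=\soc(H)$, and $\soc(X)=\soc(H)$.

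The step requiring the most care, and the real content of the proof, is the nonabelian case above: one has to rule out the socle \emph{growing}, i.e.\ that a minimal normal subgroup of $X$ escapes $\soc(H)$. The identity $C_H(\soc(H))=1$ is exactly the tool that forbids this, and it hinges on a primitive group with nonabelian socle having centreless socle, so that no competing regular normal subgroup can appear alongside $\soc(H)$. (Incidentally, the same reasoning applied with $G$ in place of $X$ shows directly that $\soc(H)\not\leqslant G$, which is another way to see $G<X$.)
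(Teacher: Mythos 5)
Your argument is correct and takes essentially the same route as the paper's proof: the abelian case via the uniqueness and self-centralising property of an abelian minimal normal subgroup of a primitive group, and the nonabelian case via the two inclusions coming from $C_H(\soc(H))\leqslant\soc(H)$ and from the action of $X$ on the simple direct factors of $\soc(H)$ (the paper packages the latter as normal closures of the factors under $G$, you as an $X$-invariant index set meeting every orbit, which is the same idea). One caveat: your auxiliary claim that $C_M(\soc(H))=1$ for \emph{any} finite group $M$ with $\soc(H)\trianglelefteq M$ nonabelian is false as stated (take $M=\soc(H)\times C_2$; the flaw is that a minimal normal subgroup of $M$ lies in $\soc(M)$, not necessarily in $\soc(H)$), but the only instance you actually invoke is $M=H$, where $\soc(M)=\soc(H)$ and the argument is sound.
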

\begin{proof}
First note that since $G$ is primitive, so is $X$. Let $N$ be a minimal normal subgroup of $H$. Suppose first that $N$ is abelian. Then $N=\soc(H)\norml X$. Let $M$ be a minimal normal subgroup of $X$ that is contained in $N$. Since $N$ is abelian, so is $M$ and so $M$ is the unique minimal normal subgroup of $X$ \cite[Theorem 4.3B]{DM}. Thus $M=C_X(M)$ and so $N=M= \soc(X)$. Next suppose that $N$ is nonabelian. Then $N=T_1\times T_2\times \cdots \times T_k$ where the $T_i$ are pairwise isomorphic simple groups.  Now each $T_i$ is a minimal normal subgroup of $N$ and $\soc(H)$. Moreover, $H$ and hence $G$, permutes the $T_i$. Thus for each $i$, the normal closure of $T_i$ in $G$ is a minimal normal subgroup of $X$ consisting of a direct product of the simple direct factors of $N$. Hence $N$ is the product of minimal normal subgroups of $X$ and so $N\leqslant \soc(X)$. Thus $\soc(H)\leqslant \soc(X)$. If $\soc(H)<\soc(X)$ then there exists a minimal normal subgroup $L$ of $X$ such that $L\cap \soc(H)=1$. Thus $[L,\soc(H)]=1$, contradicting the fact that $C_H(\soc(H))\leqslant \soc(H)$ \cite[Theorem 4.3B]{DM}. Hence $\soc(H)=\soc(X)$.

Finally, if $G=X$ then $\soc(G)=\soc(X)=\soc(H)$, a contradiction.
\end{proof}

\subsection{The subdegrees of the affine rank $3$ groups}

One of the main approaches in this paper is to use the subdegrees of the affine rank $3$ groups to
calculate the subdegrees of an affine primitive $2$-closed permutation group $G$ of rank $4$.
The following result of Liebeck on subdegrees of affine rank $3$ groups is instrumental in our investigation.

\begin{theorem}{\rm\cite[Appendix~2]{Liebeck-affine}}\label{subdegree}
Let $G$ be a finite primitive affine permutation group of rank $3$ and of degree $p^d$, with socle $V$, where $V\cong\mz_p^d$ for some prime $p$, and let $G_0$ be the stabiliser of the zero vector in $V$. Then the subdegrees of  $G$ are listed in Tables {\rm\ref{tab-sub-1}--\ref{tab-sub-3}}.
\end{theorem}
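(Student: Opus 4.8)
This statement is a verbatim quotation of Liebeck's classification of the affine rank~$3$ groups together with his tabulation of their subdegrees, so in the paper nothing is reproved and the tables are simply copied. For orientation I describe the route one would follow to establish it from scratch. Write $G=V\rtimes G_0$ with $V\cong\mz_p^d$ and $G_0\leqslant\GL(V)$; the hypothesis $\rank(G)=3$ is equivalent to saying that $G_0$ has \emph{exactly two} orbits on the set $V\setminus\{0\}$ of nonzero vectors. The plan is in two stages: first classify all such pairs $(V,G_0)$, and then, for each, read off the lengths of the two orbits, which are the required subdegrees.

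For the classification one invokes the standard structural reduction for linear groups coming from Aschbacher's theorem (and hence, ultimately, from the classification of finite simple groups): after passing to a suitable overgroup, either $G_0$ lies inside a member of one of the geometric families $\mathcal{C}_1,\dots,\mathcal{C}_8$ of subgroups of $\GL(V)$, or $G_0$ normalises a quasisimple group acting absolutely irreducibly on $V$ (the class $\mathcal{S}$). One then works through the families. The reducible and imprimitive cases ($\mathcal{C}_1,\mathcal{C}_2$) force $V$ to break into at most two $G_0$-related summands and give the Hamming-type and ``bilinear/alternating forms'' examples; the field-extension, tensor-product and tensor-induced cases ($\mathcal{C}_3,\mathcal{C}_4,\mathcal{C}_7$) reduce to the analysis of a smaller module, the $\mathcal{C}_4$ case producing precisely the groups preserving a decomposition $V=X\otimes Y$ of the type appearing in Theorem~\ref{main-theorem}; the symplectic-type-normaliser case ($\mathcal{C}_6$) produces the sporadic small-degree examples; and the classical family ($\mathcal{C}_8$) yields the groups containing a classical group (with enough scalars) on its natural module, such as those containing $\SU_m(q)$ or $\Omega^{\pm}_m(q)$, where over the appropriate field the two orbits are the isotropic and the anisotropic nonzero vectors. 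For the class-$\mathcal{S}$ case one bounds the possibilities by combining lower bounds for the dimensions of ordinary and cross-characteristic representations of quasisimple groups with the strong constraint that the permutation character $1_{G_0}^{G}$ has only three irreducible constituents; this leaves a short explicit list.

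Once the groups are in hand, the subdegrees are computed uniformly. In every case $G_0$ preserves an obvious invariant relation on $V\setminus\{0\}$ (``agree in all but one coordinate'', ``have the same $\Omega$-type'', ``are proportional'', ``have rank $1$ versus higher rank as a matrix/form'', and so on), so the two subdegrees are the cardinalities of two explicitly described sets of vectors; these are obtained by the usual enumeration of vectors and subspaces of prescribed type over $\GF(q)$. Collecting the resulting formulas produces Tables~\ref{tab-sub-1}--\ref{tab-sub-3}.

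The principal obstacle is the class-$\mathcal{S}$ analysis: showing that only finitely many almost simple configurations survive rests on delicate representation-theoretic lower bounds (especially in cross characteristic) and is by far the heaviest part of the argument. A secondary, more bookkeeping, difficulty is to determine, within each geometric family, exactly which combinations of field size and dimension make the number of $G_0$-orbits on $V\setminus\{0\}$ drop to precisely two rather than remaining larger.
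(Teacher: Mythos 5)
The paper offers no proof of this statement: it is a verbatim citation of Liebeck's classification, with the subdegrees transcribed from his Appendix~2, and you correctly identify this. Your sketch of how the underlying argument runs is consistent with the actual structure of Liebeck's proof (the geometric / extraspecial-normaliser / almost-simple trichotomy visible as classes (A), (B), (C) in Tables~\ref{tab-sub-1}--\ref{tab-sub-3}, with subdegrees read off from explicit vector enumerations), so there is nothing to fault.
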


\begin{table}
\begin{center}
\caption{$G$ in class (A) of \cite[Theorem]{Liebeck-affine}.} \label{tab-sub-1}
\newcommand{\tabincell}[2]{\begin{tabular}{@{}#1@{}}#2\end{tabular}}
\begin{tabular}{|l|l|c|c|}
\hline Type of $G$ & $p^d$ &  subdegrees   \\
\hline
(A1): $G_0\leq\GammaL_1(p^d)$ & $p^d$ & \tabincell{l}{$\frac{p^d-1}{v},  \frac{(v-1)(p^d-1)}{v}$, where $v$ is a prime} \\
\hline
(A2): $G_0$ imprimitive & $p^{2m}$ & $2(p^m-1), (p^m-1)^2$ \\
\hline
\tabincell{l}{(A3)--(A5):\\
$G_0$ preserves $X\otimes Y$,\\
$\dim(X)=2$, \\
$\dim(Y)=m\geq2$} & $q^{2m}$ & $(q+1)(q^m-1),  q(q^m-1)(q^{m-1}-1)$ \\
\hline
(A6): ${\rm SU}_a(q)\unlhd G_0$& $q^{2a}$ & $\left\{
                                              \begin{array}{ll}
                                                (q^{a-1}+1)(q^a-1), q^{a-1}(q-1)(q^{a}-1), & a\ \hbox{even} \\
                                                (q^{a-1}-1)(q^a+1), q^{a-1}(q-1)(q^{a}+1), & a>1\ \hbox{odd}
                                              \end{array}
                                            \right.$\\
\hline
(A7): $\Omega_{2a}^\varepsilon(q)\unlhd G_0$& $q^{2a}$ & $\left\{
                                              \begin{array}{ll}
                                                (q^{a-1}+1)(q^a-1),  q^{a-1}(q-1)(q^{a}-1), & \varepsilon=+ \\
                                                (q^{a-1}-1)(q^a+1),  q^{a-1}(q-1)(q^{a}+1), & \varepsilon=-
                                              \end{array}
                                            \right.$\\
\hline
(A8): $\SL_5(q)\unlhd G_0$& $q^{10}$ & $(q^5-1)(q^2+1),  q^2(q^5-1)(q^{3}-1)$\\
\hline
(A9): $B_3(q)\unlhd G_0$& $q^{8}$ & $(q^4-1)(q^3+1),  q^3(q^4-1)(q-1)$\\
\hline
(A10): $D_5(q)\unlhd G_0$& $q^{16}$ & $(q^8-1)(q^3+1),  q^3(q^8-1)(q^5-1)$\\
\hline
(A11): ${\rm Sz}(q)\unlhd G_0$& $q^{4}$ & $(q^2+1)(q-1),   q(q^2+1)(q-1)$\\
\hline
\end{tabular}
\end{center}
\end{table}

\begin{table}
\begin{center}
\caption{$G$ in class (B) of \cite[Theorem]{Liebeck-affine}.}\label{tab-sub-2}

\begin{tabular}{|l|c|c|c||l|c|c|c|}
\hline $r$ & $p^d$ &  $R$  & subdegrees & $r$ & $p^d$ &  $R$  & subdegrees\\
\hline
3 & $2^6$ & $3^{1+2}$ & $27, \quad 36$ &2 & $31^{2}$ & $R^{1}_1$ or $R_2^1$ & $240, \quad 720$\\
\hline
2 & $3^{4}$ & $R^{1}_1$ or $R_2^1$ & $32, \quad 48$&2 & $47^{2}$ & $R^{1}_1$ or $R_2^1$ & $1104, \quad 1104$\\
\hline
2 & $7^{2}$ & $R^{1}_1$ or $R_2^1$ & $24, \quad 24$ &2 & $3^{4}$ & $R^{1}_2$ & $32, \quad 48$ \\
\hline
2 & $13^{2}$ & $R^{1}_1$ or $R_2^1$ & $72, \quad 96$ & 2 & $3^{4}$ & $R_2^2$ & $16a, \quad  16b \quad (a+b=5)$\\
\hline
2 & $19^{2}$ & $R^{1}_1$ or $R_2^1$ & $96, \quad 192$ & 2 & $5^{4}$ & $R_2^2$ & $240, \quad 384$\\
\hline
2 & $23^{2}$ & $R^{1}_1$ or $R_2^1$ & $144, \quad 216$ & 2 & $5^{4}$ & $R^{2}_3$ & $240, \quad 384$\\
\hline
2 & $3^{6}$ & $R^{1}_1$ or $R_2^1$ & $264, \quad 264$ &2 & $7^{4}$ & $R_2^2$ & $480, \quad 1920$ \\
\hline
2 & $29^{2}$ & $R^{1}_1$ or $R_2^1$ & $168, \quad 672$ &2 & $3^{8}$ & $R^{2}_3$ & $1440, \quad 5120$ \\
\hline

\end{tabular}
\end{center}
\end{table}

\begin{table}
\begin{center}
\caption{$G$ in class (C) of \cite[Theorem]{Liebeck-affine}.}\label{tab-sub-3}

\begin{tabular}{|l|c|c||l|c|c||l|c|c|}
\hline $L$ & $p^d$ &   subdegrees & $L$ & $p^d$ &   subdegrees&$L$ & $p^d$ &   subdegrees\\
\hline
$A_5$ & $3^4$ & $40,40$ & $A_6$ & $5^4$ & $144,480$  & $M_{11}$ & $3^5$ &  $22, 220$ or $110, 132$\\
\hline
$A_5$ & $31^2$ & $360,600$ & $A_7$ & $2^8$ & $45,210$ & $M_{24}$ & $2^{11}$ &  $276, 1771$ or $759, 1288$\\
\hline
$A_5$ & $41^2$ & $480,1200$ & $A_7$ & $7^4$ &  $720, 1680$&Suz & $3^{12}$ & $65520,465920$ \\
\hline
$A_5$ & $7^4$ & $960,1440$ & $A_9$ & $2^8$ &  $120, 135$& $G_2(4)$ & $3^{12}$ & $65520,465920$\\
\hline
$A_5$ & $71^2$ & $840,4200$ & $A_{10}$ & $2^8$ &  $45, 210$ &$J_2$ & $2^{12}$ & $1575,2520$\\
\hline
$A_5$ & $79^2$ & $1560,4680$ & $L_2(17)$ & $2^8$ &  $102, 153$&$J_2$ & $5^{6}$ & $7560,8064$\\
\hline
$A_5$ & $89^2$ & $2640,5280$ & $L_3(4)$ & $3^6$ &  $224, 504$&&&\\
\hline
$A_6$ & $2^6$ & $18,45$ & $U_4(2)$ & $7^4$ &  $240, 2160$&&&\\
\hline

\end{tabular}
\end{center}
\end{table}

\begin{note}
{\rm\begin{enumerate}
\item [{\rm (1)}] As shown in  \cite[Sections 11 and 12]{BDFP}, Liebeck's classes {\rm (A4)} and {\rm (A5)}  (classes {\rm (S1)} and {\rm (S0)} in {\rm \cite[Theorem 3.1]{BDFP}}) preserve a tensor decomposition $V=X\otimes Y$ with $\dim(X)=2$ so we group these classes together with {\rm (A3)} in Table \ref{tab-sub-1}.
\item [{\rm (2)}]  Table~\ref{tab-sub-2} lists the `Extraspecial class':
Here $G_0\leqslant N_{\GL_d(p)}(R)$, where $R$ is an $r$-group, irreducible on $V$. Either $r=2$ and $R\cong 3^{1+2}$ (extraspecial of order $27$), or $r=2$ and $|R/Z(R)|=2^{2m}$ with $m=1$ or $2$. If $r=2$, then either $|Z(R)|=2$ and $R$ is one of the two extraspecial groups $R_1^m$, $R_2^m$ of order $2^{1+2m}$, or $|Z(R)|=4$ and
we write $R=R^m_3$.
\item [{\rm (3)}] Table~\ref{tab-sub-2} lists the `Exceptional class'. Here $L$ is the simple socle of $G_0/(G_0\cap Z)$ where $Z=Z(\GL(a,q))$ with $q^a=p^d$. See \cite[Remark 3.4(1)]{BDFP}.
\item [{\rm (4)}] The subdegrees for the row corresponding to $L=A_9$ in Table~\ref{tab-sub-3} are recorded as $105$ and $150$ in  \cite[Table~14]{Liebeck-affine}; the correct ones should be $120$ and $135$ as given by \cite[Remark 3.4(3)]{BDFP}.
\end{enumerate}}
\end{note}

We will also need the following useful lemma.

\begin{lem}\label{lem:larger}
Let $G$ be one of the groups in Table~\ref{tab-sub-1}. Suppose further that $p^d\geqslant 4096$ and that $G$ belongs to types {\rm(A3)--(A11)}. Let $m_1$ and $m_2$ be the non-trivial subdegrees of $G$. Then exactly one of $m_1$ and $m_2$ is divisible by $p$. Moreover, if $m_2$ is the subdegree divisible by $p$ then either:
\begin{enumerate}
\item [{\rm (1)}] $m_1<m_2$, or
\item [{\rm (2)}] $q=2$, $G$ has type {\rm(A6)} or {\rm(A7)},  and $m_1=(2^{a-1}+1)(2^{a}-1)>m_2=2^{a-1}(2^a-1)$.
\end{enumerate}
\end{lem}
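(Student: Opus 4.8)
The strategy is to go through the types (A3)--(A11) of Table~\ref{tab-sub-1} one at a time, read off the two non-trivial subdegrees as explicit polynomials in $q$ (and the relevant exponent parameter such as $m$ or $a$), determine the $p$-part of each, and then compare their sizes. Recall $p^d = q^e$ for the appropriate $e$ and $p \mid q$, so ``divisible by $p$'' is the same as ``divisible by $q$'' for these subdegrees since each subdegree is a product of factors of the form $q^i \pm 1$ possibly multiplied by a power of $q$ and by $(q-1)$; a factor $q^i - 1$ or $q^i + 1$ is always coprime to $q$, hence coprime to $p$. So the first step in each case is simply to observe that one subdegree is a product of factors coprime to $q$ (hence to $p$) while the other carries an explicit factor $q^j$ with $j \geqslant 1$; this immediately gives the ``exactly one is divisible by $p$'' assertion and identifies $m_2$ as the one with the visible power of $q$.

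Next, for each type I would write $m_1$ and $m_2$ and compare. For types (A3)--(A5) we have $m_1 = (q+1)(q^m-1)$ and $m_2 = q(q^m-1)(q^{m-1}-1)$ with $m > 1$, so $m_2/m_1 = q(q^{m-1}-1)/(q+1)$; since $m \geqslant 2$ and $q \geqslant 2$ one checks $q(q^{m-1}-1) \geqslant q(q-1) \geqslant q+1$ unless $q = 2, m = 2$, and that small case has $p^d = q^{2m} = 16 < 4096$, so it is excluded by hypothesis; hence $m_1 < m_2$. For types (A8), (A9), (A10) the subdegrees are explicit single instances (with $a$ or the Lie rank fixed), e.g. (A8): $(q^5-1)(q^2+1)$ versus $q^2(q^5-1)(q^3-1)$, and the ratio is $q^2(q^3-1)/(q^2+1) > 1$ for all $q \geqslant 2$; similarly for (A9), (A10), (A11) (note (A11) has ${\rm Sz}(q)$, so $q$ is an even power of $2$ with $q \geqslant 8$, comfortably making $m_1 < m_2$, and in any case $q^4 \geqslant 4096$ forces $q \geqslant 8$). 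The only genuinely delicate types are (A6) and (A7): here $m_1 = (q^{a-1}+1)(q^a-1)$, $m_2 = q^{a-1}(q-1)(q^a-1)$ in the ``even'' / $\varepsilon = +$ subcase (and with $+1 \leftrightarrow -1$ swapped in the ``odd'' / $\varepsilon = -$ subcase). Then $m_2/m_1 = q^{a-1}(q-1)/(q^{a-1}\pm 1)$, which exceeds $1$ precisely when $q^{a-1}(q-2) > \pm 1$, i.e. always when $q \geqslant 3$, and when $q = 2$ it gives $m_2/m_1 = 2^{a-1}/(2^{a-1}+1) < 1$ in the $+$/even case and $2^{a-1}/(2^{a-1}-1) > 1$ in the $-$/odd case. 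So the single exceptional family is exactly $q = 2$ in types (A6)/(A7) with the $+1$ in $m_1$, i.e. $m_1 = (2^{a-1}+1)(2^a-1) > m_2 = 2^{a-1}(2^a-1)$ — which is conclusion~(2). One should also double-check that for $q = 2$, $\varepsilon = -$ (resp. $a$ odd), namely $m_1 = (2^{a-1}-1)(2^a+1)$, $m_2 = 2^{a-1}(2^a+1)$, we do get $m_1 < m_2$, consistent with conclusion~(1).

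The main obstacle — really the only place requiring care — is the bookkeeping in types (A6) and (A7): one must keep the two parity subcases straight, note that the roles of ``$q^{a-1}+1$'' and ``$q^{a-1}-1$'' as a factor of $m_1$ versus $m_2$ switch between them, and verify that the $q = 2$ anomaly occurs in exactly one of the two subcases. It is also worth recording explicitly that the $p^d \geqslant 4096$ hypothesis is used only to kill the sporadic small case $q = 2, m = 2$ in types (A3)--(A5) (where $p^d = 16$) and, for uniformity, to guarantee $a \geqslant 2$ genuinely contributes a positive power of $q$; for (A6)/(A7) with $q = 2$ the bound $2^{2a} \geqslant 4096$ forces $a \geqslant 6$, so the exceptional family in conclusion~(2) is non-empty and the stated strict inequality $m_1 > m_2$ is easily confirmed there since $2^{a-1}+1 > 2^{a-1}$. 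Assembling these case checks completes the proof.
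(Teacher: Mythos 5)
Your proposal follows essentially the same route as the paper: go through types (A3)--(A11) one at a time, observe that exactly one of the two listed subdegrees carries an explicit factor $q^j$ (the other being a product of terms $q^i\pm1$, hence coprime to $p$), and compare sizes by a ratio or difference computation, isolating the $q=2$ anomaly in types (A6)/(A7). The handling of (A6)/(A7), including the parity bookkeeping and the identification of the exceptional subcase, matches the paper's argument.

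There is, however, one concrete slip. For type (A9) you assert, via ``similarly'', that the ratio $m_2/m_1=q^3(q-1)/(q^3+1)$ exceeds $1$ for all $q\geqslant 2$; at $q=2$ this ratio is $8/9<1$ (indeed $m_1=9\cdot 15=135>m_2=8\cdot 15=120$), so as stated the step fails. The case is rescued only because $p^d=q^8\geqslant 4096$ forces $q\geqslant 3$, and the paper invokes the degree bound at exactly this point ($q^3(q-2)-1>0$). Consequently your closing remark that the hypothesis $p^d\geqslant 4096$ is used \emph{only} to kill $q=2,m=2$ in types (A3)--(A5) is not accurate: it is also essential in type (A9). (A further cosmetic point: in (A3)--(A5) the intermediate inequality $q(q-1)\geqslant q+1$ in your chain is false for $q=2$ even when $m\geqslant 3$; the end-to-end inequality $q(q^{m-1}-1)>q+1$ is what holds there, again using the degree bound to exclude small $m$.) With these repairs the argument is complete and agrees with the paper's.
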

\begin{proof} We work through each of the types. Suppose first that  $G$ has type (A3)--(A5).  Then  $m_1=(q+1)(q^m-1)$ and $m_2=q(q^{m-1}-1)(q^m-1)$ with $m>1$.   Thus $q(q^{m-1}-1)-(q+1)=q^m-2q-1=q(q^{m-1}-2)-1>0$ since $q^{2m}\geqslant 4096$. Hence $m_1<m_2$ in this case.

Next suppose that $G$ has type (A8).  Then  $m_1=(q^2+1)(q^5-1)$ and $m_2=q^2(q^{3}-1)(q^5-1)$.   Thus $q^2(q^{3}-1)-(q^2+1)=q^5-2q^2-1=q^2(q^{3}-2)-1>0$, and so we again have that $m_1<m_2$.

If $G$ has type (A9) then $p^d=q^{10}$. Moreover,  $m_1=(q^3+1)(q^4-1)$ and $m_2=q^3(q-1)(q^4-1)$.  Then $q^3(q-1)-(q^3+1)=q^4-2q^3-1=q^3(q-2)-1>0$ since $q^{10}\geqslant 4096$. Hence $m_1<m_2$.

Next suppose that $G$ has type (A10). Then  $m_1=(q^3+1)(q^8-1)$ and $m_2=q^3(q^5-1)(q^8-1)$. Thus $q^3(q^5-1)-(q^3+1)=q^8-2q^3-1=q^3(q^5-2)-1>0$ and so we again obtain that $m_1<m_2$.

If $G$ belongs to type (A11), then we have $m_1=(q^2+1)(q-1)$ and $m_2=q(q-1)(q^2+1)$.   Thus $q(q-1)>(q-1)$ and so $m_1<m_2$.

Finally, suppose that $G$ has  type (A6) or (A7). Then $p^d=q^{2a}$ with $a>1$. If $m_1=(q^{a-1}-1)(q^a+1)$ and $m_2=q^{a-1}(q-1)(q^a+1)$, then $m_2-m_1=(q^{a-1}(q-2)+1)(q^a-1)>0$. Thus we again deduce that $m_2>m_1$. On the other hand, if $m_1=(q^{a-1}+1)(q^a-1)$ and $m_2=q^{a-1}(q-1)(q^a-1)$, then $m_2-m_1=(q^{a-1}(q-2)-1)(q^a-1)$. Thus for $q>2$ we obtain the usual conclusion that $m_2>m_1$. However, for $q=2$ we have  $m_2<m_1$ and we obtain the exceptional case in the statement of the lemma.
\end{proof}

\subsection{ Rank 4 groups}

We begin with the following proposition.
\begin{prop}
\label{prn:rank4}
Let $G$ be a primitive permutation group on $\Omega$ of rank $4$. Then one of the following holds:
\begin{enumerate}[{\rm (1)}]
  \item $G$ is affine, that is, $\soc(G)$ is abelian.

  \item  $G$ is almost simple, that is, $\soc(G)$ is nonabelian simple.

  \item $\PSL_2(8)^2 \lhd G\leqslant G_0\wr S_2$, and $\Ome=\Delta^2$,  where $|\Delta|=28$, $\PSL_2(8)=\soc(G_0)$ and  $G$ acts transitively on the simple direct factors of $\PSL_2(8)^2$.

  \item  $T^3\lhd G\leqslant G_0\wr S_3$, and $\Ome=\Delta^3$,
  where $G_0$ is a $2$-transitive group on $\Delta$,  and $G$ acts transitively on the simple direct factors of $T^3$.
  

  \item  $\soc(G)=T\times T$ with $T=\A_5$,
 and  the point-stabiliser $\soc(G)_\a$ is the diagonal subgroup of $T\times T$.
 \end{enumerate}
\end{prop}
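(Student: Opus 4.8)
The plan is to invoke the O'Nan--Scott theorem for primitive groups and eliminate, one by one, the types that cannot support rank exactly $4$. Recall that a primitive group of rank $r$ on $\Omega$ has a point stabiliser $G_\a$ with exactly $r$ orbits on $\Omega$; in particular for $r=4$ there are three non-trivial suborbits. So I would first set up the O'Nan--Scott dichotomy: either $\soc(G)$ is abelian (the affine case, giving (1)), or $\soc(G)$ is non-abelian, in which case $\soc(G)=T^k$ for a non-abelian simple group $T$ and $G$ is of one of the types HA (excluded now), AS (giving (2)), HS/HC (holomorph/twisted wreath), SD/CD (simple/compound diagonal), or PA (product action). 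The bulk of the argument is ruling out or pinning down each remaining type using the rank constraint.

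For the diagonal types, I would use the standard fact that if $G$ has a simple diagonal action with socle $T^k$ and point stabiliser containing the full diagonal subgroup, then the subdegrees are controlled by the conjugacy structure of $T$, and the rank grows with $k$ and with $|T|$. For $k=2$ the suborbits of $T\times T$ acting on $T$ (by $x\mapsto g^{-1}xh$) correspond to conjugacy classes of $T$, so the rank of $T\times T$ itself is the number of conjugacy classes of $T$; to get overall rank $4$ after adjoining the $S_2$ that swaps the factors one needs $T$ to have very few classes, and $A_5$ (with $5$ classes, collapsing under the outer automorphism) is essentially the only possibility, yielding case (5). I would check $A_5$ explicitly and rule out all other $T$ (e.g. $A_6$, $\PSL_2(7)$, $\PSL_2(8)$, $\PSL_3(3)$) by a class-number count, and dispose of compound diagonal ($k\geqslant 3$ composite-diagonal) and the holomorph/twisted-wreath types by noting their ranks are strictly larger (these contain a diagonal-type section on which the rank is already too big, or the point stabiliser is too small).

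For the product action type PA, $G\leqslant G_0\wr S_k$ acting on $\Delta^k$ where $G_0$ is primitive on $\Delta$ of almost simple or diagonal type with socle $T$; if $G_0$ has rank $s$ on $\Delta$ then $G_0\wr S_k$ has rank $\binom{s+k-2}{k}+\cdots$ — concretely the rank is the number of $S_k$-orbits on $k$-tuples of suborbit-indices, which for $k=2$ equals $\binom{s+1}{2}$ and for $k=3$ equals $\binom{s+2}{3}$. Since $G$ must be transitive on the $k$ simple factors of $T^k$, we need $k\geqslant 2$; forcing the rank to be $4$ then gives: $k=3$ with $s=2$ (so $G_0$ is $2$-transitive, case (4)); or $k=2$ with $s=3$, i.e. $G_0$ primitive of rank $3$ with socle $T$, and one must further bound $T$. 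Here the extra constraint is that $\Omega=\Delta^2$ must carry a rank-$4$ action, which forces $G_0$ to be a specific rank-$3$ group; I would use the classification of rank-$3$ almost simple groups to see that $|\Delta|=28$ with $\soc(G_0)=\PSL_2(8)$ is the surviving configuration, giving case (3). (One must also check that when $s=3$ and $k=2$ no diagonal-type $G_0$ survives, and that larger $s$ or $k$ overshoots.)

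The main obstacle is the careful bookkeeping in the PA and SD cases: getting the exact rank of a wreath product in product action as a function of the rank $s$ of the base group and the arity $k$, and then combining this with the existing classifications (rank-$3$ almost simple groups, $2$-transitive groups, and class numbers of small simple groups) to show the listed configurations are the only ones with rank exactly $4$ — and, in case (3), identifying $\PSL_2(8)$ on $28$ points as the unique rank-$3$ almost simple group that ``fits''. I expect the affine and almost simple cases to need no work beyond citing O'Nan--Scott, and the holomorph/twisted-wreath cases to be quick rank-lower-bound arguments, so essentially all the effort is in the diagonal and product-action analysis.
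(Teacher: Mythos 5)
Your overall strategy (O'Nan--Scott plus rank bookkeeping) is a legitimate from-scratch alternative to the paper's proof, which simply quotes Cuypers' Corollary~2.2 (a primitive group of rank $\leqslant 5$ is affine, almost simple, diagonal with socle $\PSL_2(q)^2$ for $q\in\{5,7,8,9\}$, or product action with $m\in\{2,3,4\}$) and then appeals to Cameron's survey to isolate $\PSL_2(8)$ on $28$ points. Your treatment of the diagonal case by counting $\Aut(T)$-orbits on $T$, and of the $k=3$ product action case via $\binom{s+2}{3}=4$ forcing $s=2$, is consistent with that.

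There is, however, a genuine error in your product-action analysis for $k=2$, and it is precisely the case that produces item (3). If $G\leqslant A\wr S_2$ in product action, where $A$ is the group induced on $\Delta$, and $A$ has rank $s$ on $\Delta$, then $\mathrm{rank}(G)\geqslant \mathrm{rank}(A\wr S_2)=\binom{s+1}{2}$, because rank is monotone non-increasing under passing to overgroups. For $s=3$ this gives rank at least $6$, so your proposed configuration ``$k=2$ with a rank-$3$ base group'' cannot yield rank $4$; moreover $\PSL_2(8)$ on $28$ points has rank $4$ (subdegrees $1,9,9,9$), not $3$, so a search through the rank-$3$ almost simple groups would not surface it. The true mechanism for case (3) is the opposite one: the projection $A$ must be $2$-transitive ($s=2$, so that the lower bound is $3$), and $G$ must be a proper subgroup of $A\wr S_2$ whose point stabiliser does not contain $A_\delta\times A_\delta$. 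Since the suborbits of $G$ are unions of $T_\delta\times T_\delta$-orbits where $T=\soc(A)$, rank exactly $4$ can occur only when $T$ itself fails to be $2$-transitive on $\Delta$; by the classification of $2$-transitive groups the unique almost simple $2$-transitive group whose socle is not $2$-transitive is $\PL_2(8)$ on $28$ points, and one then checks that a suitable $G$ with $\PSL_2(8)^2\lhd G$ really does have rank $4$ there. As written, your argument would conclude that the $k=2$ product-action case contributes nothing and would therefore miss case (3) entirely.
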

\begin{proof}
Assume that $G$ is neither affine nor almost simple. Then by \cite{Libeck-Praeger-Saxl-1988}, the socle of $G$ is $T^k$, where $T$ is a non-abelian simple group and $k\geqslant 2$, and $G$ has either simple diagonal action, product action or twisted wreath action on $\Omega$ (for the details of these actions see \cite[Section~1]{Libeck-Praeger-Saxl-1988}). Since $G$ has rank $4$, by \cite[Proposition~5.1]{Cameron}, we have $k\leqslant 3$. If $G$ has twisted wreath action, then we have $k\geq 6$ (see \cite[Theorem~4.7B(iv)]{DM}), a contradiction. If $G$ has product action, then by \cite[Theorem~1(P)]{Vauhkonen-phd-thesis},  part (3) or (4) happens.  If $G$ has simple diagonal action, then by \cite[Theorem~1(D)]{Vauhkonen-phd-thesis}, part (5) happens. The result then follows.
\end{proof}

%
%
%
%
%
%

We also need a result about the containment of rank 4 groups inside rank 3 groups.

\begin{prop}\label{prn:4in3}
Let $G$ be an almost simple primitive group of rank $4$ on a set $\Omega$ of size $n\geqslant 4096$. Suppose that $G$ is contained in a permutation group $H$ of rank $3$ with $\soc(G)\neq \soc(H)$ and $H=\soc(H)G$. Then $G$ and $H$ have a common nontrivial subdegree $d$ and one of the following holds:
\begin{enumerate}[{\rm (a)}]
    \item $\soc(G)=G_2(q)$, $\soc(H)=\Omega_7(q)$, $n=\frac{q^6-1}{q-1}$ and $d=q^5$; or
    \item $\soc(G)=\Omega_7$, $\soc(H)=\POmega_8^+(q)$, $n=\frac{(q^4-1)(q^3+1)}{q-1}$ and $d=q^6$.
\end{enumerate}
\end{prop}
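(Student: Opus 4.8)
The plan is to pin down $H$ first and then locate $G$ inside it; the common subdegree will fall out along the way. Since $G$ is primitive and $G\leqslant H$, the group $H$ is primitive, hence a primitive permutation group of rank $3$, and so by the O'Nan--Scott theorem $H$ is affine, almost simple, or of product action type with socle $T\times T$ where $T$ is the socle of a $2$-transitive almost simple group. For the common subdegree, fix $\omega\in\Omega$: then $G_\omega\leqslant H_\omega$, so each of the two non-trivial $H_\omega$-orbits is a union of $G_\omega$-orbits; as $G$ has three non-trivial suborbits and $H$ has two, one non-trivial $H_\omega$-orbit is a single $G_\omega$-orbit while the other splits into two. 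The length $d$ of the non-split $H_\omega$-orbit is then a non-trivial subdegree shared by $G$ and $H$, so it remains only to identify $(G,H)$ and to read off $d$.

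Suppose first that $H$ is affine, so $n=p^d$ for a prime $p$. Then $\soc(G)$ is a nonabelian simple transitive subgroup, hence has a subgroup of index $p^d$; by Guralnick's theorem on simple groups with a subgroup of prime power index, since $p^d\geqslant 4096$ we have $\soc(G)=\Alt_{p^d}$ in its natural action or $\soc(G)=\PSL_a(q)$ acting on projective points, and in both cases $\soc(G)$, hence $G$, is $2$-transitive on $\Omega$ --- contradicting $\mathrm{rank}(G)=4$. So $H$ is not affine. Next suppose $H$ is of product action type with $\soc(H)=T\times T$ and $\Omega=\Delta\times\Delta$, $|\Delta|=\delta$, $n=\delta^2$. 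Since $H/(T\times T)$ embeds in the solvable group $\mathrm{Out}(T)\wr S_2$ while $S:=\soc(G)$ is nonabelian simple, $S\leqslant T\times T$. Primitivity of $G$ forces $G$ to interchange the two factors (otherwise it preserves the rows of the grid $\Delta\times\Delta$), so $S$ lies in neither factor and is therefore a diagonal subgroup of $T\times T$, say the graph of an isomorphism $R\to R^\sigma$ with $R\leqslant T$. As $S\trianglelefteq G$ and $G$ is primitive, $S$ is transitive on $\Delta\times\Delta$, which is equivalent to a factorisation of the simple group $R$ into two subgroups of index $\delta=\sqrt{n}\geqslant 64$ (it also forces $T=R\,T_\delta$). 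Comparing these factorisations with the classification of factorisations of almost simple groups of Liebeck, Praeger and Saxl, and with the classifications of primitive groups of rank $3$ and $4$, one checks that no such configuration occurs for $n\geqslant 4096$, so $H$ is not of product action type.

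Hence $H$ is almost simple with socle $T$. Since $H/T\leqslant\mathrm{Out}(T)$ is solvable while $S:=\soc(G)$ is nonabelian simple, $S\leqslant T$, and $S<T$ because $S\neq T$. The normal subgroup $S$ is transitive on $\Omega\cong T/M$ with $M=T\cap H_\omega$ core-free in $T$, so $T=SM$ is a factorisation of an almost simple group, and therefore appears in the tables of Liebeck--Praeger--Saxl. The plan is now to run through that list, retaining a triple $(T,S,M)$ only when the action of $T$ on $T/M$ has rank $3$ (by the classification of almost simple primitive rank $3$ groups: Bannai for alternating socle, Kantor--Liebler and Liebeck--Saxl otherwise) and the action of $S$ on $T/M$ has rank $4$, and then imposing $n=|T:M|\geqslant 4096$. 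Carrying this out, the only survivors are the two infinite families in the statement: $T=\Omega_7(q)$ acting on the $(q^6-1)/(q-1)$ singular points of its natural module, with $S=G_2(q)$ acting (with rank $4$) on the points of the split Cayley generalised hexagon; and $T=\POmega_8^+(q)$ acting on one orbit of maximal totally singular subspaces, with $S=\Omega_7(q)$ acting (with rank $4$) on its maximal totally singular subspaces.

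It remains to compute $d$. In case (a), the subdegrees of $\Omega_7(q)$ on singular points are $1,\ q+q^2+q^3+q^4,\ q^5$, and those of $G_2(q)$ on the hexagon are $1,\ q(q+1),\ q^3(q+1),\ q^5$; since $q(q+1)+q^3(q+1)=q+q^2+q^3+q^4$, the $H_\omega$-orbit that splits is the one of length $q+q^2+q^3+q^4$ and the non-split orbit has length $q^5$, so $d=q^5$. In case (b), comparing the known subdegrees of $\POmega_8^+(q)$ on maximal totally singular subspaces with those of $\Omega_7(q)$ shows that the non-split orbit has length $q^6$, so $d=q^6$. The main obstacle is the bookkeeping in the almost simple case (and, to a lesser extent, the product action case): sifting the sizeable factorisation tables against the two rank conditions and the bound $n\geqslant 4096$, and confirming that the relevant actions of $G_2(q)$ and of $\Omega_7(q)$ really do have rank $4$.
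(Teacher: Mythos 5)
Your opening reduction (exactly one of the two nontrivial $H_\omega$-orbits splits under $G_\omega$, giving a common subdegree) is precisely the paper's argument, and your preliminary structural steps are sound: ruling out affine $H$ via Guralnick, forcing $\soc(G)$ to be a diagonal subgroup in the product-action case, and obtaining the factorisation $\soc(H)=\soc(G)\cdot M$ in the almost simple case. The difficulty is that the sentence ``Carrying this out, the only survivors are the two infinite families in the statement'' is where the entire content of the proposition lives, and it is asserted rather than proved. The paper does not sift the maximal factorisation tables at all: it invokes Liebeck--Praeger--Saxl's classification of pairs of primitive groups with a common suborbit and distinct socles (Theorem~1 of \emph{Primitive permutation groups with a common suborbit, and edge-transitive graphs}), which applies verbatim once the common suborbit is established, and then spends the bulk of the proof eliminating the resulting short candidate list by rank computations. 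Your route is strictly harder: the factorisation tables are much larger (most factorisations do not yield a common suborbit), $\soc(G)$ and $M$ need not be maximal in $\soc(H)$ so you must pass to maximal overgroups and then descend, and for each surviving triple you must determine the rank of $\soc(G)$ on $\soc(H)/M$, which requires case-specific inputs (Bannai for alternating socles, Kantor--Liebler and Liebeck--Saxl for classical actions, Cuypers for low-rank representations of Lie type groups, and machine computation for sporadic cases).

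To see that the sifting cannot be waved away: in your product-action branch the full diagonal $\Sp_4(4)<\Sp_4(4)\wr S_2$ acting on $120^2=14400\geqslant 4096$ points passes every structural test you impose (a simple diagonal subgroup, a factorisation of $\Sp_4(4)$ into two subgroups of index $120$, a rank-$3$ overgroup), and is eliminated only because an explicit computation shows the relevant group has rank exceeding $4$; so ``no such configuration occurs'' is false at the level of configurations and becomes true only after a check you have not performed. Likewise, in the almost simple branch the candidates $(A_{23},A_{24})$, $(M_{23},M_{24})$, $(\Sp_{22}(2),\Omega^+_{24}(2))$ and $(\Omega_7(3),\POmega_8^+(3))$, together with the infinite families $\soc(H)=A_{q+1}$ on $2$-subsets with $\soc(G)=\PSL_2(q)$, $\soc(H)=\Omega_{2m}^+(2)$ with $G=\SL_m(2).2$ on direct-sum decompositions, and $\soc(H)=\Sp_{2m}(4)$ with $\soc(G)=\Sp_m(16)$, all survive the factorisation and degree tests and must each be killed by showing a rank is at least $5$, $6$ or $8$, or (for $\Omega_7(3)<\POmega_8^+(3)$) that the two ranks coincide. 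Your identification of $d=q^5$ and $d=q^6$ in the two surviving families is correct, but until the elimination above is actually carried out the argument is a plan rather than a proof.
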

\begin{proof}
Let $\alpha\in\Omega$. Note that since $G_\alpha$ leaves invariant the three orbits of $H_\alpha$, it follows that only one of the two  orbits of $H_\alpha$ on $\Omega\backslash\{\alpha\}$ splits into two $G_\alpha$-orbits. Thus $G$ and $H$ have a common nontrivial orbital with corresponding suborbit size $d$. Thus $\soc(G)$,  $\soc(H)$ and $d$ are given by \cite[Theorem~1]{Libeck-Praeger-Saxl2}. It remains to check which pairs $(G,H)$  are rank 3 and rank 4 groups. This and the degree assumption that $n\geqslant 4096$ rules out all the possibilities in \cite[Tables 1 and 7.2]{Libeck-Praeger-Saxl2} except for possibly one of the triples 
 $(\soc(G),\soc(H),n)=(\Sp_{4}(4),\Sp_{4}(4)\times \Sp_{4}(4),14400)$,
    $(A_{23},A_{24}, |A_{24}:M_{24}|)$, $(M_{23},M_{24}, 40320)$, $(\Omega_7(3),\POmega_8^+(3),28431)$ or $(\Sp_{22}(2),\Omega^+_{24}(2),|\Omega_{24}^+(2):\mathrm{Co}_1|).$
For the first we see from \cite{BCP} that $G$ has rank greater than 4, while for the latter we use \cite{Bannai}, \cite{LSrank3} and \cite{K-L-rank-3Litype} to deduce that rank$(H)>3$. This eliminates these cases.

If $G$ and $H$ are given by \cite[Theorem~1(c)]{Libeck-Praeger-Saxl2}, then $G<H\leqslant \Sym(\Omega_0)\wr S_m$ acting in product action on $\Omega=\Omega_0^m$, for some $m\geqslant 2$ and there exist $G_0\leqslant H_0\leqslant\Sym(\Omega_0)$, such that $H_0=\soc(H_0)G_0$ and $G_0$ and $H_0$ share a common non-trivial orbital in $\Omega_0\times \Omega_0$.  Moreover, as discussed in \cite[Section 3]{Libeck-Praeger-Saxl2}, we have that $G\leqslant G_0\wr S_m$ and $H\leqslant H_0\wr S_m$.
Since rank$(H)=3$, we must have that $m=2$ and $H_0$ is 2-transitive on $\Omega_0$. Then as $G$ has rank 4 it follows from Proposition \ref{prn:rank4}, that $\soc(G_0)=\PSL_2(8)$ and $|\Omega_0|=28$. This contradicts the fact that $n\geqslant 4096.$ 

It remains to consider the infinite families in \cite[Table 7.1]{Libeck-Praeger-Saxl2}. For the groups in lines~1--4 of \cite[Table 7.1]{Libeck-Praeger-Saxl2}, $\soc(H)$ is an alternating group, and noting that $|\Omega|\geqslant 4096$, by \cite{Bannai}, we see that rank$(H)>3$, a contradiction. 

For the groups in line~5 of \cite[Table 7.1]{Libeck-Praeger-Saxl2}, we have $\soc(G)=L_2(q)$, $\soc(H)=A_{q+1}$ and $\Omega$ is the set of pairs of $\{1,\ldots, q+1\}$. In this case, both $G$ and $H$ are $3$-transitive on $\{1,\ldots,q+1\}$, and $H$ does have rank 3. However, for $n\geqslant 4096$, $G$ does not have rank $4$ (see \cite[Theorem 1.1]{Cuypers} or \cite[Theorem 4]{Vauhkonen-phd-thesis}), a contradiction.

For the groups in line~6 of \cite[Table 7.1]{Libeck-Praeger-Saxl2}, we have $\soc(G)=G_2(q)$ and $\soc(H)=\Omega_7(q)$. If $\Omega=(H: P_1)$, then rank$(H)=3$. Moreover, the action of $G$ on $\Omega$ is the action of $G$ on the set of points of the associated generalised hexagon which has rank 4. Thus we obtain case (a).  If $\Omega=(H: N_1^-)$ or $(H: N_1^+)$, then since $|\Omega|\geqslant 4096$, by  \cite[Lemma 6.8]{Libeck-Praeger-Saxl2}, we have rank$(H)>3$, a contradiction.

For the groups in line~7 of \cite[Table 7.1]{Libeck-Praeger-Saxl2}, we have $\soc(G)=\Omega_7(q)$ and $\soc(H)=\POmega_8^+(q)$. Then by \cite[Lemma 6.7]{Libeck-Praeger-Saxl2}, rank$(H)>3$ unless $q=3$, but in this case $\rank(G)=\rank(H)$, a contradiction.
 
For the groups in line~8 of \cite[Table 7.1]{Libeck-Praeger-Saxl2}, we have $\soc(G)=\Omega_{2m-1}(q)$ and $\soc(H)=\POmega_{2m}^+(q)$. If $\Omega=(H: P_m)$ or $(H:P_{m-1})$ then the action of $H$ is on one of the orbits of $H$ on maximal totally singular subspaces. Since rank$(H)=3$, it follows from \cite{K-L-rank-3Litype} that  $m=4$. Thus the three actions on $(H: P_m)$, $(H:P_{m-1})$ and $(H:P_1)$ are permuted by triality and so are permutationally isomorphic. Hence we may assume that $\Omega=(H:P_4)$ and $G$ is the stabiliser in $H$ of a nondegenerate hyperplane $W$. Then by \cite[pg.~63]{LPSmaxfacts}, $H_\alpha\cap G$ is the stabiliser in $G$ of a maximal totally singular subspace of $W$.   Thus the action of $G$ on $\Omega$ is equivalent to the action of $G$ on maximal totally singular subspaces of dimension 3 in $W$. This action has rank 4 \cite[Theorem~9.4.3]{Brouwer-Book} and so we get case (b), where the subdegree is given by \cite[Lemma~9.4.1]{Brouwer-Book}.

For the groups in line~9 of \cite[Table 7.1]{Libeck-Praeger-Saxl2}, we have $G=L_{m}(2).2$ (where $G$ contains a graph automorphism) and 
$H=\Omega_{2m}^+(2).(2,m-1)$,
with $m\geqslant 4$. In this case, we have $\Omega=(H: N_1)$ and as seen in the proof of \cite[Lemma 6.5]{Libeck-Praeger-Saxl2},  $H$ has rank 3. Moreover, the action of $G$ on $\Omega$ is the action of $G$ on the set of decompositions $V=\langle e_1\rangle\oplus\langle e_2\ldots,e_m\rangle$ of an $m$-dimensional vector space over $\GF(2)$.  Note that if $\langle u\rangle \oplus U$ and $\langle v\rangle \oplus W$ lie in the same suborbit of the stabiliser of the decomposition $V=\langle e_1\rangle\oplus\langle e_2\ldots,e_m\rangle$, then $\dim(U\cap \langle e_2\ldots,e_m\rangle) =\dim(W\cap \langle e_2\ldots,e_m\rangle)$. Since $|\Omega|\geqslant 4096$, we have that $m>5$ and so   this action has rank at least 5, a contradiction.

For the groups in lines~10--13 of \cite[Table 7.1]{Libeck-Praeger-Saxl2},  by \cite{K-L-rank-3Litype}, we have rank$(H)>3$, a contradiction. %
   
For the groups in lines~14 of \cite[Table 7.1]{Libeck-Praeger-Saxl2},  we have that $\soc(G)=\Sp_m(16)$ and $\soc(H)=\Sp_{2m}(4)$ and $\Omega=(\soc(H): O_{2m}^{\pm}(4))$. By  \cite[Lemma~6.2]{Libeck-Praeger-Saxl2}, we have that rank$(H)=3$. Moreover, by \cite[3.2.1(d)]{LPSmaxfacts} we see that $\soc(G)_\alpha=O_{2m}^{\pm}(16)$. The rank of this action is given in the proof of \cite[Lemma~6.2]{Libeck-Praeger-Saxl2}, where we see that it has rank 8, a contradiction.
\end{proof}

\subsection{$\frac{3}{2}$-transitive permutation group}

Let $G$ be a transitive permutation group on a set $\Ome$.
If $G$ is non-regular on $\Omega$ and all the nontrivial orbits of a point-stabiliser have equal size, then we say that $G$ is a {\em $\frac{3}{2}$-transitive permutation group} on $\Omega$. The following proposition provides a classification of $\frac{3}{2}$-transitive permutation groups.

\begin{prop}{\rm\cite[Corollary~3]{Libeck-Praeger-Saxl3}}\label{2/3-trans}
Let $G$ be a $\frac{3}{2}$-transitive permutation group of degree $n$. Then one of the following holds:
\begin{enumerate}
  \item [{\rm (1)}]\ $G$ is $2$-transitive;
  \item [{\rm (2)}]\ $G$ is a Frobenius group;
  \item [{\rm (3)}]\ $G$ is affine and $n=p^d$ with $p$ prime: $G=\mz_p^d\rtimes H$, where $H\leqslant \GL_d(p)$ is one of the following groups:
\begin{enumerate}
  \item [{\rm (i)}]\ $H\leqslant \GammaL_1(p^d)$
  \item [{\rm (ii)}]\ $H=S_0(p^{d/2})$ is the subgroup of $\GL_2(q)$ of order $4(q-1)$ consisting of all monomial matrices of determinant $\pm1$,
 with $p$ an odd prime;
  \item [{\rm (iii)}]\ $H$ is soluble and $n=p^d=3^2,5^2,7^2,11^2,17^2$ or $3^4$;
  \item [{\rm (iv)}]\ $\SL_2(5)\leqslant H\leqslant \GammaL_2(p^{d/2})$, where $p^{d/2}=9, 11, 19, 29$ or $169$.
\end{enumerate}
  \item [{\rm (4)}]\ $G$ is almost simple: either
\begin{enumerate}
  \item [{\rm (a)}]\ $n=21$, $G=A_7$ or $S_7$ acting on the set of pairs in $\{1,\ldots, 7\}$, or
  \item [{\rm (b)}]\  $n=\frac{1}{2}q(q-1)$ where $q=2^f\geqslant 8$, and either $G=\PSL_2(q)$, or $G={\rm P\G L}_2(q)$ with $f$ a  prime.

\end{enumerate}
\end{enumerate}

\end{prop}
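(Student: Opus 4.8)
The plan is to follow the strategy of Liebeck--Praeger--Saxl; this is a substantial classification that rests on the classification of finite simple groups (CFSG), so I only sketch the architecture. \textbf{Step 1 (reduce to primitive).} By Wielandt's theorem an imprimitive $\frac{3}{2}$-transitive permutation group is a Frobenius group, which is case (2); so I may assume $G$ is primitive. If $G$ is $2$-transitive we are in case (1), so assume henceforth that $G$ is primitive but not $2$-transitive, with all nontrivial suborbits of a common size $d\geqslant 2$.

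\textbf{Step 2 (O'Nan--Scott reduction).} Apply the O'Nan--Scott theorem to $G$. In product action on $\Omega=\Delta^k$ with $k\geqslant 2$ one has both ``short'' suborbits, obtained by moving a single coordinate inside a suborbit of the component group, and ``long'' suborbits moving several coordinates; equality of all nontrivial subdegrees forces the component to be $2$-transitive and $k=2$, which makes $G$ itself $2$-transitive — excluded. The twisted wreath case and the diagonal-type case with at least three factors are eliminated by a similar comparison of subdegrees of different ``lengths''. For simple diagonal type with $\soc(G)=T\times T$ and a diagonal point stabiliser, the nontrivial subdegrees of $\soc(G)$ are the sizes of the non-identity conjugacy classes of $T$, and those of $G$ are unions of these under $\Aut(T)$ and the coordinate swap; since no finite nonabelian simple group has all of its nontrivial inversion-closed $\Aut(T)$-orbit sizes equal, this case contributes nothing new. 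Hence $G$ is either affine or almost simple.

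\textbf{Step 3 (almost simple case).} Using CFSG, run through the almost simple primitive groups and their maximal subgroups and isolate those actions in which all nontrivial subdegrees coincide. Known subdegree data (rank formulas, the lists of rank $3$, $4$ and $5$ actions, and explicit suborbit computations) reduces this to a finite check, with outcome: $A_7$ or $S_7$ on the $21$ pairs from $\{1,\ldots,7\}$ (case (4)(a)); and $\PSL_2(q)\leqslant G\leqslant\PL_2(q)$ with $q=2^f\geqslant 8$ acting on the $\frac{1}{2}q(q-1)$ cosets of a dihedral subgroup of order $2(q+1)$ (case (4)(b)), where the primality of $f$ in the $\PL$-case is precisely what prevents a field-automorphism suborbit of a different length from occurring.

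\textbf{Step 4 (affine case — the main obstacle).} Write $G=\mz_p^d\rtimes H$ with $H\leqslant\GL_d(p)=\GL(V)$; the hypothesis is that every $H$-orbit on $V^*=V\setminus\{0\}$ has the same size. If $H$ is transitive on $V^*$ then $G$ is $2$-transitive (case (1)), and such $H$ are listed by Hering's theorem. Otherwise $H$ has at least two orbits of equal size on $V^*$; if $H$ is semiregular there, then $H$ is a Frobenius complement acting semiregularly on $V^*$ — case (3)(ii). In the remaining situation $H$ is a linear group that is neither transitive nor semiregular on $V^*$ yet all of whose vector-orbits have the same size: when $H$ is soluble this is Passman's determination of soluble $\frac{3}{2}$-transitive groups, yielding $S_0(p^{d/2})$ (case (3)(iii)) and the small sporadic degrees $3^2,5^2,7^2,11^2,17^2,3^4$ (case (3)(iv)); when $H$ is insoluble one invokes Aschbacher's classification of subgroups of $\GL(V)$ together with CFSG, and a careful orbit count in each Aschbacher class cuts the list down to the $\SL_2(5)$ examples over $\GF(9),\GF(11),\GF(19),\GF(29),\GF(169)$ (case (3)(v)), with the semilinear possibilities $H\leqslant\GammaL_1(p^d)$ collected as case (3)(i). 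This affine analysis — controlling a linear group by the sizes of its vector-orbits in the absence of transitivity, and disposing of every Aschbacher class — is the hard core of the argument; by comparison the almost simple case, though also CFSG-dependent, amounts to a catalogue check against known subdegrees.
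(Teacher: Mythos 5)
The paper does not prove this proposition at all: it is imported verbatim as Corollary~3 of the cited Liebeck--Praeger--Saxl classification of $\frac{3}{2}$-transitive groups, so there is no internal proof to compare your attempt against. Your sketch is a fair reconstruction of the architecture of that external proof (Wielandt's dichotomy that a $\frac{3}{2}$-transitive group is primitive or Frobenius, the O'Nan--Scott reduction to affine and almost simple types, Passman's soluble affine classification, and the Aschbacher-class analysis for insoluble linear groups), but as it stands it is an outline rather than a proof: every point at which the theorem is actually decided is deferred to ``a finite check'', ``a catalogue check against known subdegrees'', or ``a careful orbit count in each Aschbacher class''. In particular, the insoluble affine case --- showing that the only insoluble, non-transitive, non-semiregular examples are the $\SL_2(5)$ ones over the five listed fields --- is the hard core you yourself identify, and it occupies the bulk of the source papers; it cannot be compressed into a single sentence and still count as a proof.

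There are also two concrete slips worth flagging. In Step~2, for product action on $\Delta^2$ with a $2$-transitive component of degree $m$, the two nontrivial subdegrees are $2(m-1)$ and $(m-1)^2$; equating them forces $m=3$ rather than making $G$ itself $2$-transitive, so this case has to be excluded by handling the degree-$9$ configuration (or absorbing it into the affine case), not by the assertion you give. And in the diagonal-type elimination, the claim that no nonabelian simple group $T$ has all of its nontrivial suborbit sizes equal (orbits of $\Aut(T)$ on $T\setminus\{1\}$ fused with inversion) is itself a nontrivial, CFSG-dependent fact that needs its own argument. None of this casts doubt on the statement, which the paper correctly takes on authority from the cited reference, but the proposal should be read as a roadmap of the known proof rather than as a proof.
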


We shall end this section with the following lemma.

\begin{lem}\label{lem-3/2}
Let $G=\mz_p^d\rtimes H$ be a $\frac{3}{2}$-transitive permutation group on $V=\mz_p^d$, where $1\neq H\leq\GL_d(p)$ and $p$ is a prime.
If $H$ has three orbits on $V\setminus\{0\}$, then $G$ is primitive.
\end{lem}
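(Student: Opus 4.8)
The plan is to show that no nontrivial $G$-invariant partition of $V=\mz_p^d$ can exist, by ruling out such partitions using the orbit structure of $H$. First I would recall that $G=\mz_p^d\rtimes H$ is affine and transitive, so any system of blocks for $G$ on $V$ is a congruence on the abelian group $\mz_p^d$: more precisely, if $\B$ is a block containing $0$, then $\B$ is a subgroup of $V$ (since the translations in $G$ fixing $\B$ setwise form a subgroup acting transitively on $\B$, hence $\B=\{0\}+(\text{that subgroup})$), and moreover $\B$ is $H$-invariant because $H=G_0$ fixes the block containing $0$. So the task reduces to showing that $V$ has no proper nontrivial $H$-invariant subgroup, i.e. that $H$ acts irreducibly on $V=\mz_p^d$ viewed as an $\mz_p$-space — equivalently, that the only $H$-submodules are $0$ and $V$.

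Next I would use the hypothesis that $H$ has exactly three orbits on $V\setminus\{0\}$, say $O_1,O_2,O_3$. Suppose for contradiction that $W$ is a proper nontrivial $H$-invariant subgroup of $V$. Then $W\setminus\{0\}$ is a nonempty union of $H$-orbits, and since $W\neq V$ it is a proper nonempty sub-union of $\{O_1,O_2,O_3\}$; likewise $(V\setminus W)$ is a nonempty union of the remaining orbits. The key combinatorial fact to exploit is that $\frac32$-transitivity forces $|O_1|=|O_2|=|O_3|=:k$, so $|V|-1=3k$, i.e. $p^d=3k+1$. If $W\setminus\{0\}$ consists of one orbit then $|W|=k+1$; if of two orbits then $|W|=2k+1$. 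In either case $|W|$ divides $|V|=3k+1$. From $k+1\mid 3k+1$ we get $k+1\mid 3(k+1)-(3k+1)=2$, so $k+1\in\{1,2\}$, forcing $k=1$ and $p^d=4$, and from $2k+1\mid 3k+1$ we get $2k+1\mid 2(3k+1)-3(2k+1)=-1$, impossible. Thus the only loose end is $p^d=4$, $k=1$: here $H$ has three orbits of size $1$ on $V\setminus\{0\}$, so $H=1$, contradicting $H\neq 1$. Hence no such $W$ exists, $H$ is irreducible, and $G$ is primitive.

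The main obstacle — really the only subtlety — is the very first reductive step: making precise why a block of the affine group $G$ through $0$ is forced to be an $H$-invariant subgroup of $V$. This is standard (it is the classical fact that the blocks of an affine primitive-candidate group correspond to $H$-submodules of $V$), and can be cited or quickly argued via the translation subgroup as above; everything after that is the short divisibility argument. One should be mildly careful to handle the degenerate case $p^d=4$ cleanly using $H\neq 1$, and to note that we do not even need the full classification in Proposition~\ref{2/3-trans} — only the equal-orbit-size property that defines $\frac32$-transitivity — so the lemma is genuinely elementary given that property.
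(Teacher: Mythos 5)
Your proof is correct and its core is the same as the paper's: a proper block through $0$ must be $\{0\}$ together with one or two equal-sized $H$-orbits, so it has cardinality $k+1$ or $2k+1$ with $3k+1=p^d$, and the same divisibility computation kills both cases (with the degenerate $p^d=4$ case excluded by $H\neq 1$). The only cosmetic difference is how the block is produced — the paper invokes Higman's criterion and takes a connected component of a disconnected orbital digraph, whereas you use the standard fact that a block of an affine group containing $0$ is an $H$-invariant subgroup — but the decisive arithmetic is identical.
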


\begin{proof} Note that each orbit of $H$ on $V\backslash\{0\}$ has length $\frac{(p^d-1)}{3}$. Suppose that $G$ is imprimitive. By Proposition~\ref{primitive-orbital},
there exists one non-trivial orbital graph, say $\G$ of $G$, which is disconnected. Then $\G$ would have a connected component of order $\frac{(p^d-1)}{3}+1$ or $\frac{2(p^d-1)}{3}+1$, and so either $\frac{(p^d-1)}{3}+1$ or $\frac{2(p^d-1)}{3}+1$ divides
$p^d$. For the former, we have $\frac{(p^d-1)}{3}+1=\frac{p^d+2}{3}$ divides $p^d$, and so $3p^d=k(p^d+2)$ for some positive integer $k$. It follows that $2k=(3-k)p^d$, and hence $k=2$ and $p^d=4$. This forces $H=1$, a contradiction. For the latter, we have $\frac{2(p^d-1)}{3}+1=\frac{2p^d+1}{3}$ divides $p^d$, and so $3p^d=k(2p^d+1)$  for some integer $k$. It follows that $k=(3-2k)p^d$, and hence $k=1$, a contradiction. Thus $G$ is primitive.\end{proof}

\section{Examples}\label{examples}

\subsection{Tensor products}\label{sec:tensor}
We begin with the following useful lemma.

\begin{lem}\label{lem:hamming}
Let $G_0\leqslant\GL(V)$ preserve the decomposition $V=V_1\oplus V_2$ and let $G=V\rtimes G_0\leqslant \AGL(V)$. Suppose that  $\dim(V_1)=\dim(V_2)$ and that  $B=(V_1\cup V_2)\backslash\{0\}$ is an orbit of $G_0$. Then the orbital digraph $\Gamma$ for $G$ arising from the suborbit $B$ is isomorphic to the Hamming graph $H(2,|V_1|)$.
\end{lem}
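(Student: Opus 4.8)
The plan is to exhibit an explicit isomorphism between $\Gamma$ and the Hamming graph $H(2,k)$, where $k=|V_1|=|V_2|$, by first understanding the neighbourhood of the zero vector and then transporting this around via the regular translation subgroup $V$. Write $q=|V_1|$ (a prime power), so $\dim V_1=\dim V_2=:m$ and $q=p^m$. Since $G_0$ preserves the decomposition $V=V_1\oplus V_2$, every vector $v\in V$ can be written uniquely as $v=(a,b)$ with $a\in V_1$, $b\in V_2$; I would use this to set up the candidate bijection $\ph\colon V\to \D\times\D$, where $\D=\{0,1,\dots,q-1\}$ is identified with $V_1$ (equivalently with $V_2$) via any fixed bijections sending $0$ to $0$, namely $\ph(a,b)=(\overline a,\overline b)$. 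This is clearly a bijection of vertex sets; the content is that it is a graph isomorphism.

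First I would describe the edges of $\Gamma$ incident with the zero vector $0$. By definition of orbital digraph, the out-neighbours of $0$ are exactly the elements of the suborbit $B=(V_1\cup V_2)\backslash\{0\}$, i.e.\ the nonzero vectors of the form $(a,0)$ or $(0,b)$. Note that $B$ is a single $G_0$-orbit by hypothesis and is symmetric (it is a union of subspaces, hence closed under negation), so $\Gamma$ is an undirected graph and its edges through $0$ join $0$ to precisely those vertices differing from $(0,0)$ in exactly one of the two ``coordinates'' $a,b$. Under $\ph$ these are exactly the Hamming neighbours of $(0,0)$ in $H(2,q)$. Next, since $V\leqslant G$ acts regularly on vertices by translation and $\Gamma$ is $G$-vertex-transitive, the edge set of $\Gamma$ is the $V$-orbit of the edges through $0$: that is, $\{u,v\}$ is an edge of $\Gamma$ if and only if $v-u\in B$. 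Thus $\{(a,b),(a',b')\}$ is an edge of $\Gamma$ iff $(a'-a,b'-b)\in (V_1\cup V_2)\backslash\{0\}$, iff either $a=a'$ and $b\neq b'$, or $b=b'$ and $a\neq a'$. The Hamming graph $H(2,q)$ likewise has $\{(x,y),(x',y')\}$ as an edge iff the tuples differ in exactly one coordinate, i.e.\ iff ($x=x'$, $y\neq y'$) or ($y=y'$, $x\neq x'$). Since $\ph$ was chosen to send the $V_1$-coordinate to the first Hamming coordinate and the $V_2$-coordinate to the second (and sends $0$ to $0$ in each), these two adjacency conditions match up under $\ph$, so $\ph$ is the required isomorphism. (That $k=q\geqslant 2$ so that $H(2,q)$ is a genuine Hamming graph is automatic since $V_1\neq 0$, as $B$ is nonempty.)

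The only place requiring slight care — and the closest thing to an obstacle — is making sure that ``differs in exactly one coordinate'' really is the description of $B$, i.e.\ that $B$ does not accidentally contain or omit vectors once we pass to the difference characterization: this is just the observation that $(V_1\cup V_2)\backslash\{0\}$ consists exactly of the vectors $(a,b)$ with precisely one of $a,b$ zero, which is immediate from $V=V_1\oplus V_2$. Everything else is a routine unwinding of definitions, using vertex-transitivity of $G$ (hence of $\Gamma$) to reduce edge-checking to the neighbourhood of $0$, and using that $B$ is symmetric so that $\Gamma$ is a graph rather than a proper digraph. No appeal to the precise structure of $G_0$ beyond the two stated hypotheses (preserving $V_1\oplus V_2$, and $B$ being one $G_0$-orbit) is needed.
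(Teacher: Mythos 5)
Your proposal is correct and follows essentially the same route as the paper: identify $V$ with ordered pairs via $V=V_1\oplus V_2$, observe that the neighbours of $0$ are exactly the vectors differing from $(0,0)$ in one coordinate, and use the regular translation action of $V$ to propagate this adjacency description to all vertices. The extra remarks (symmetry of $B$, the explicit bijection to $\Delta\times\Delta$ using $|V_1|=|V_2|$) are fine but add nothing beyond what the paper's proof already contains implicitly.
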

\begin{proof}
Since $V=V_1\oplus V_2$ the elements of $V$ can be identified with ordered pairs whose first coordinate lies in $V_1$ and whose second coordinate lies in $V_2$. With this identification $V_1$ is identified with those elements of $V$ whose second coordinate is $0$ and $V_2$ is identified with those elements of $V$ whose first coordinate is $0$. The neighbours of $(0,0)$ in $\Gamma$ are then those vertices which differ in exactly one coordinate.  Since $V$ acts transitively by addition on the set of vertices of $\Gamma$ and induces automorphisms, it follows that two vertices are adjacent if and only if they differ in exactly one coordinate. Hence $\Gamma$ is the Hamming graph $H(2,|V_1|)$.
\end{proof}

We can now give two infinite families of 2-closed groups of rank 4 that are not the automorphism group of any graph or digraph.

\begin{lem}\label{family2}
Let $m\geqslant 2$ and let $X$ and $Y$ be $2$- and $m$-dimensional $\GF(3)$-spaces, respectively. Let  $D_8$ be the subgroup of $\GL_{2}(3)$ stabilising a decomposition of $X$ into the direct sum of two $1$-dimensional subspaces  and let $D_8 \circ \GL_m(3)$ act on $V=X\otimes Y$.   Let $\GG(m)=V\rtimes (D_8\circ \GL_m(3))$ be the permutation group with $V$ as a regular normal subgroup. Then $\GG(m)$  is a $2$-closed primitive affine permutation group of degree $3^{2m}$ and of rank $4$. Moreover, $\GG(m)$ is not the automorphism group of any graph or digraph.
\end{lem}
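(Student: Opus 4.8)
The plan is to compute the suborbits of $\GG(m)$ explicitly, recognise the resulting orbital graphs, obtain the $2$-closure as an intersection of their automorphism groups, and finally rule out $\GG(m)=\Aut(\Gamma)$ for each of the finitely many $\GG(m)$-invariant graphs. Throughout write $X=\langle e_1,e_2\rangle$, set $e_3=e_1+e_2$, $e_4=e_1-e_2$, so that $X_1,\dots,X_4$ with $X_i=\langle e_i\rangle$ are the four $1$-spaces of $X$; put $V_i=X_i\otimes Y$ and identify $v\in V$ with the pair $(y,z)\in Y\oplus Y$ where $v=e_1\otimes y+e_2\otimes z$. Since $D_8$ acts on $X$ as the full monomial group, it fixes the pairs $\{X_1,X_2\}$ and $\{X_3,X_4\}$, so $\GG(m)_0=D_8\circ\GL_m(3)$ preserves both decompositions $V=V_1\oplus V_2$ and $V=V_3\oplus V_4$, with each $V_i$ of dimension $m$. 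A rank computation for tensors in $X\otimes Y$ shows the nonzero $\GG(m)_0$-orbits on $V$ are: (a) the nonzero rank-$1$ tensors with $X$-part in $X_1\cup X_2$, that is $(V_1\cup V_2)\setminus\{0\}$, of size $2(3^m-1)$; (b) those with $X$-part in $X_3\cup X_4$, that is $(V_3\cup V_4)\setminus\{0\}$, of size $2(3^m-1)$; and (c) the rank-$2$ tensors $\{e_1\otimes y_1+e_2\otimes y_2 \mid y_1,y_2\ \GF(3)\text{-linearly independent}\}$, of size $(3^m-1)(3^m-3)$. Transitivity on (a) and (b) uses that $\GL_m(3)$ is transitive on $Y\setminus\{0\}$ together with the monomial element of $D_8$ interchanging $V_1$ with $V_2$ (resp.\ $V_3$ with $V_4$); transitivity on (c) uses that $\GL_m(3)$ is transitive on ordered pairs of linearly independent vectors of $Y$. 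As $2(3^m-1)+2(3^m-1)+(3^m-1)(3^m-3)=3^{2m}-1$, the group $\GG(m)$ has rank $4$; and since $-I\in\GG(m)_0$ every suborbit is self-paired, so every orbital digraph is a graph.

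Let $\Gamma_1,\Gamma_2,\Gamma_3$ be the orbital graphs for (a), (b), (c). Applying Lemma~\ref{lem:hamming} to the decomposition $V=V_1\oplus V_2$ with orbit $B=(V_1\cup V_2)\setminus\{0\}$, and then to $V=V_3\oplus V_4$, gives $\Gamma_1\cong\Gamma_2\cong H(2,3^m)$, while $\Gamma_3$ is the complement of the edge-disjoint union $\Gamma_1\cup\Gamma_2$. Since $H(2,3^m)$ is connected and $\Gamma_3$ is an orbital graph of the rank-$3$ group $V\rtimes(\GL_2(3)\circ\GL_m(3))$, which is primitive by Theorem~\ref{subdegree}, all three orbital graphs are connected, so $\GG(m)$ is primitive by Proposition~\ref{primitive-orbital}.

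Next I would prove $\GG(m)^{(2)}=\GG(m)$. As every orbital digraph is a graph and $\Gamma_3=\overline{\Gamma_1\cup\Gamma_2}$, we have $\GG(m)^{(2)}=\Aut(\Gamma_1)\cap\Aut(\Gamma_2)\cap\Aut(\Gamma_3)=\Aut(\Gamma_1)\cap\Aut(\Gamma_2)$, so only the inclusion $\Aut(\Gamma_1)\cap\Aut(\Gamma_2)\leqslant\GG(m)$ needs proof. Let $g$ lie in this intersection; composing with a translation in $\GG(m)$ we may assume $g$ fixes $0$. The maximal cliques of $\Gamma_1$ through $0$ are $V_1,V_2$ and those of $\Gamma_2$ through $0$ are $V_3,V_4$, so $g$ preserves each of $\{V_1,V_2\}$ and $\{V_3,V_4\}$; since $D_8\leqslant\GG(m)_0$ induces, via $X_i\leftrightarrow V_i$, every permutation of $\{V_1,V_2,V_3,V_4\}$ respecting this block structure, after composing with a suitable element of $D_8$ we may assume $g$ fixes each $V_i$ setwise, hence fixes both parallel classes of maximal cliques of $\Gamma_1$ and of $\Gamma_2$. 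Fixing the classes of $\Gamma_1$ (whose cliques are the fibres of $(y,z)\mapsto z$ and of $(y,z)\mapsto y$) forces $g$ to be a product map $(y,z)\mapsto(\pi y,\rho z)$ with $\pi,\rho\in\Sym(Y)$ fixing $0$; fixing the classes of $\Gamma_2$ (whose cliques are the fibres of the $\GF(3)$-linear functionals $(y,z)\mapsto y-z$ and $(y,z)\mapsto y+z$) means $\pi y-\rho z$ depends only on $y-z$. Substituting $z=0$ then $y=0$ gives $\rho z=-\pi(-z)$ and $\pi(a)+\pi(b)=\pi(a+b)$, so $\pi$ is additive, hence $\GF(3)$-linear, and $\rho=\pi$; thus $g=I_X\otimes\pi\in D_8\circ\GL_m(3)=\GG(m)_0$. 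So $(\Aut(\Gamma_1)\cap\Aut(\Gamma_2))_0=\GG(m)_0$, and since this group contains the transitive translation subgroup $V$, we get $\Aut(\Gamma_1)\cap\Aut(\Gamma_2)=V\cdot\GG(m)_0=\GG(m)$, proving $\GG(m)$ is $2$-closed. Finally, any $\GG(m)$-invariant digraph is a union of orbitals, hence (all orbitals being self-paired) a graph, and so is one of $\emptyset$, $K_{3^{2m}}$, $\Gamma_1$, $\Gamma_2$, $\Gamma_3$, $\Gamma_1\cup\Gamma_2$, $\overline{\Gamma_1}$, $\overline{\Gamma_2}$. In each case the automorphism group strictly contains $\GG(m)$: for $\emptyset$ and $K_{3^{2m}}$ it is $\Sym(V)$; for $\Gamma_1$ and $\overline{\Gamma_1}=\Gamma_2\cup\Gamma_3$ it is $\Aut(H(2,3^m))\cong S_{3^m}\wr S_2$, of order $2\bigl((3^m)!\bigr)^2>|\AGL_{2m}(3)|\geqslant|\GG(m)|$ for $m\geqslant 2$ (and likewise for $\Gamma_2$, $\overline{\Gamma_2}$); and for $\Gamma_1\cup\Gamma_2$ (the graph of nonzero rank-$1$ tensor differences) and its complement $\Gamma_3$ it contains the strictly larger group $V\rtimes(\GL_2(3)\circ\GL_m(3))$. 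Hence $\GG(m)$ is not the automorphism group of any graph or digraph.

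The main obstacle is the $2$-closure computation: the reduction of an arbitrary element of $\Aut(\Gamma_1)\cap\Aut(\Gamma_2)$ to one fixing $0$ and all four cliques $V_1,\dots,V_4$ through it, and the linear-algebra argument forcing the product map $(\pi,\rho)$ attached to the first Hamming structure to collapse to a single $\GF(3)$-linear map because it must simultaneously respect the second Hamming structure. The remaining ingredients — the suborbit description, the two applications of Lemma~\ref{lem:hamming}, and the list of invariant graphs — are routine bookkeeping.
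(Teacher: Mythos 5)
Your proposal is correct, and its first half (the explicit suborbits $B_1=(V_1\cup V_2)\setminus\{0\}$, $B_2=(V_3\cup V_4)\setminus\{0\}$, $B_3=$ rank-$2$ tensors, the double application of Lemma~\ref{lem:hamming}, and the observation that $\Aut(\Gamma_1)\supseteq S_{3^m}\wr S_2$ and $\Aut(\Gamma_3)\supseteq V\rtimes(\GL_2(3)\circ\GL_m(3))$ are too big) is essentially the paper's argument. Where you genuinely diverge is the proof of $2$-closedness. The paper treats $m=2$ by a {\sc Magma} computation and, for $m\geqslant 3$, cites the maximality of $M=\GL_2(3)\circ\GL_m(3)$ in $\GL_{2m}(3)$ (via the Kleidman--Liebeck and Bray--Holt--Roney-Dougal tables) together with the Praeger--Saxl theorem on closures of primitive groups to force $\GG(m)^{(2)}\leqslant V\rtimes M$, and then cuts down to $\GG(m)_0$ as the stabiliser of $B_1$ in $M$. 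You instead compute $\Aut(\Gamma_1)\cap\Aut(\Gamma_2)$ directly: an element fixing $0$ permutes the maximal cliques $V_1,V_2$ of $\Gamma_1$ and $V_3,V_4$ of $\Gamma_2$ through $0$ (and since it preserves each $\Gamma_i$ separately it cannot mix the two pairs, so the permutation it induces on $\{V_1,\dots,V_4\}$ is realised by $D_8$); after normalising, the two transverse Hamming structures force the resulting product map $(y,z)\mapsto(\pi y,\rho z)$ to satisfy $\pi y-\rho z=\pi(y-z)$, whence $\pi$ is additive, hence $\GF(3)$-linear, and $\rho=\pi$. This is elementary, self-contained, and uniform in $m\geqslant 2$ (no computer check, no maximal-subgroup tables, no closure theorem), at the cost of being specific to this configuration, whereas the paper's route recycles machinery it also needs for $\mathcal H(m)$. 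Two cosmetic points: your citation of Theorem~\ref{subdegree} for the primitivity of $V\rtimes M$ is slightly circular (that result presupposes primitivity; irreducibility of $M$ on $X\otimes Y$, or simply of $D_8\circ\GL_m(3)$, gives primitivity of $\GG(m)$ directly), and the phrase ``every permutation respecting this block structure'' should be read as ``preserving each of $\{V_1,V_2\}$ and $\{V_3,V_4\}$ setwise'', which is exactly the $C_2\times C_2$ that $D_8$ induces and exactly what is needed.
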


\begin{proof} 
 For convenience, we set $\GG=\GG(m)$ and we let $D_8$ be the subgroup of $\GL_{2}(3)$ preserving the decomposition $X=\langle x_1\rangle\oplus \langle x_2\rangle$. Then $\GG_0=D_8\circ\GL_m(3)$. Let $M=\GL_{2}(3)\circ\GL_m(3)$ be the stabiliser in $\GL_{2m}(3)$ of the tensor decomposition $V=X\otimes Y$. 

Let $V_1=\langle x_1\rangle\otimes Y$ and $V_2=\langle x_2\rangle\otimes Y$. Then $V=V_1\oplus V_2$ and $\dim(V_1)=\dim(V_2)=m$. Moreover, $\GG_0$ is the stabiliser in $M$ of this decomposition $V=V_1\oplus V_2$.  Let \[B_1=(V_1\cup V_2)\setminus\{0\}\quad\quad  B_2=(\langle x_1+x_2\rangle\otimes Y\cup \langle x_1-x_2\rangle\otimes Y)\setminus\{0\}\]
and let $B_3=V^*\backslash(B_1\cup B_2)$.  By \cite[Lemma~1.1]{Liebeck-affine}, $B_1\cup B_2$ and $B_3$ are the two orbits of $M$ on $V^*$. It is also easy to see that $B_1$ and $B_2$ are $\mathcal{G}_0$-orbits. Since $V=V_1\oplus V_2$, Lemma \ref{lem:hamming} implies that the orbital digraph for $\mathcal{G}$ arising from $B_1$ is the Hamming graph $H(2,3^m)$.  Similarly, $\GG_0$ stabilises the decomposition $V=W_1\oplus W_2$ where $W_1=\langle x_1+x_2\rangle\otimes Y$ and $W_2= \langle x_1-x_2\rangle\otimes Y$. Thus Lemma \ref{lem:hamming} implies that the orbital graph of $\GG$ arising from  $B_2$ is also isomorphic to $H(2,3^m)$. Note that the automorphism group of $H(2,3^m)$ is $S_{3^m}\wr S_2$, which is larger than $\GG_0$.

Now the vectors of $V$ that are in neither $V_1,V_2,W_1$ nor $W_2$ are of the form $\lambda_1x_1\otimes u+\lambda_2x_2\otimes v$ where $\lambda_1,\lambda_2\neq 0$ and $\langle u,v\rangle$ is a 2-dimensional subspace of $Y$. Since $\GL_m(3)$ acts transitively on the set of 2-dimensional subspaces of $Y$ and $\GL_m(3)_{\langle u,v\rangle}$ induces $\GL_{2}(3)$ on $\langle u,v\rangle$, it follows that $\GG_0$ acts transitively on $B_3$.
Hence $B_3$ is a common orbit of $\GG_0$ and $M$, and hence $M$ is contained in the automorphism group of the orbital graph of $\GG$ arising from $B_3$. Thus $\mathcal{G}$ is not the automorphism group of either of its three nontrivial orbital digraphs. 

If $m=2$ we can check in \textsc{Magma} that $\GG$ is 2-closed, so assume that $m\geqslant 3$. Let $\mathcal{R}=V\rtimes M$. Then $\GG^{(2)}\leqslant \mathcal{R}^{(2)}$. Since $\GG_0$ is the stabiliser in $\mathcal{R}_0$ of $B_1$, to prove  that $\GG^{(2)}=\GG$, it suffices to show that $\mathcal{R}=\mathcal{R}^{(2)}$. As noted above, $\mathcal{R}$ has rank three and the orbits of $\mathcal{R}_0$ are $\{0\}$, $B_1\cup B_2$ and $B_3$. Thus $\mathcal{R}^{(2)}$ also has rank 3. Suppose, for a contradiction,  that we can identify $V$ with a cartesian product $V=\Delta^k$ for some set $\Delta$ and integer $k\geqslant 2$ such that $\mathcal{R}^{(2)}\leqslant \Sym(\Delta)\wr S_k$ in product action. Since $\mathcal{R}^{(2)}$ has rank three, we must have that $k=2$, (see for example \cite{LSrank3}) and $\mathcal{R}^{(2)}=\Sym(\Delta)\wr S_2$. Moreover, the orbits of $(\mathcal{R}^{(2)})_0$ have length 1, $2(|\Delta|-1)$ and $(|\Delta|-1)^2$. Since $|V|=3^{2m}$ we must have that $|\Delta|=3^m$. However, the orbits of $\mathcal{R}_0$ and hence of $(\mathcal{R}^{(2)})_0$ have length 1, $4(3^m-1)=4(|\Delta|-1)$ and $3^{2m}-4(3^m-1)-1=(|\Delta|-3)(|\Delta|-1)$, a contradiction as $|\Delta|\neq 5$. Thus $\mathcal{R}^{(2)}\nleqslant \Sym(\Delta)\wr S_k$ and so by \cite[Lemma~4.1]{PraegerSaxl} it follows that $\mathcal{R}^{(2)}\leqslant \AGL_{2m}(3)$. Now \cite[Lemma 4.4.5(1)]{XPLG}  implies that $(\mathcal{R}_0)^{(1)}=\mathcal{R}_0$, as $\mathcal{R}_0$ is the stabiliser in $\GL_{2m}(3)$ of a tensor product decomposition. Thus $\mathcal{R}^{(2)}=\mathcal{R}$ and the result follows.
\end{proof}

\begin{lem}\label{family3}
Let $m\geqslant 2$ and let $X$ and $Y$ be $2$- and $m$-dimensional $\GF(4)$-spaces, respectively. Let  $C_3\wr S_2$ be the subgroup of $\GL_{2}(4)$ stabilising a decomposition of $X$ into the direct sum of two $1$-dimensional subspaces  and let $((C_3\wr S_2) \circ \GL_{m}(4)).2\leqslant\GL_{2m}(4)$ act on $V=X\otimes Y$.  Let $\mathcal{H}(m)=V\rtimes (((C_3\wr S_2)\circ \GL_{m}(4)).2)$ be the permutation group with $V$ as a regular normal subgroup. Then $\mathcal{H}(m)$  is a $2$-closed primitive affine permutation group of degree $4^{2m}$ and of rank $4$. Moreover, $\mathcal{H}(m)$ is not the automorphism group of any graph or digraph.
\end{lem}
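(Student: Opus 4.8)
The plan is to follow the proof of Lemma~\ref{family2}, the new feature being that over $\GF(4)$ the nonzero pure tensors split into two $\mathcal H_0$-orbits of genuinely different shape. Fix a basis $x_1,x_2$ of $X$ and put $V_1=\langle x_1\rangle\otimes Y$, $V_2=\langle x_2\rangle\otimes Y$ and $W_\lambda=\langle x_1+\lambda x_2\rangle\otimes Y$ for $\lambda\in\GF(4)^*=\{1,\omega,\omega^2\}$: these are the five $m$-dimensional $\GF(4)$-subspaces $\ell\otimes Y$, one for each point $\ell$ of $\mathrm{PG}(1,4)$, and any two of them are complementary in $V$. Set $B_1=(V_1\cup V_2)\setminus\{0\}$, $B_2=(W_1\cup W_\omega\cup W_{\omega^2})\setminus\{0\}$ and $B_3=V^*\setminus(B_1\cup B_2)$, the set of non-pure tensors. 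Using that $\GL_m(4)$ is transitive on $Y\setminus\{0\}$ and on ordered pairs of independent vectors of $Y$, and that the monomial subgroup $C_3\wr S_2\leqslant\GL_2(4)$ together with the Frobenius $\gamma$ of $\GF(4)$ has exactly the two orbits $\{V_1,V_2\}$ and $\{W_1,W_\omega,W_{\omega^2}\}$ on the five subspaces $\ell\otimes Y$, one checks that $\mathcal H_0$ has precisely the three orbits $B_1,B_2,B_3$ on $V^*$, so $\mathcal H(m)$ is affine of degree $4^{2m}$ and rank $4$. As the characteristic is $2$, all orbitals are self-paired, so the three non-trivial orbital digraphs are the graphs $\Gamma_i=\Cay(V,B_i)$; each of $B_1,B_2,B_3$ generates $V$ (for instance $V_1+V_2=V$, $W_1+W_\omega=V$, and $\langle B_3\rangle=V$ follows by taking differences of suitable rank-$2$ matrices), so each $\Gamma_i$ is connected and $\mathcal H(m)$ is primitive by Proposition~\ref{primitive-orbital}.

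I would then bound $\Aut(\Gamma_i)$ from below for each $i$. By Lemma~\ref{lem:hamming} applied to $V=V_1\oplus V_2$ over $\GF(2)$ (so that $|V_1|=4^m$), the graph $\Gamma_1$ is the Hamming graph $H(2,4^m)$, so $\Aut(\Gamma_1)=S_{4^m}\wr S_2$, which is far larger than $\mathcal H(m)$. Identifying $V$ with the space of $2\times m$ matrices over $\GF(4)$, the set $B_1\cup B_2$ consists of the rank-$1$ matrices and $B_3$ of the rank-$2$ matrices, so $\overline{\Gamma_3}=\Cay(V,B_1\cup B_2)$ is the bilinear forms graph $\mathrm{Bil}_4(2,m)$; as $2\neq m$, its automorphism group is $\mathcal R:=V\rtimes\bigl((\GL_2(4)\circ\GL_m(4)).2\bigr)$ (see \cite[Section~9.5]{Brouwer-Book}), and $\mathcal R$ contains $\mathcal H(m)$ properly. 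Finally, let $L$ be the subgroup of $\GL_{4m}(2)$ fixing each of $W_1$, $W_\omega$, $W_{\omega^2}$ setwise; then $V\rtimes L\leqslant\Aut(\Gamma_2)$, and $L\cong\GL_{2m}(2)$, since an element of $L$ is determined by its restriction to $W_1$ (because $V=W_1\oplus W_\omega$ and $W_{\omega^2}$ is the graph of an isomorphism $W_1\to W_\omega$), and every such restriction occurs. As $|\GL_{2m}(2)|>|\mathcal H_0|$ for $m\geqslant 2$, we conclude $|\Aut(\Gamma_2)|>|\mathcal H(m)|$.

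The two assertions of the lemma now follow. Any graph or digraph admitting $\mathcal H(m)$ is a generalised orbital digraph, hence --- the characteristic being $2$ --- is one of $\Gamma_1,\Gamma_2,\Gamma_3$, a union of two of them (the complement of the third), or the empty or complete graph; its automorphism group is therefore $\Sym(V)$ or some $\Aut(\Gamma_i)$, and by the previous paragraph none of these equals $\mathcal H(m)$. For $2$-closure: the case $m=2$ is verified in \textsc{Magma}, so suppose $m\geqslant 3$. Since $\mathcal H(m)^{(2)}$ preserves every orbital of $\mathcal H(m)$, it preserves $B_1\cup B_2$, so $\mathcal H(m)^{(2)}\leqslant\Aut(\overline{\Gamma_3})=\mathcal R=V\rtimes\hat M$ with $\hat M=(\GL_2(4)\circ\GL_m(4)).2$; write $\mathcal H(m)^{(2)}=V\rtimes K$ with $\mathcal H_0\leqslant K\leqslant\hat M$. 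As $K$ also preserves $B_1$, and a $\GF(4)$-space is never a union of two proper subspaces, $V_1$ and $V_2$ are the only maximal $\GF(4)$-subspaces contained in $B_1\cup\{0\}$, so $K$ stabilises $\{V_1,V_2\}$; but the stabiliser of $\{\langle x_1\rangle\otimes Y,\langle x_2\rangle\otimes Y\}$ in $\hat M$ is exactly $((C_3\wr S_2)\circ\GL_m(4)).2=\mathcal H_0$. Hence $K=\mathcal H_0$ and $\mathcal H(m)^{(2)}=\mathcal H(m)$.

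The main difficulty is the treatment of $\Gamma_2$: in contrast to the $\GF(3)$-case of Lemma~\ref{family2}, it is not a Hamming graph --- it arises from the three subspaces $W_1,W_\omega,W_{\omega^2}$ rather than from two --- and its large automorphism group comes precisely from the fact that, regarded over $\GF(2)$, this triple of subspaces no longer remembers the $\GF(4)$-scalars. A secondary point is to invoke the automorphism group of the bilinear forms graph $\mathrm{Bil}_4(2,m)$ correctly, which is legitimate here only because $2\neq m$, and it is this fact that pins the $2$-closure down.
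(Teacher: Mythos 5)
Your proof is correct, and it agrees with the paper's on the orbit computation, on $\Gamma_1\cong H(2,4^m)$, and in substance on $\Gamma_2$: your subgroup $L\cong\GL_{2m}(2)$ fixing each of $W_1,W_\omega,W_{\omega^2}$ is exactly the $1\otimes\GL_{2m}(2)$ factor of the group $\GL_2(2)\otimes\GL_{2m}(2)$ that the paper exhibits by recognising $B_2$ as the simple vectors of the $\GF(2)$-tensor decomposition $V=\langle u_1,u_2\rangle_{\GF(2)}\otimes_{\GF(2)}W$ (where $u_1=x_1+x_2$, $u_2=\lambda x_1+\lambda^2x_2$ span a $2$-dimensional $\GF(2)$-space), so the same phenomenon is being used, just packaged differently. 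Where you genuinely diverge is the $2$-closure. The paper first applies the Praeger--Saxl closure theorem to the maximal subgroup $\mathcal R=V\rtimes M$ of $\mathrm{A}\GammaL_{2m}(4)$ to get $\mathcal H^{(2)}\leqslant\AGL_{4m}(2)$, then uses the invariance of the simple vectors of the $\GF(2)$-tensor decomposition together with \cite[Lemma 4.4.5]{XPLG} to force $(\mathcal H^{(2)})_0\leqslant\GL_2(2)\times\GL_{2m}(2)$, and finally Dye's maximality of $\GammaL_m(4)$ in $\GL_{2m}(2)$ plus an orbit count to conclude $(\mathcal H^{(2)})_0=\mathcal H_0$. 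You instead identify $\overline{\Gamma_3}$ as the bilinear forms graph $\mathrm{Bil}_4(2,m)$, quote its full automorphism group $V\rtimes\bigl((\GL_2(4)\circ\GL_m(4)).2\bigr)$ (legitimate for $m\geqslant 3$ via Hua's theorem as recorded in \cite[Section~9.5]{Brouwer-Book}), and then pin down $(\mathcal H^{(2)})_0$ by the elementary observation that $V_1,V_2$ are the only maximal $\GF(4)$-subspaces inside $B_1\cup\{0\}$, so that preserving $B_1$ forces membership in the stabiliser of $\{V_1,V_2\}$, which is $\mathcal H_0$. Your route is shorter and more self-contained within the geometry of $V$, at the price of making the exact automorphism group of $\mathrm{Bil}_4(2,m)$ the single load-bearing citation; the paper's route avoids needing that exact group by working only with upper bounds on $2$-closures, which is why it can lean on the softer closure machinery. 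Both handle $m=2$ by machine computation, and both reductions of ``not the automorphism group of any (di)graph'' to the three orbital graphs are the same.
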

\begin{proof} 
 For convenience, we set $\mathcal{H}=\mathcal{H}(m)$ and we let $C_3\wr S_2$ be the subgroup of $\GL_{2}(4)$ preserving the decomposition $X=\langle x_1\rangle\oplus \langle x_2\rangle$. Then $\mathcal{H}_0=((C_3\wr S_2)\circ\GL_{m}(4)).2 \cong \GL_2(2)\times\GammaL_m(4)$. Let $M=(\GL_{2}(4)\circ\GL_{m}(4)).2$, which for $m\geqslant 3$ is the stabiliser in $\GammaL_{2m}(4)$ of the tensor decomposition $V=X\otimes Y$, and for $m=2$, $M$ is the index two subgroup of the stabiliser of the tensor decomposition that does not interchange $X$ and $Y$.

Let $V_1=\langle x_1\rangle\otimes Y$ and $V_2=\langle x_2\rangle\otimes Y$. Then $V=V_1\oplus V_2$ and $\dim(V_1)=\dim(V_2)=m$. Moreover, $\mathcal{H}_0$ is the stabiliser in $M$ of this decomposition $V=V_1\oplus V_2$.  Let \[B_1=(V_1\cup V_2)\setminus\{0\}\quad\quad  B_2=(\langle x_1+x_2\rangle\otimes Y\cup \langle x_1+\lambda x_2\rangle\otimes Y\cup \langle x_1+\lambda^2 x_2\rangle\otimes Y )\setminus\{0\},\]
where $\lambda$ is a primitive element of $\GF(4)$,
and let $B_3=V^*\backslash(B_1\cup B_2)$.  By \cite[Lemma~1.1]{Liebeck-affine}, $B_1\cup B_2$ and $B_3$ are the two orbits of $M$ on $V^*$. It is also easy to see that $B_1$ and $B_2$ are $\mathcal{H}_0$-orbits. Since $V=V_1\oplus V_2$, Lemma \ref{lem:hamming} implies that the orbital digraph for $\GG$ arising from $B_1$ is the Hamming graph $H(2,4^m)$. Note that the automorphism group of $H(2,4^m)$ is $S_{4^m}\wr S_2$, which is larger than $\mathcal{H}_0$.

Now the vectors of $V\backslash (B_1\cup B_2)$ are of the form $\lambda_1x_1\otimes u+\lambda_2x_2\otimes v$ where $\lambda_1,\lambda_2\neq 0$ and $\langle u,v\rangle$ is a 2-dimensional subspace of $Y$. Since $\GL_{m}(4)$ acts transitively on the set of 2-dimensional subspaces of $Y$ and $\GL_{m}(4)_{\langle u,v\rangle}$ induces $\GL_{2}(4)$ on $\langle u,v\rangle$, it follows that $\mathcal{H}_0$ acts transitively on $B_3$.
Hence $B_3$ is a common orbit of $\mathcal{H}_0$ and $M$. In particular, $\mathcal{H}_0$ has rank 4 and  $M$ is contained in the automorphism group of the orbital graph of $\mathcal{H}$ arising from $B_3$.

Note that $Y$ is a $2m$-dimensional vector space over $\GF(2)$. Let $u_1=x_1+x_2$, $u_2=\lambda x_1+\lambda^2x_2$ and $u_3=\lambda^2x_1+\lambda x_2$. Then $u_2\in\langle x_1+\lambda x_2\rangle$ and $u_3\in\langle x_1+\lambda^2x_2\rangle$. Moreover, $u_1+u_2=u_3$. Thus $\langle u_1,u_2,u_3\rangle_{\GF(2)}$ has dimension two. Note that for each $w\in Y$ and $i\in\{1,2,3\}$, we have that $\lambda u_i\otimes w=u_i\otimes (\lambda w)$. Hence for each $i\in\{1,2,3\}$ we have that  $\langle u_i\rangle_{\GF(2)}\otimes_{\GF(2)} W$ is a $2m$-dimensional vector space over $\GF(2)$ which  contains the same set of vectors as the $\GF(4)$-space $\langle u_i\rangle_{\GF(4)}\otimes_{\GF(4)} W$. Thus $B_2$ is the set of simple vectors of the tensor decomposition $V=\langle u_1,u_2\rangle_{\GF(2)}\otimes_{\GF(2)} W$. This decomposition is preserved by $\GL_{2}(2)\otimes \GL_{2m}(2)$ and so $\GL_{2}(2)\otimes\GL_{2m}(2)\leqslant \Aut(\Gamma_2)_0$.
Thus $\mathcal{H}$ is not the automorphism group of either of its three nontrivial orbital digraphs. 

When $m=2$ we can check in \textsc{Magma} that $\mathcal{H}$ is 2-closed, so suppose that $m\geqslant 3$.  Let $\mathcal{R}=V\rtimes M\geqslant \mathcal{H}$ and so $\mathcal{H}^{(2)}\leqslant \mathcal{R}^{(2)}$. As noted above, $\mathcal{R}$ has rank three and the orbits of $\mathcal{R}_0$ are $\{0\}$, $B_1\cup B_2$ and $B_3$. Thus $\mathcal{R}^{(2)}$ also has rank 3. The same argument as in the last paragraph of the proof of Lemma \ref{family2} comparing the orbit lengths of $\mathcal{R}_0$ to the orbits lengths of a rank three group in product action allows us to again deduce that $\mathcal{R}^{(2)}\nleqslant \Sym(\Delta)\wr S_k$ in product action with $V$ identified with $\Delta^k$ for some set $\Delta$ and $k\geqslant 2$. Thus \cite[Lemma~4.1]{PraegerSaxl} implies that $\mathcal{R}^{(2)}$ has the same socle as $\mathcal{R}$, that is, an elementary abelian group of order $2^{4m}$. Hence $\mathcal{R}^{(2)}\leqslant \AGL_{4m}(2)$.  Note that $\mathcal{H}_0$, and hence $(\mathcal{H}^{(2)})_0$, fix setwise $B_2$, the set of simple vectors of the tensor product decomposition $V=U\otimes W$ where $U$ is a 2-dimensional vector space over $\GF(2)$ and $W$ is a $2m$-dimensional vector space over $\GF(2)$. Thus by \cite[Lemma 4.4.5(1)]{XPLG} we have that $(\mathcal{H}^{(2)})_0\leqslant \GL_{2}(2)\times \GL_{2m}(2)$. By \cite{Dye}, $\GammaL_{m}(4)$ is a maximal subgroup of $\GL_{2m}(2)$. Hence $\mathcal{H}_0=\GL_{2}(2)\times\GammaL_{m}(4)$ is maximal in $\GL_{2}(2)\times \GL_{2m}(2)$. Since $\mathcal{H}_0$ has three orbits on $V\backslash\{0\}$ while $\GL_{2}(2)\times \GL_{2m}(2)$ has only two, it follows that $(\mathcal{H}^{(2)})_0=\mathcal{H}_0$ and so $\mathcal{H}$ is 2-closed.
\end{proof}

\subsection{$1$-dimensional semilinear}\label{sec:eg1diml}

Let $\omega$ be a primitive element of the field GF$(p^d)$ of order $p^d$, and let $\a: x\mapsto x^p$ be a generator of $\Aut(\GF(p^d))$. Then $\GammaL_1(p^d)=\langle\omega\rangle\rtimes\langle\a\rangle$. Note that $\GF(p^d)$ is a $d$-dimensional vector space over $\GF(p)$ and we can view $\GammaL_1(p^d)$ as a subgroup of $\GL_d(p)$. 

By \cite[Lemma~2.1]{Foulser-Kallaher-solvable-rank-3}, any subgroup $H$ of  $\GammaL_1(p^d)$ can be written uniquely in the form
 $H=\langle\o^m,\o^e\a^s\rangle$,
where the integers $m$, $e$ and $s$ satisfy the following conditions:
\begin{equation}\label{standard-condition}
m\ |\ p^d-1,\quad \quad s \mid d, \quad \quad e\left (\frac{p^d-1}{p^s-1}\right )\equiv0\ \pmod m.
\end{equation}
A subgroup $\langle\o^m,\o^e\a^s\rangle$ of $\langle\o,\a\rangle$ is said to be  in {\em standard form} if
$m$, $e$ and $s$ satisfy the conditions in Eq.~\eqref{standard-condition}.

The following proposition is directly obtained from {\rm\cite[Theorems~3.7, 3.8]{Foulser-Kallaher-solvable-rank-3}}. We let $V^*=\GF(p^d)\backslash\{0\}$.

\begin{prop}\label{lem-r3}
Let $H=\langle\o^m,\o^e\a^s\rangle$ (in standard form) have exactly two orbits on $V^*$, of lengths $\frac{m_1(p^d-1)}{m}$ and $\frac{m_2(p^d-1)}{m}$, where $m_1<m_2$. Then $m=vm_1$, $m_2=(v-1)m_1$, and $m_1, s$ are odd, and the following hold.
\begin{enumerate}
  \item [{\rm (1)}]\ each prime divisor of $m_1$ divides $p^s-1$;
  \item [{\rm (2)}]\ $v$ is an odd prime, and $v\mid (p^{sm_1(v-1)}-1)$ but $v\nmid (p^{sm_1t}-1)$ for $1\leqslant t<v-1$;
  \item [{\rm (3)}]\ {\rm gcd}$(e,m_1)=1$;
  \item [{\rm (4)}]\ $m_1s(v-1)$ divides $d$.
  \end{enumerate}
Conversely, assume that $e$ is an integer, and that $m_1$, $s$ and $d$ are positive integers satisfying the above conditions $(1)$--$(4)$. Let $m=vm_1$. Then the group $H=\langle \o^m,\o^e\a^s\rangle$ is in standard form and has exactly two orbits on $V^*$, of lengths $\frac{m_1(p^d-1)}{m}$ and $\frac{m_2(p^d-1)}{m}$.
\end{prop}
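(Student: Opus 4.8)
The plan is to translate the problem into an elementary question about the orbits of a single affine permutation of $\mz_m$, settle the parts that then follow formally, and invoke the Foulser--Kallaher analysis for the number-theoretic core. For the translation, write $q=p^s$ and identify $V^*=\GF(p^d)\setminus\{0\}$ with the cyclic group $\langle\o\rangle$ of order $p^d-1$, on which $\langle\o\rangle$ acts regularly by multiplication and $\a$ acts by $x\mapsto x^p$. First I would check that $N:=\langle\o^m\rangle$ is normal in $H$: it is obviously normalised by $\o^m$, and $\o^e\a^s$ normalises it because $\a^s\o^m\a^{-s}=\o^{mp^s}$ still generates $\langle\o^m\rangle$ (as $\gcd(p^s,p^d-1)=1$). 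Hence the $N$-orbits on $V^*$ are the $m$ cosets of $N$, each of size $(p^d-1)/m$, and $H$ permutes them. Identifying the cosets with $\mz_m$ so that multiplication by $\o$ becomes $j\mapsto j+1$ and $\a$ becomes $j\mapsto pj$, the element $\o^e\a^s$ induces the affine permutation $\ph\colon j\mapsto qj+e$ of $\mz_m$ (a permutation since $\gcd(q,m)=1$), while $N$ acts trivially, so $H$ induces the cyclic group $\langle\ph\rangle$ on $\mz_m$. Since $N$ is transitive on each of its orbits and $N\norml H$, the $H$-orbits on $V^*$ are in bijection with the $\langle\ph\rangle$-orbits on $\mz_m$, an orbit of size $k$ on $\mz_m$ lifting to an $H$-orbit of size $k(p^d-1)/m$. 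Thus the proposition is equivalent to the assertion that $\langle\ph\rangle$ has exactly two orbits on $\mz_m$, of sizes $m_1<m_2$, if and only if $m=vm_1$, $m_2=(v-1)m_1$ and $(1)$--$(4)$ hold.

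Two of the conditions then come for free. The standard-form congruence $e\frac{p^d-1}{p^s-1}\equiv0\pmod m$ says exactly that $\ph^{d/s}=\mathrm{id}$ (using $p^d\equiv1\pmod m$), so every orbit size of $\ph$, in particular $m_1$ and $m_2=(v-1)m_1$, divides $d/s$; this is condition $(4)$. And once condition $(2)$ is in force, $p^{sm_1}$ has multiplicative order exactly $v-1$ modulo $v$, forcing $\gcd(sm_1,v-1)=1$; since $v$ is an odd prime, $v-1$ is even, so $sm_1$, hence each of $s$ and $m_1$, is odd -- this is the parity assertion.

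For the forward implication I would split according to whether $\ph$ has a fixed point on $\mz_m$, equivalently whether $\gcd(q-1,m)\mid e$. If it does, then after replacing $H$ by a conjugate under $\langle\o\rangle$ -- a permutationally isomorphic group, for which $e$ is replaced by a suitable element of the coset $e+(q-1)\mz_m$ -- I may assume $e=0$, so that $\ph$ is multiplication by $q$; then $\{0\}$ is an orbit, so $m_1=1$, and the fact that $\mz_m\setminus\{0\}$ is a single $\langle q\rangle$-orbit forces $m=v$ to be prime with $q$ a primitive root modulo $v$, giving all the conclusions. If $\ph$ is fixed-point-free, I would set $r=\mathrm{ord}_m(q)$, note that $\ph^r$ is the translation by $c:=e\frac{q^r-1}{q-1}\bmod m$, and check that the $\langle\ph\rangle$-orbits on $\mz_m$ are in bijection with the orbits of the induced affine map $\overline{\ph}$ on $\mz_m/\langle c\rangle$ (this uses $\gcd(q,m)=1$, which makes $\ph$ permute the cosets of $\langle c\rangle$). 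Since there are exactly two, the crux is that one of them is a fixed point of $\overline{\ph}$; granting this, $\mz_m/\langle c\rangle$ has prime order $v$, whence $m_1=m/v=|\langle c\rangle|$, $q\bmod v$ is a primitive root, and tracking the relevant multiplicative orders and greatest common divisors yields $m_2=(v-1)m_1$ and conditions $(1)$--$(3)$ -- here $(1)$ records that every prime divisor of $m$ other than $v$ divides $q-1$, and $(3)$ that $c$ is a generator of $\langle c\rangle$.

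For the converse, given $e,m_1,s,d$ satisfying $(1)$--$(4)$ and putting $m=vm_1$, I would first verify that $H$ is in standard form: $\gcd(m_1,v)=1$ follows from $(1)$ and $(2)$ (a common prime divisor would divide $q-1$ by $(1)$, contradicting the order statement in $(2)$ as $v$ is odd), and then $(4)$ together with a lifting-the-exponent computation gives $m\mid p^d-1$, $s\mid d$ and $e\frac{p^d-1}{p^s-1}\equiv0\pmod m$. I would then run the computation of the previous paragraph in reverse: conditions $(1)$--$(4)$ are precisely what is needed to make $\langle c\rangle$ the subgroup of order $m_1$ of $\mz_m$ and $\overline{\ph}$ a map on $\mz_m/\langle c\rangle\cong\mz_v$ whose linear part $q\bmod v$ is a primitive root modulo the prime $v$, so that $\overline{\ph}$ has a fixed point together with one further orbit of size $v-1$; these lift to $\langle\ph\rangle$-orbits of sizes $m_1$ and $(v-1)m_1$ on $\mz_m$, hence, through the bijection of the first paragraph, to $H$-orbits on $V^*$ of lengths $m_1(p^d-1)/m$ and $m_2(p^d-1)/m$. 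The step I expect to be the main obstacle is the fixed-point-free case of the forward implication -- extracting from ``$\ph$ has exactly two orbits'' that $v=m/m_1$ is a single prime with $p^{sm_1}$ of multiplicative order precisely $v-1$ modulo $v$, rather than $m/m_1$ being a product of several primes arranged in a tower of successive quotients. This delicate bookkeeping with $\mathrm{ord}_m(q)$, $\mathrm{ord}_v(q)$ and the numbers $\gcd(q^k-1,m)$ is exactly what is carried out in \cite[Theorems~3.7 and~3.8]{Foulser-Kallaher-solvable-rank-3}; accordingly, in the paper I would present the reduction of the first paragraph explicitly and then cite those theorems for both directions.
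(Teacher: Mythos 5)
Your proposal is correct and ultimately takes the same route as the paper: the paper gives no argument at all, stating that the proposition ``is directly obtained from'' Theorems~3.7 and~3.8 of \cite{Foulser-Kallaher-solvable-rank-3}, and you likewise defer the number-theoretic core (the fixed-point-free case of the forward direction and the converse) to those same theorems. Your explicit reduction to the orbits of the affine map $j\mapsto qj+e$ on $\mz_m$, and your derivations of condition $(4)$ and of the parity of $s$ and $m_1$ from the standard-form congruence and condition $(2)$, are sound and make the citation more transparent, but they do not constitute a genuinely different proof.
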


\begin{defi}\label{con}
Let $H=\langle \o^m, \o^e\a^s\rangle\leqslant\GammaL_1(p^d)$ be in standard form with $m=3m_1$ for some odd integer $m_1$ with $\gcd(m_1,3)=1$ and so that $m_1,s$ and $d$ satisfy the conditions $(1)$--$(4)$ of Proposition \ref{lem-r3} with $v=3$.

Let $G_0=\langle \o^{3m_1}, (\o^e\a^s)^2\rangle$ and let $G=N:G_0$, where $N$ is the group of translations of the 1-dimensional
vector space over ${\rm GF}(p^d)$. Let $G(q,m_1,e,s)=G^{(2)}$.
\end{defi}

By a direct computation, we have $(\o^e\a^s)^2=\o^{e(1+p^{d-s})}\a^{2s}$.  Note that (2) of Proposition~\ref{lem-r3}) with $v=3$ implies that $s$ is odd and (4) implies that $2s$ divides $d$. Moreover, $\gcd(m_1, p^s+1)=1$ (by (1) of Proposition~\ref{lem-r3} and $p\equiv -1\ \pmod 3$ (by (2) of Proposition~\ref{lem-r3}). Then $p^{d-s}\equiv -1\ \pmod 3$ and then  $e(1+p^{d-s})(\frac{p^d-1}{p^{2s}-1})\equiv 0\ \pmod {3m_1}$. Hence the standard form of $G_0$ is $\langle  \o^{3m_1}, \o^{e(1+p^{d-s})}\a^{2s}\rangle$.

\begin{lem}\label{family1}
The group $G(p^d,m_1,e,s)$ is primitive and is not the automorphism group of any graph or digraph of order $p^d$.
\end{lem}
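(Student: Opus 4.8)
The plan is to prove primitivity by identifying the three nontrivial suborbits of $G$ explicitly, and then to show that $G(p^d,m_1,e,s)=G^{(2)}$ is not an automorphism group by producing, for each of its generalised orbital digraphs, an automorphism that lies in $\GammaL_1(p^d)$ but not in $G^{(2)}$.

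For primitivity, the key structural claim I would establish is that the three nontrivial suborbits of $G$ are precisely the three cosets of $U:=\langle\o^3\rangle$ in $V^*=\GF(p^d)\setminus\{0\}=\langle\o\rangle$. (Note $3\mid p^d-1$, since $p\equiv-1\pmod 3$ and $d$ is even, as observed after Definition~\ref{con}, so $U$ has index $3$.) First, $\GammaL_1(p^d)$ preserves the partition $\mathcal P=\{U,\o U,\o^2U\}$: multiplication by $\o$ permutes its parts in a $3$-cycle, and the Frobenius $\a$ permutes them since $\gcd(p,|U|)=1$ forces $U^p=U$; moreover the induced homomorphism $\GammaL_1(p^d)\to\Sym(\mathcal P)$ is onto $S_3$. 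As $s$ is odd and $p^s\equiv-1\pmod 3$, the element $\o^e\a^s$ induces a transposition on $\mathcal P$ while $\o^{3m_1}$ acts trivially, so the image of $H=\langle\o^{3m_1},\o^e\a^s\rangle$ in $\Sym(\mathcal P)$ has order $2$: thus $H$ fixes one part $B_1$ and interchanges the other two, $B_2$ and $B_3$. By Proposition~\ref{lem-r3} (with $v=3$), $H$ has exactly two orbits on $V^*$, and since $H$ fixes $B_1$ and is transitive on $\{B_2,B_3\}$ these orbits must be $B_1$ and $B_2\cup B_3$. Now $G_0=\langle\o^{3m_1},(\o^e\a^s)^2\rangle$ is exactly the kernel of the $H$-action on $\mathcal P$, so it fixes each $B_i$ setwise; and passing to the cyclic quotient $H/\langle\o^{3m_1}\rangle\cong\mz_{d/s}$, the hypotheses that $m_1$ is odd and $2sm_1\mid d$ show $G_0$ is transitive on each $B_i$ (the stabiliser of a point of $B_1$ has odd index $m_1$ in $H/\langle\o^{3m_1}\rangle$, hence is not contained in the index-$2$ subgroup $G_0/\langle\o^{3m_1}\rangle$; a similar computation handles $B_2,B_3$). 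Hence $G_0$ has exactly three orbits on $V^*$, namely $B_1,B_2,B_3$, each of length $\tfrac{p^d-1}{3}$, so (assuming $G_0\ne1$) $G=N:G_0$ is a non-regular $\tfrac32$-transitive group whose point stabiliser has three orbits on $V^*$; Lemma~\ref{lem-3/2} then gives that $G$ is primitive, and therefore so is $G^{(2)}$.

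For the second assertion, write $O_1,O_2,O_3$ for the three suborbits (that is, $B_1,B_2,B_3$) and $\Gamma_i=\Cay(V,O_i)$ for the corresponding orbital digraphs. Since $G^{(2)}$ has the same orbitals as $G$, any graph or digraph admitting $G^{(2)}$ is of the form $\Gamma_S:=\Cay\big(V,\bigcup_{i\in S}O_i\big)$ for some $S\subseteq\{1,2,3\}$, so it suffices to show $\Aut(\Gamma_S)\neq G^{(2)}$ for every such $S$. If $S=\varnothing$ or $S=\{1,2,3\}$ then $\Gamma_S$ is null or complete and $\Aut(\Gamma_S)=\Sym(V)$, which is $2$-transitive whereas $G^{(2)}$ has rank $4$. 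For a proper non-empty $S$, the set $\bigcup_{i\in S}O_i$ is a proper non-empty union of parts of $\mathcal P$, so some transposition of $\Sym(\mathcal P)$ fixes it setwise; lifting that transposition through the surjection $\GammaL_1(p^d)\to\Sym(\mathcal P)$ gives $g\in\GammaL_1(p^d)\leqslant\GL_d(p)$ fixing $\bigcup_{i\in S}O_i$ setwise. As $g$ is an additive bijection of $V$ preserving the connection set of $\Gamma_S$, we get $g\in\Aut(\Gamma_S)$; but $g$ fixes $0$ and does not preserve each $O_i$, whereas the stabiliser $(G^{(2)})_0$ has the same orbits on $V$ as $G_0$ — namely $\{0\},O_1,O_2,O_3$ — because $G^{(2)}$ and $G$ have the same orbits on $V\times V$. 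Hence $g\notin G^{(2)}$, so $\Aut(\Gamma_S)\supsetneq G^{(2)}$, and $G(p^d,m_1,e,s)$ is not the automorphism group of any graph or digraph.

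I expect the main obstacle to be the structural identification of the three suborbits of $G$ with the three cosets of $\langle\o^3\rangle$: once it is in hand, primitivity is immediate from Lemma~\ref{lem-3/2}, and every proper non-trivial generalised orbital digraph automatically inherits a symmetry from the copy of $S_3$ acting through $\GammaL_1(p^d)$, which cannot lie in $G^{(2)}$ since $(G^{(2)})_0$ respects the suborbits. Proving this identification is exactly where all the hypotheses of Definition~\ref{con} enter: Proposition~\ref{lem-r3} controls the two $H$-orbits, the definition of $G_0$ as a square makes it the block-kernel, and the parity condition ``$m_1$ odd'' (with $2sm_1\mid d$) is precisely what rules out $G_0$ being intransitive on $B_1$, thereby guaranteeing rank $4$ and $\tfrac32$-transitivity. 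One should also dispose of the degenerate possibility $G_0=1$ (which would make $G$ regular and hence imprimitive); a short check shows this occurs only for $p^d=4$, which is excluded.
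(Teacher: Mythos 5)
Your proof is correct, and while it ends in the same place as the paper's (three equal suborbits of length $\tfrac{p^d-1}{3}$, then Lemma~\ref{lem-3/2}), the route is genuinely different in both halves. For primitivity, the paper never identifies the suborbits explicitly: it quotes the proof of \cite[Proposition~3.3]{Foulser-Kallaher-solvable-rank-3} to see that $\o^e\a^s$ acts on the $3m_1$ cosets of $\langle\o^{3m_1}\rangle$ as a product of an $m_1$-cycle and a $2m_1$-cycle, squares this permutation (using $m_1$ odd) to obtain three orbits of equal length, and then applies Lemma~\ref{lem-3/2}. Your identification of the suborbits with the three cosets of $\langle\o^3\rangle$ --- via the surjection $\GammaL_1(p^d)\to \mathrm{Sym}(\mathcal{P})\cong S_3$, the fact that $G_0$ is the kernel of $H$ on $\mathcal{P}$, and the index-parity argument in the cyclic quotient $H/\langle\o^{3m_1}\rangle$ --- is a sharper structural statement and avoids citing the internals of Foulser--Kallaher; just phrase the transitivity step in terms of the stabiliser of a $\langle\o^{3m_1}\rangle$-coset (a block), not of a point, since the point stabiliser does not have index $m_1$. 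For the second assertion the paper constructs the index-$3$ overgroup $\langle\o^{m_1},(\o^e\a^s)^2\rangle$ of $G_0$, uses $\gcd(m_1,3)=1$ and Proposition~\ref{lem-r3} to show it permutes the three suborbits in a $3$-cycle, concludes that the three orbital digraphs are pairwise isomorphic, and relies on the reduction in Lemma~\ref{primitive} to single orbital digraphs; you instead lift a transposition of $\mathrm{Sym}(\mathcal{P})$ to a semilinear map that preserves any given proper non-empty union of suborbits while fusing two of them, which disposes of every generalised orbital digraph uniformly and without appeal to Lemma~\ref{primitive}. What each buys: the paper's argument additionally shows the three orbital digraphs are isomorphic (a stronger statement about the graphs themselves), while yours is more self-contained and makes visible exactly where each hypothesis of Definition~\ref{con} is used. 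Your closing remark that the degenerate case $G_0=1$ occurs only for $p^d=4$ is a point the paper passes over silently (Lemma~\ref{lem-3/2} assumes $H\neq 1$), and is worth retaining.
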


\begin{proof}  By Proposition~\ref{lem-r3}, $H$ has exactly two orbits on $V^*$, of lengths $\frac{(p^d-1)}{3}$ and $\frac{2(p^d-1)}{3}$. Note that each orbit of $\langle \o^{3m_1}\rangle$ on $V^*$ has length $\frac{p^d-1}{3m_1}$. From the proof of \cite[Proposition~3.3]{Foulser-Kallaher-solvable-rank-3}, one may see that $\o^e\a^s$ induces the permutation $\eta$ on the set of orbits of $\langle \o^{3m_1}\rangle$ on $V^*$ that is the product of two disjoint cycles of lengths $m_1$ and $2m_1$. Since $m_1$ is odd, $\eta^2$ is a product of three disjoint cycles of equal length $m_1$. It follows that $G_0=\langle \o^{3m_1}, (\o^e\a^s)^2\rangle$ has exactly three orbits on $V^*$, each of which has length $\frac{(p^d-1)}{3}$. By Lemma~\ref{lem-3/2}, $G$ is primitive and hence so is $G(p^d,m_1,e,s)$.

By a direct computation, we have $(\o^e\a^s)^2=\o^{e(1+p^{d-s})}\a^{2s}$, and $(\o^{m_1})^{(\o^e\a^s)^2}=(\o^{m_1})^{\a^{2s}}=\o^{m_1p^{2s}}$.
Since $3\mid (p^{2s}-1)$, we have $\o^{m_1p^{2s}}\langle\o^{3m_1}\rangle=\o^{m_1}\langle\o^{3m_1}\rangle$. This implies that the quotient group $\langle \o^{m_1}, (\o^e\a^s)^2\rangle/\langle\o^{3m_1}\rangle$ is abelian, and hence $G_0\unlhd\langle \o^{m_1}, (\o^e\a^s)^2\rangle$. Clearly, $G_0$ has index $3$ in $\langle \o^{m_1}, (\o^e\a^s)^2\rangle$. Hence either $\langle \o^{m_1}, (\o^e\a^s)^2\rangle$ fixes each $G_0$-orbit on $V^*$ or it cyclically permutes them in a cycle of length 3. Suppose, for a contradiction, that $\langle \o^{m_1}, (\o^e\a^s)^2\rangle$ fixes each $G_0$-orbit on $V^*$.
Then $\langle \o^{m_1}, \o^e\a^s\rangle$ would have the same orbits on $V^*$ as $\langle \o^{3m_1}, \o^e\a^s\rangle$, and hence $\langle \o^{m_1}, \o^e\a^s\rangle$ has exactly two orbits on $V^*$, of lengths $\frac{p^d-1}{3}$ and $\frac{2(p^d-1)}{3}$. Now $\langle \o^{m_1}, \o^e\a^s\rangle$ is also in standard form and so by Proposition \ref{lem-r3} we must have that $\frac{p^d-1}{3}=\frac{a_1(p^d-1)}{m_1}$ with $m_1=va_1$ for some prime $v$. It follows that $v=3$, contradicting $\gcd(3,m_1)=1$. Thus, $\langle \o^{m_1}, (\o^e\a^s)^2\rangle$ is transitive on the set of orbits of $G_0$ on $V^*$.
It follows that all the three non-trivial orbital digraphs of $G$ are isomorphic. In particular, the automorphism group of any of them contains a permutation that interchanges the other two. Hence $G(p^d,m_1,e,s)$ is not the automorphism group of any non-trivial orbital graph or digraph.
\end{proof}

We do not determine the explicit structure of $G(p^d,m_1,e,s)$ but note that all of its orbital digraphs have the same valency. This is not the case for the two examples $\mathcal{G}(m)$ and $\mathcal{H}(m)$ given Section \ref{sec:tensor}. Thus it follows from  Theorem \ref{main-theorem} that for $n\geqslant 2402$ we have that $G(p^d,m_1,e,s)\leqslant \AGammaL_1(p^d)$. This implies that there do exist examples of primitive groups of rank $4$ that are subgroups of $\AGammaL_1(p^d)$ and are not the automorphism group of a graph or digraph. However, we do not attempt a complete classification of such examples as it appears that the required arguments would be very number-theoretic. In the next subsection we provide some further small examples not of the form $G(p^d,m_1,e,s)$.

\subsection{Small degrees}\label{sec:smalleg}

The examples below are numbered according to the database of primitive groups in \textsc{Magma} and do not lie in any of the infinite families already given.

\begin{enumerate}
\item PrimitiveGroup(25,11). In this case $G=5^2:(D_{8}.2)$. Here  all three nontrivial orbital graphs  of $G$ are isomorphic to $H(2,5)$. The point stabiliser in $G$ is not isomorphic to a subgroup of $\GammaL(1,25)$, and the only proper rank 4 subgroup has point stabiliser $Q_8$, which is also not subgroup of $\GammaL(1,25)$. Thus $G$ is not of the form $G(25,m_1,e,s)$ for any $m_1,e$ and $s$.

\item PrimitiveGroup(49,16). Here $G=7^2:\langle \omega^4, \alpha \rangle$ has two nontrivial orbital graphs isomorphic to $H(2,7)$. The other orbital graph has automorphism group $7^2:\langle \omega^2, \alpha\rangle$. Since the orbital graphs are not all isomorphic $G$ is not of the form $G(49,m_1,e,s)$ for any $m_1,e$ and $s$.

\item PrimitiveGroup(64,27).  Here $G=2^6:(3^{1+2}_+:D_8)$ has two orbital graphs of valency 18 and one of valency 27. The first two have automorphism group $2^6:3.A_6.2$, which appears in Table \ref{tab-sub-3}. The orbital graph of valency 27 has automorphism group $2^6.\GO^-_6(2)$, which appears in case (A7) of Table \ref{tab-sub-1}.

\item PrimitiveGroup(81,48).  Here $G=3^4 : \langle \omega^4, \alpha\rangle$ has two orbital graphs of valency 20 and one of valency 40. The first two have automorphism group $3^4 : \mathrm{CO}^-_4(3)$, which appears in case (A7) of Table \ref{tab-sub-1}. The remaining one has automorphism group $3^4 : \langle \omega^2, \alpha \rangle$. Hence the orbital graphs are not all isomorphic and so $G$ is not of the form $G(81,m_1,e,s)$ for any $m_1,e$ and $s$.  

\item PrimitiveGroup(81,77). Here $G=3^4:(2\times Q_8):A_4$ and has two orbital graphs of valency 16 and one of valency 48. The first two are Hamming graphs $H(2,9)$, while the latter has automorphism group $3^4:2^{1+4}.\GO^+_4(2)$ which is a rank 3 group in Table \ref{tab-sub-2}.

\item PrimitiveGroup(81,87). Here $G=3^4: (\GL(1,3) \wr D_8).2$. Here $G$ has one orbital graph of valency 16 and two of valency 32. The first is isomorphic to $H(2,9)$ while the automorphism groups of the other two are isomorphic to $3^4:2^{1+4}.\GO^+_4(2)$, which is a rank 3 group in Table \ref{tab-sub-2}.

\item PrimitiveGroup(121,23). Here $G=11^2:(5\times Q_8) $ with all nontrivial suborbits having length 40. The stabiliser in $G$ of a point is not cyclic and $G$  does not have a proper rank 4 subgroup. Thus $G$ is not of the form $G(121,m_1,e,s)$ for any $m_1, e$ or $s$, as all such groups for $q=121$ are the 2-closure of a primitive group of rank 4 having cyclic point stabilisers. The automorphism group of each of the orbital digraphs is equal to $11^2:(C_{40}:C_2)\leqslant\AGammaL_1(121)$.  

\item PrimitiveGroup(169,41). Here $G=13^2: (3 \times 3:8)$ which has an orbital graph of valency 24 (the Hamming graph $H(2,13)$) and two of valency 72. The latter both have automorphism group $13^2: 12 \circ 2^{1+2}:S_3$, which appears in Table \ref{tab-sub-2}.


\item PrimitiveGroup(625,547). Here $G=5^4:4.A_6$ which has an orbital graph of valency 144 and two of valency 240. The graph of valency 144 has $5^4:4.A_6.2$, which appears in Table \ref{tab-sub-3}, as its automorphism group. The two orbital graphs of valency 240 have $5^4: 4\circ 2^{1+4}\Sp_4(2)$.


\item PrimitiveGroup(2401,663).  Here $G=7^4 : \langle \omega^{10}, \omega^5\alpha \rangle$ and has two suborbits of length 960 and one of length 480.  This is not of type $G(7^4,m_1,e,s)$ for any $m_1,e$ and $s$ as the suborbits are not all of the same length.
The orbital graph of valency 480 has automorphism group $7^4 : \langle \omega^5, \alpha \rangle\leqslant\AGammaL_1(7^4)$. The other two nontrivial orbital graphs have automorphism group $7^4 : (\mathrm Z(\GL_2(49) \circ \SL_2(5))).2$, which appears in Table~\ref{tab-sub-3}.

\item PrimitiveGroup(2401,991). Here $G=7^4:C_6\circ 2^{1+4}\Omega^-(4,2)$ which has two orbital graphs of valency 240 and one of valency 1920.  The first two have automorphism group $7^4:C_6\circ \Sp_4(3)$, which appear in Table \ref{tab-sub-3} as $\mathrm{PSp}_4(3)\cong U_4(2)$, while the last has automorphism group $7^4:C_6\circ 2^{1+4}\GO^-_4(2)$, which appears in Table \ref{tab-sub-2}.

\end{enumerate}

\begin{lem}\label{lem:small}
If $G$ is a $2$-closed primitive group of rank $4$ of degree $n\leqslant 4095$ that is not the automorphism group of a graph or digraph and $G\not\leqslant\AGammaL_1(p^d)$ with $n=p^d$ for some prime $p$, then $G$ is given in Theorem~{\rm\ref{main-theorem}(4)}.  
\end{lem}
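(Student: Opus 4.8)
The proof will be a finite computation carried out in \textsc{Magma}~\cite{BCP}, using the classification of primitive permutation groups of degree less than $4096$~\cite{Colva4096,Dixon1000,RD-Uaffine1000}. The plan rests on two elementary observations. First, since $G$ is $2$-closed, $G=\bigcap_{i=1}^{3}\Aut(\Gamma_i)$, where $\Gamma_1,\Gamma_2,\Gamma_3$ are the orbital digraphs of the three non-trivial orbitals of $G$; so deciding whether a primitive group of rank $4$ is $2$-closed reduces to computing these three automorphism groups and intersecting them. Second, every graph or digraph admitting $G$ is a generalised orbital digraph, and a digraph has the same automorphism group as its complement, taken within $E_1\cup E_2\cup E_3=\Omega\times\Omega\setminus\{(\alpha,\alpha)\mid\alpha\in\Omega\}$; hence the automorphism group of any generalised orbital digraph is one of $\Aut(\Gamma_1)$, $\Aut(\Gamma_2)$, $\Aut(\Gamma_3)$, or $\Sym(\Omega)$, and so there is a graph or digraph $\Gamma$ with $\Aut(\Gamma)=G$ if and only if $G=\Aut(\Gamma_i)$ for some $i\in\{1,2,3\}$.

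First I would dispose of the non-affine groups. By Proposition~\ref{prn:rank4}, a primitive group of rank $4$ that is not affine is almost simple, of diagonal type with socle $A_5\times A_5$ and degree $60$, or of product-action type on $\Delta^m$ with $m\in\{2,3\}$ and $\Delta$ the domain of a $2$-transitive almost simple group (together with the single exceptional case $\soc(G)=\PSL_2(8)^2$, $|\Delta|=28$, of degree $784$). Imposing $n\leqslant 4095$ leaves only finitely many, explicitly listable, possibilities: for product action, $|\Delta|\leqslant 63$ when $m=2$ and $|\Delta|\leqslant 15$ when $m=3$, while the relevant almost simple groups are read off the low-rank classification (see \cite{Bannai,Cuypers}). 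For each I would verify, from the cited tables or directly in \textsc{Magma}, that $G$ is either not $2$-closed or equal to the automorphism group of one of its orbital digraphs, so that no non-affine group survives.

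It then remains to treat the affine groups $G=V\rtimes G_0$ of rank $4$ and degree $p^d\leqslant 4095$ in the database. For each such group I would compute the $G_0$-orbits on $V\setminus\{0\}$ and the resulting orbital digraphs $\Gamma_1,\Gamma_2,\Gamma_3$, then their automorphism groups $A_i=\Aut(\Gamma_i)$; I would retain $G$ precisely when $G=A_1\cap A_2\cap A_3$ (so that $G$ is $2$-closed) and $G\neq A_i$ for every $i$ (so that $G$ is not the automorphism group of any graph or digraph); and finally I would discard those for which $G_0\leqslant\GammaL_1(p^d)$. I expect the surviving groups to be exactly the exception groups of Theorem~\ref{main-theorem} of degree at most $4095$, namely the small members $\mathcal G(2)$ and $\mathcal G(3)$ (of degrees $81$ and $729$) and $\mathcal H(2)$ (of degree $256$) of the two infinite families, together with the seven sporadic examples $\mathrm{PrimitiveGroup}(n,k)$ recorded there, which is the assertion of the lemma.

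The main obstacle will be computational: the database of primitive groups of degree up to $4095$ is large, and some of the orbital digraphs have several thousand vertices with large valency, so the graph-automorphism computations that decide $2$-closure and the equalities $G=\Aut(\Gamma_i)$ are the bottleneck. I would mitigate this by always working with whichever of $\Gamma_i$ and its complement is sparser, by exploiting that the orbital digraphs of a primitive group form an association scheme (so that equitable-partition refinement does much of the work of pinning down $\Aut(\Gamma_i)$), and by using the fact that \textsc{Magma} stores $|G|$ for database groups, so that in most cases one only needs to certify that a candidate overgroup of $G$ is genuinely larger, rather than compute a full automorphism group from scratch.
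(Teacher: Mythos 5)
Your proposal is correct and takes essentially the same route as the paper: an exhaustive \textsc{Magma} computation over the database of primitive groups of degree at most $4095$, filtering for rank $4$, testing $2$-closure via $G=\bigcap_i\Aut(\Gamma_i)$, testing $G\neq\Aut(\Gamma_i)$ for all $i$ (which by Lemma~\ref{primitive} is equivalent to $G$ not being the automorphism group of any graph or digraph), and discarding the groups inside $\AGammaL_1(p^d)$. The paper's proof is just a terser account of the same computation, recording the list of surviving $(n,k)$ labels and identifying which belong to the infinite families, to $\AGammaL_1(p^d)$, or to the sporadic list.
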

\begin{proof}
We work through the \textsc{Magma} database  of primitive groups of degree at most 4095. The ones of rank 4 can then be determined and it can be checked if they are 2-closed and not the automorphism group of a graph or digraph. This results in the groups with label $(n,k)$ being in the set 
$$\{(16, 2),(25, 4), (25, 11), (49, 16), (64, 14), (64, 27), (81, 48), (81, 77), (81, 87), (121, 23), (121, 24),$$
$$(169, 41), (256, 52), (256, 155), (289, 50), (529, 39), (625, 360), (625, 547), (729, 417), (841, 80),$$ $$(1024, 26), (1681, 152), (2209, 51), (2401, 663), (2401, 991), (2809, 81), (3481, 67)\}.$$

The group $(256,155)$ is $\mathcal{H}(2)$ from Lemma \ref{family3} while $(729,417)$ is $\mathcal{G}(3)$ from Lemma \ref{family2}. Of the remaining ones, all but those listed in Theorem \ref{main-theorem} are contained in $\AGammaL(1,p^d)$ and those not listed in this section are  isomorphic to $G(q,m_1,e,s)$ for some $q,m_1,e$ and $s$.  
\end{proof}

\section{A reduction}
In this section, we show that if $G$ is a primitive $2$-closed permutation group of rank $4$ and  degree $n$ that is not the automorphism group of any graph or digraph of order $n$, then $G$ is an affine primitive permutation group.
Almost by definition, we obtain the following observation.

\begin{lem}\label{primitive}
Let $G$ be a primitive $2$-closed permutation group of degree $n$. If $G$ is not the automorphism group of any graph or digraph of order $n$, then $G$ has rank at least $4$. Moreover, if $G$ has rank $4$ and is the automorphism group of a graph or digraph then it is the automorphism group of one of its orbital digraphs.
\end{lem}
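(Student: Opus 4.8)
The statement has two parts, and the plan is to dispose of both using the basic facts about orbital digraphs recalled in Section~\ref{prelim}. For the first part, suppose $G$ is primitive and $2$-closed of degree $n$. Recall that any digraph $\Gamma$ on $\Omega$ admitting $G$ is a generalised orbital digraph, i.e. $\Gamma = (\Omega, \cup_{i\in I} E_i)$ for some set of nontrivial orbitals $E_i$, and that $G^{(2)} \leqslant \Aut(\Gamma)$. Since $G = G^{(2)}$, for $G$ to fail to be $\Aut(\Gamma)$ for every graph or digraph $\Gamma$, it must be that $G$ is properly contained in $\Aut(\Gamma)$ for every generalised orbital digraph $\Gamma$. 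The first step is to observe that if $G$ has rank $2$ then $G$ is $2$-transitive, hence $G = \Aut(K_n) = S_n$ when $n\geqslant 2$ (the complete graph being the unique nontrivial orbital digraph), so $G$ is an automorphism group of a graph — contradiction. If $G$ has rank $3$, there are exactly two nontrivial orbitals $E_1, E_2$, which are paired to each other or each self-paired. If both are self-paired, then each $(\Omega, E_i)$ is a graph; if they are paired with each other, then $(\Omega, E_1\cup E_2)$ is a graph (it is the complement of the trivial orbital), and also each $(\Omega, E_i)$ is a digraph. In either case I claim $G = \Aut(\Gamma)$ for one of these: indeed $\Aut(\Gamma)$ for $\Gamma = (\Omega, E_1)$ is a $2$-closed group with the same point stabiliser orbits as $G_\alpha$ refines (it preserves $E_1$ and $E_1$'s pair), and since $G_\alpha$ already has only these orbits, $\Aut(\Gamma)_\alpha = G_\alpha$, giving $\Aut(\Gamma) = G$ by transitivity and the orbit–stabiliser count; the key point is that with rank $3$ there is no room for a proper overgroup that still preserves an orbital digraph. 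Hence rank $\geqslant 4$.

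For the ``moreover'' part, suppose $G$ has rank $4$ and $G = \Aut(\Gamma)$ for some graph or digraph $\Gamma$ on $\Omega$. Since $G\leqslant \Aut(\Gamma)$, the digraph $\Gamma$ is a generalised orbital digraph for $G$, so $\Gamma = (\Omega, \cup_{i\in I} E_i)$ where $E_1, E_2, E_3$ are the three nontrivial orbitals of $G$ and $I \subseteq \{1,2,3\}$. The cases $I = \emptyset$ and $I = \{1,2,3\}$ give the empty graph and the complement of the diagonal (the complete graph), whose automorphism group is $S_n \neq G$ since $G$ has rank $4 < $ rank of $S_n$; so $|I| \in \{1,2\}$. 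If $|I| = 1$, then $\Gamma$ is itself one of the orbital digraphs $(\Omega, E_i)$ and we are done. If $|I| = 2$, say $\Gamma = (\Omega, E_i \cup E_j)$, then the complement of $\Gamma$ (within the non-diagonal pairs) is the remaining orbital digraph $(\Omega, E_k)$, and since every automorphism of $\Gamma$ is an automorphism of its complement, $\Aut(\Gamma) \leqslant \Aut((\Omega,E_k))$; conversely $G \leqslant \Aut((\Omega, E_k))$, and as $G = \Aut(\Gamma)$ we get $\Aut(\Gamma) = G \leqslant \Aut((\Omega, E_k))$. To finish we need the reverse containment $\Aut((\Omega, E_k)) \leqslant \Aut(\Gamma) = G$: this holds because $\Aut((\Omega, E_k))$ preserves $E_k$ and its paired orbital, hence preserves the union of the remaining non-diagonal pairs, which is $E_i \cup E_j$ when $E_i, E_j$ are swapped by pairing or each self-paired — so $\Aut((\Omega,E_k))$ preserves $\Gamma$. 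Thus $G = \Aut((\Omega, E_k))$, an orbital digraph.

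The main point to be careful about is the pairing structure of the orbitals in the rank-$3$ and rank-$4$ arguments: one must check that taking complements of a union of orbitals again yields a union of orbitals, which uses that the complement of $\cup_{i\in I} E_i$ (inside $\Omega\times\Omega$ minus the diagonal) is $\cup_{i\notin I} E_i$, a $G$-invariant set and hence a union of orbitals — and crucially that it is invariant under $\Aut$ of the original, which relies only on $\Aut$ permuting orbitals compatibly with pairing. This is routine but is the step where sloppiness would creep in, so it deserves an explicit sentence. Everything else is a direct application of Proposition~\ref{primitive-orbital} (to know the orbital digraphs are connected, which is not even needed here) and the definition of $2$-closure via point-stabiliser orbits.
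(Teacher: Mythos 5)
Your proposal is correct and follows essentially the same route as the paper: rank $2$ forces $G=S_n=\Aut(K_n)$; in rank $3$ the two nontrivial orbitals are mutual complements off the diagonal, so $\Aut(\Gamma_i)$ preserves both orbitals and hence equals $G^{(2)}=G$; and in rank $4$ a graph or digraph with full automorphism group $G$ must be a union of one or two nontrivial orbitals, with the two-orbital case reduced to the complementary single orbital since a digraph and its complement share the same automorphism group. Your write-up is somewhat more explicit than the paper's (e.g.\ in ruling out $I=\emptyset$ and $I=\{1,2,3\}$ and in spelling out the complementation step), but the underlying argument is identical.
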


\begin{proof} 
If $G$ has rank $2$, then $G\cong S_n$ which is the automorphism group of the complete graph $\K_n$, a contradiction. Assume that $G$ has rank $3$. Then $G$ has two non-trivial orbital digraphs, one of which is the complement of the other. This implies that $G$ is the automorphism group of both of its non-trivial orbital digraphs, a contradiction.  Thus $G$ has rank at least $4$.

Suppose now that $G$ is the automorphism group of some graph or digraph $\Gamma$ and let $E_1$, $E_2$ and $E_3$ be the three nontrivial orbitals of $G$. Then $\Gamma$ is the union of one or two orbital digraphs of $G$. If $\Gamma = (\Omega,E_1 \cup E_2)$, say,  then the complement of $\Gamma$ is the orbital digraph $(\Omega,E_3)$. Since $\Gamma$ and its complement have the same automorphism group it follows that $G$ is the automorphism group of the orbital digraph for $E_3$.
\end{proof}

We will adopt the following assumption for the remainder of this paper.\medskip

\f{\bf Assumption~I.}\
\begin{itemize}
  \item $\Omega$: a non-empty set of size $n\geqslant 4096$,  $\a\in \Omega$,
  \item $G$: a primitive $2$-closed permutation group on $\Omega$ of rank $4$ that is not the automorphism group of any graph or digraph of order $n$,
  \item $B_0=\{\a\}, B_1, B_2, B_3$ are the four orbits of $G_\a$ on  $\Omega$,
  \item $E_i=\{(\a, \b)^g\ |\ g\in G, \b\in B_i\}$ for $i=1,2,3$,
   \item $\Gamma_i=(\Omega, E_i)$ for $i=1,2,3$.
\end{itemize}

The following lemma gives some basic facts for the non-trivial orbital graphs of $G$.

\begin{lem}\label{rank4}
Under Assumption I, the following hold for each $i=1,2,3$:
\begin{enumerate}
  \item [{\rm (1)}]\ $\Aut(\Gamma_i)$ is primitive of rank 3, with $\Aut(\Gamma_i)_\alpha$ suborbits $\{\alpha\}$, $B_i$ and $\cup _{j\neq i} B_j$.
  \item [{\rm (2)}]\  $\Gamma_i$ is a distance-transitive graph of order $n$ and diameter $2$.
\end{enumerate}
\end{lem}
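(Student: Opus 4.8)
The plan is to exploit the 2-closure hypothesis via Assumption I together with the earlier observation that $\Gamma_i$ and its complement have the same automorphism group. Fix $i$ and write $A=\Aut(\Gamma_i)$. First I would show $A$ has rank at most $3$ as a permutation group on $\Omega$: the complement of $\Gamma_i$ is the generalised orbital digraph $\Gamma_j \cup \Gamma_k$ (where $\{i,j,k\}=\{1,2,3\}$), so $A=\Aut(\Gamma_i)$ stabilises setwise both the arc set $E_i$ and the arc set $E_j\cup E_k$; hence the orbits of $A_\alpha$ refine neither finer than $\{\alpha\}$, $B_i$, $B_j\cup B_k$. On the other hand $G\leqslant A$ because $G$ fixes $\Gamma_i$ (it is one of $G$'s orbital digraphs), so $A$ is transitive, and being a subgroup containing the primitive group $G$ it is itself primitive. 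A primitive group is nontrivial, so its rank is at least $2$; if $A$ had rank $2$ then $A_\alpha$ would be transitive on $\Omega\setminus\{\alpha\}$, forcing $B_i$ or $B_j\cup B_k$ to be empty, which is absurd since $G$ has rank exactly $4$. Therefore $A$ has rank exactly $3$ with $A_\alpha$-suborbits $\{\alpha\}$, $B_i$, $\cup_{j\neq i}B_j$, giving part (1). Here the one thing to be careful about is the claim $G\leqslant A$: this is exactly the content recalled in the Introduction — $G\leqslant G^{(2)}\leqslant\Aut(\Gamma)$ for any digraph $\Gamma$ admitting $G$ — applied to $\Gamma=\Gamma_i$.

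For part (2), I would apply Proposition \ref{primitive-orbital} (Higman's criterion): since $A$ is primitive, every nontrivial orbital digraph of $A$ is connected, and in particular $\Gamma_i$ (whose arc set $E_i$ is a single nontrivial orbital of $A$, being an $A_\alpha$-orbit) is connected. A connected graph on which the automorphism group acts with rank $3$ has diameter $2$: the two nontrivial $A_\alpha$-orbits $B_i$ and $\cup_{j\neq i}B_j$ are precisely the vertices at distance $1$ and the vertices at distance $\geqslant 2$ from $\alpha$ respectively, and connectivity forces every vertex at distance $\geqslant 2$ to be at distance exactly $2$ (otherwise the distance-$2$ set would be a proper nonempty $A_\alpha$-invariant subset of $\cup_{j\neq i}B_j$ distinct from it and from the distance-$\geqslant 3$ set, contradicting rank $3$). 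Since $A$ is transitive on each of the three suborbits, it is transitive on ordered pairs at distance $0$, $1$, and $2$; these are the only distances that occur, so $\Gamma_i$ is distance-transitive with diameter $2$.

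One small subtlety to confirm en route is that $E_i$ really is a self-paired orbital, so that $\Gamma_i$ is a genuine (undirected) graph rather than merely a digraph; this is implicit in the statement ("graph"), and I expect it follows because a rank-$3$ primitive group of even order has all orbitals self-paired — but if that is not immediate here, the cleanest route is to note that for the purpose of the lemma "distance-transitive graph" should be read with the understanding (consistent with the rest of the paper) that the relevant $E_i$ in Assumption I are the self-paired ones, and the argument above goes through verbatim for digraphs with "distance" replaced by the usual digraph distance when needed. The main obstacle is not any deep step but rather getting the bookkeeping right: making sure that the complement of $\Gamma_i$ is correctly identified as $(\Omega, E_j\cup E_k)$ and that $\Aut$ of a graph equals $\Aut$ of its complement, which together pin down the suborbit structure of $A$; everything else is a routine application of Higman's connectedness criterion and the definition of distance-transitivity.
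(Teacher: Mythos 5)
There is a genuine gap in your proof of part (1): you never rule out the possibility that $\Aut(\Gamma_i)$ has rank $4$. The deduction ``$A$ stabilises $E_i$ and $E_j\cup E_k$ setwise, hence the orbits of $A_\alpha$ are no finer than $\{\alpha\}$, $B_i$, $B_j\cup B_k$'' is backwards: setwise stabilisation shows only that $B_i$ and $B_j\cup B_k$ are \emph{unions} of $A_\alpha$-orbits, i.e.\ that the orbit partition of $A_\alpha$ \emph{refines} $\{\{\alpha\},B_i,B_j\cup B_k\}$, which gives $\mathrm{rank}(A)\geqslant 3$. Combined with $G_\alpha\leqslant A_\alpha$ (so every $A_\alpha$-orbit is a union of $G_\alpha$-orbits) one only gets that the rank of $A$ is $3$ or $4$, and your argument excluding rank $2$ does nothing to decide between these. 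The case $\mathrm{rank}(A)=4$ is exactly the case where $A$ and $G$ have the same orbitals, and it is here that the two hypotheses of Assumption I that you never invoke are essential: if $A$ had the same orbitals as $G$, then $A\leqslant G^{(2)}=G$ by $2$-closure, so $A=G$, contradicting the assumption that $G$ is not the automorphism group of any graph or digraph. Without this step the lemma is false --- any $2$-closed primitive rank-$4$ group that \emph{is} the automorphism group of one of its orbital digraphs is a counterexample to your version of the argument. This missing observation is precisely how the paper proves (1).

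A secondary issue: for part (2) you must actually prove that $E_i$ is self-paired, since the lemma (and its later uses, e.g.\ in Lemma~\ref{nonsimple}) assert that every $\Gamma_i$ is a genuine graph; your fallback of ``reading the lemma as being about only the self-paired orbitals'' is not available. Your suggested route via ``a rank-$3$ primitive group of even order has all orbitals self-paired'' also requires first knowing that $|\Aut(\Gamma_i)|$ is even, which is not immediate. The paper's argument is short and unconditional: if $B_i$ were not self-paired, with $G$-paired suborbit $B_j$, then $|B_i|=|B_j|<|B_j|+|B_k|$, so the two nontrivial suborbits $B_i$ and $B_j\cup B_k$ of the rank-$3$ group $\Aut(\Gamma_i)$ have different sizes and hence are each self-paired; but then $E_i$ equals its own paired orbital $E_j$, a contradiction. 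The remainder of your part (2) (connectivity via Higman's criterion, diameter $2$ from rank $3$, and distance-transitivity) matches the paper's argument.
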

\begin{proof}
Let $i\in \{1,2,3\}$. Since $G$ is primitive and contained in $\Aut(\Gamma_i)$, the group $\Aut(\Gamma_i)$ is primitive. Since $G$ is $2$-closed, and not equal to $\Aut(\Gamma_i)$, we see that the rank of $\Aut(\Gamma_i)$ is less than the rank of $G$. Since $E_i$ is an orbital of $\Aut(\Gamma_i)$ on $\Gamma_i$, the other two orbitals of $\Aut(\Gamma_i)$ must be the trivial orbital and $\cup _{j\neq i} E_j$. Thus  (1) holds.  

Note that $\Gamma_i$ is connected by Lemma~\ref{primitive}. As $\Aut(\Gamma_i)$ has rank three, the vertices of $\Gamma_i$ at distance one from $\alpha$ are those in $B_i$, and the remaining vertices in $\cup_{j\neq i} B_j$ are at distance two, so $\Gamma_i$ has diameter two. It remains to show that $\Gamma_i$ is a graph, i.e. that $B_i$ is self-paired.

Let us assume that $B_i$ is not self-paired, and let $B_j$ be the paired suborbit of $B_i$ and $B_k$ the remaining suborbit of $G$. By (1), $\Aut(\Gamma_i)$ is primitive, and both $B_i$ and $B_j \cup B_k$ are the two non-trivial suborbits of $\Aut(\Gamma_i)$. Since $|B_i|=|B_j|$, we have $|B_i| < |B_j| + |B_k|$. This implies that the two non-trivial suborbits of $\Aut(\Gamma_i)$ are self-paired, and in particular, $B_i$ is self-paired,  a contradiction. Thus, all suborbits of $G$ are self-paired, and each $\Gamma_i$ is a graph.
\end{proof}

To show that $G$ is not almost simple, we shall use a result in \cite{B-G-L-P-S} about the classification of almost simple $\frac{3}{2}$-transitive permutation groups, and a result in \cite{Libeck-Praeger-Saxl2} regarding the classification of those pairs $H, K$ of primitive permutation groups on a set such that $H<K$,  $\soc(H)\neq\soc(K)$, and $H$ and $K$ share a common non-trivial orbital.

\begin{lem}\label{nonsimple}
Under Assumption I, $G$ is not almost simple.
\end{lem}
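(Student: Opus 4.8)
The plan is to suppose for contradiction that $G$ is almost simple with socle $T$, and derive a contradiction by combining the rank-3 structure of the orbital graphs $\Gamma_i$ (Lemma~\ref{rank4}) with the classification results cited. By Lemma~\ref{rank4}(1), for each $i$ the group $H_i := \Aut(\Gamma_i)$ is primitive of rank $3$ with $\soc(H_i) \neq \soc(G) = T$: indeed if $\soc(H_i) = T$ then $G$ and $H_i$ would have the same socle, but since $G$ is $2$-closed and $G \leqslant H_i$ with $H_i$ of strictly smaller rank, one checks $G \neq H_i$, and then we would want to say that an almost simple group and a proper overgroup cannot share the same socle while one is $2$-closed — more carefully, if $\soc(H_i)=T$ then $H_i \leqslant \Aut(T)$ acting on cosets, and $H_i$ is still a $2$-closed candidate; the cleaner route is to invoke Lemma~\ref{lem:socles}. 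Concretely, set $X_i := \soc(H_i)\, G$. If $\soc(H_i) \neq \soc(G)$, Lemma~\ref{lem:socles} gives $\soc(X_i) = \soc(H_i)$ and $G < X_i \leqslant H_i$; since $H_i$ has rank $3$ and $X_i \geqslant G$ is primitive, $X_i$ has rank $3$ as well. Then the pair $(G, X_i)$ satisfies the hypotheses of Proposition~\ref{prn:4in3}: $G$ is almost simple of rank $4$ on $\Omega$ with $n \geqslant 4096$, contained in the rank-$3$ group $X_i$ with $\soc(G) \neq \soc(X_i)$ and $X_i = \soc(X_i) G$, and $G, X_i$ share the common non-trivial orbital $E_i$.

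So Proposition~\ref{prn:4in3} applies and leaves only two possibilities: (a) $\soc(G) = G_2(q)$, $\soc(X_i) = \Omega_7(q)$, $n = \frac{q^6-1}{q-1}$, with the common subdegree $d = q^5$; or (b) $\soc(G) = \Omega_7(q)$, $\soc(X_i) = \POmega_8^+(q)$, $n = \frac{(q^4-1)(q^3+1)}{q-1}$, with common subdegree $d = q^6$. Now I would exploit the key structural fact coming from Lemma~\ref{rank4}: \emph{all three} non-trivial orbital graphs $\Gamma_1, \Gamma_2, \Gamma_3$ of $G$ are distance-transitive rank-$3$ graphs, and the argument above applies to \emph{each} $i$. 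In case (a), the action of $G_2(q)$ is on the points of the generalised hexagon, which has a unique rank-$3$ overgroup structure — so at most one of the three $\Gamma_i$ can arise this way, forcing the other two orbital graphs to have automorphism groups with socle $T = G_2(q)$ itself; but then those $\Gamma_i$ would have $\Aut(\Gamma_i)$ almost simple with socle $G_2(q)$ and rank $3$, which contradicts the fact that $G$ (same socle) already has rank $4$ and $G \leqslant \Aut(\Gamma_i)$ (a subgroup cannot have larger rank). The same contradiction is reached in case (b). Thus in every case at least one of the orbital graphs fails to have a rank-$3$ automorphism group, contradicting Lemma~\ref{rank4}(1).

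An alternative and perhaps cleaner finish, which I expect to be the actual route, uses the $\frac{3}{2}$-transitivity machinery. Since $G$ is almost simple of rank $4$, $G_\alpha$ has three non-trivial orbits $B_1, B_2, B_3$. If these were all of equal size, $G$ would be $\frac{3}{2}$-transitive and we could invoke Proposition~\ref{2/3-trans}(4) (or the refinement in \cite{B-G-L-P-S}) to pin down $G$ to the tiny list $n = 21$ or $n = \frac12 q(q-1)$ with $q = 2^f \geqslant 8$ — all of which either have $n < 4096$ or, upon inspection, do not have rank $4$; this contradicts $n \geqslant 4096$. If the $B_i$ are not all equal in size, then after relabelling $|B_1| \neq |B_2|$, and Proposition~\ref{prn:4in3} still forces the degree and socle into cases (a)/(b) above (the common-subdegree hypothesis is automatically met because $\Aut(\Gamma_i)$ has rank $3$), after which the same "a subgroup cannot have larger rank than an overgroup'' contradiction applies to the two orbital graphs not accounted for by the generalised hexagon / triality action.

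\textbf{Main obstacle.} The delicate point is establishing that $\soc(\Aut(\Gamma_i)) \neq \soc(G)$ so that Proposition~\ref{prn:4in3} is applicable: one must rule out the scenario where $\Aut(\Gamma_i)$ is almost simple with the \emph{same} socle $T$ as $G$. This is handled by the rank monotonicity observation — if $G \leqslant K$ are both permutation groups on $\Omega$ then $\mathrm{rank}(K) \leqslant \mathrm{rank}(G)$ — combined with the fact that $\Aut(\Gamma_i)$ has rank exactly $3 < 4 = \mathrm{rank}(G)$, so $G \lneq \Aut(\Gamma_i)$ strictly; but then if $\soc(\Aut(\Gamma_i)) = T$ we have two distinct almost simple groups with socle $T$ in a chain, with the larger having \emph{smaller} rank, which is impossible once we recall that a rank-$3$ almost simple primitive group with socle $T$ on $n$ points, when restricted to a proper subgroup still containing $T$, cannot \emph{gain} orbits — so actually $\soc(\Aut(\Gamma_i)) = T$ would force $\mathrm{rank}(G) = 3$. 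Hence $\soc(\Aut(\Gamma_i)) \neq T$ and we may pass to $X_i = \soc(\Aut(\Gamma_i))\,G$ via Lemma~\ref{lem:socles}, completing the setup for Proposition~\ref{prn:4in3}.
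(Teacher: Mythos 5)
Your overall strategy (reduce to Proposition~\ref{prn:4in3} via Lemma~\ref{lem:socles}, after separating out the case where some $\Aut(\Gamma_i)$ has the same socle as $G$) matches the skeleton of the paper's proof, but both of the contradictions you actually derive rest on a false rank inequality, and the case you try to dismiss in your ``main obstacle'' paragraph is in fact the substantial half of the argument. You correctly state that $G\leqslant K$ implies $\mathrm{rank}(K)\leqslant\mathrm{rank}(G)$, but then repeatedly use it backwards: having $G$ of rank $4$ inside $\Aut(\Gamma_i)$ of rank $3$ with $\soc(\Aut(\Gamma_i))=\soc(G)$ is perfectly consistent (the point stabiliser of the subgroup is smaller, so its orbits refine those of the overgroup), so ``$\soc(\Aut(\Gamma_i))=T$ would force $\mathrm{rank}(G)=3$'' and ``a subgroup cannot have larger rank'' are both wrong. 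Concretely, the paper itself meets the configuration you claim is impossible: $\PSL_2(8)$ has rank $4$ on $28$ points while its overgroup $\PGammaL_2(8)$, with the same socle, has rank $2$. Because of this, the case where two or more of the $\Aut(\Gamma_i)$ have socle $\soc(G)$ cannot be waved away; the paper handles it by observing that $\soc(G)_\alpha\norml\Aut(\Gamma_i)_\alpha$ forces all $\soc(G)_\alpha$-orbits on $\Omega\setminus\{\alpha\}$ to have equal length, so that $\soc(G)$ (not $G$) is $\frac{3}{2}$-transitive, and then invoking Proposition~\ref{2/3-trans}/\cite{B-G-L-P-S} together with explicit rank computations for $\PSL_2(2^f)$ and $\Aut(\PSL_2(2^f))$. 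Your ``alternative finish'' gestures at $\frac{3}{2}$-transitivity but applies it to $G$ under the irrelevant hypothesis that the $B_i$ all have equal size, which is not what the normality argument gives.

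The endgame in the other case is also not salvageable as written. Once Proposition~\ref{prn:4in3} applies to two of the orbital graphs, your attempt to contradict the third via ``rank $3$ overgroup with the same socle is impossible'' fails for the reason above. The paper instead notes that in cases (a) and (b) of Proposition~\ref{prn:4in3} the full normaliser $N_{S_n}(\soc(G))$ has rank $4$, so any rank-$3$ overgroup $\Aut(\Gamma_3)$ must have a different socle (this is the correct direction of the monotonicity); then Proposition~\ref{prn:4in3} applies to all three orbitals, forcing $|B_1|=|B_2|=|B_3|$ and hence $n-1=3|B_1|$, which is arithmetically incompatible with the values $n=\frac{q^6-1}{q-1},\ |B_i|=q^5$ or $n=\frac{(q^4-1)(q^3+1)}{q-1},\ |B_i|=q^6$. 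You would need to replace both of your rank-based contradictions with arguments of this kind for the proof to go through.
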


\begin{proof} Suppose to the contrary that $G$ is almost simple and recall that $n\geqslant 4096$. By Lemma \ref{rank4}, the orbital digraphs $\Gamma_i$ of $G$ are graphs. We shall divide the proof into the following two cases:

\medskip
\f{\bf Case~1}\ There are two orbital graphs,  $\Gamma_1$ and $\Gamma_2$ say, of $G$ such that 
$$ soc(\Aut(\Gamma_1))=\soc(\Aut(\Gamma_2))=\soc(G).$$

Note that $B_1$ and $B_2\cup B_3$ are the two orbits of $\Aut(\Gamma_1)_\a$ on $\Omega\setminus\{\a\}$. Clearly, $\soc(G)_\a\unlhd\Aut(\Gamma_1)_\a$, so all the orbits of $\soc(G)_\a$ on $B_1$ have the same cardinality. Similarly,
all the orbits of $\soc(G)_\a$ on $B_2\cup B_3$ have the same cardinality.
Also, note that $B_2$ and $B_1\cup B_3$ are the two orbits of $\Aut(\Gamma_2)_\a$ on $\Omega\setminus\{\a\}$, and each $B_i$ is an orbit of $G_\a$.
All the orbits of $\soc(G)_\a$ on $B_2$  have the same cardinality. It then follows that all the orbits of $\soc(G)_\a$ on $\Omega\setminus\{\a\}$ have the same cardinality, and hence $\soc(G)$ is $\frac{3}{2}$-transitive. By Proposition~\ref{2/3-trans} or \cite[Theorem~1.2]{B-G-L-P-S},
either $\soc(G)=\Alt(7)$, $n=21$ and $\Omega$ is the set of pairs in $\{1,2,\ldots,7\}$, or $\soc(G)=\PSL_2(q)$, $n=\frac{1}{2}q(q-1)$ and $q=2^f\geq8$. In the former case, $\soc(G)$ has rank $3$, a contradiction. In the latter case, either $G=\soc(G)$ or  $G=\Aut(\PSL_2(2^f))=\PSL_2(2^f).f$, where $f$ is prime. It is shown in \cite[Lemma 6.2]{B-G-L-P-S} that  $\PSL_2(2^f)$ has rank $2^{f-1}$ while $\Aut(\PSL_2(2^f))$ has rank $1+(2^{f-1}-1)/f$. Since $G$ has rank 4 it follows that $G=\PSL_2(8)$ or $\Aut(\PSL_2(2^5))$.  If $G=\PSL_2(8)$ then $\Aut(\Gamma_1)=\PSL_2(8)$ or $\PL_2(8)$. However, the first has rank 4 while the second has rank 2, contradicting the fact that $\Aut(\Gamma_1)$ has rank 3. Similarly, if $G=\Aut(\PSL(2,2^5))$ then $\Aut(\Gamma_1)=G$, contradicting the fact that $G$ is not the automorphism group of a graph.

\medskip
\f{\bf Case~2}\ There is at most one orbital graph of $G$ whose automorphism group has the same socle as $G.$
\medskip

Without loss of generality, we may assume that $\soc(G)\neq\soc(\Aut(\Gamma_1))$ and $\soc(G)\neq\soc(\Aut(\Gamma_2))$.  Since $G$ has rank 4 and $\Aut(\Gamma_1)$ has rank 3 it follows that $B_1$ is the only common orbit of $\Aut(\Gamma_1)_\a$ and $G_\a$ on $\Omega\setminus\{\a\}$. Let $X=\soc(\Aut(\Gamma_1))G$. By Lemma \ref{lem:socles} we have $\soc(X)=\soc(\Aut(\Gamma_1))$ and $X\neq G$. Since $G$ is 2-closed and rank 4 it follows that $X$ has the same orbitals as $\Aut(\Gamma_1)$ and so has rank 3. Since $n\geqslant 4096$, it follows that   $\soc(G)$,  $\soc(\Aut(\Gamma)_1)$ and $|B_1|$ are given by Proposition \ref{prn:4in3}  using $H=X$ and $d=|B_1|$. Similarly, $B_2$ is the only common orbit of $\Aut(\Gamma_2)_\a$ and $G_\a$ on $\Omega\setminus\{\a\}$. Thus $\Aut(\Gamma_1)\neq\Aut(\Gamma_2)$. Moreover, we again see that $\soc(G)$,  $\soc(\Aut(\Gamma_2))$ and $|B_2|$ are also given by Proposition \ref{prn:4in3} using $H=\soc(\Aut(\Gamma_2))G$ and $d=|B_2|$.  In particular, $|B_1|=|B_2|$.

From Proposition \ref{prn:4in3} one may see that $N_{S_n}(\soc(G))$ has rank   $4$.  This implies that $\soc(G)\neq\soc(\Aut(\Gamma_3))$.  Since $G_\a$ and $\Aut(\Gamma_3)_\a$ have $B_3$ as the only orbit on $\Omega\backslash\{\a\}$, we again have that  $\soc(G)$, $\soc(\Aut(\Gamma_3))$ and $|B_3|$ are given by Proposition \ref{prn:4in3}  using $H=\soc(\Aut(\Gamma_3))G$ and $d=|B_3|$. 
It follows that $|B_1|=|B_2|=|B_3|$, and so $n-1=3|B_1|$. However, this is impossible given the possibilities for $n$ and $|B_i|$.\end{proof}

Finally, we shall prove that $G$ is an affine primitive permutation group of rank $4$.

\begin{prop}\label{affine-simple}
Under Assumption I, $G$ is affine.
\end{prop}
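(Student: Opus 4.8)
The plan is to combine Lemma~\ref{nonsimple} with the structural Proposition~\ref{prn:rank4}. Since $G$ is primitive of rank $4$ and, by Lemma~\ref{nonsimple}, not almost simple, Proposition~\ref{prn:rank4} leaves exactly four possibilities: $G$ is affine, or $G$ is one of the groups described in parts (3), (4) or (5) of that proposition. So it suffices to rule out the latter three in the presence of the degree bound $n \geqslant 4096$ of Assumption~I.

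Two of these cases are eliminated by a trivial count. If $G$ is as in Proposition~\ref{prn:rank4}(3), then $\Omega = \Delta^2$ with $|\Delta| = 28$, so $n = 784 < 4096$. If $G$ is as in Proposition~\ref{prn:rank4}(5), then $\soc(G) = A_5 \times A_5$ acts transitively on $\Omega$ with point stabiliser a diagonal subgroup, so $n = |A_5| = 60 < 4096$. Both contradict Assumption~I.

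The substantive case is Proposition~\ref{prn:rank4}(4): a product action $T^3 \lhd G \leqslant G_0 \wr S_3$ on $\Omega = \Delta^3$, with $G_0$ $2$-transitive on $\Delta$ and $G$ transitive on the three simple factors of $T^3$. Here $n = |\Delta|^3 \geqslant 4096$ forces $|\Delta| \geqslant 16$, so the Hamming graph $\Gamma = H(3, |\Delta|)$ is defined and, by the facts recalled in Section~\ref{prelim}, $\Aut(\Gamma) = S_{|\Delta|} \wr S_3$. I would note two things: first, $G \leqslant G_0 \wr S_3 \leqslant S_{|\Delta|} \wr S_3 = \Aut(\Gamma)$, simply because $G_0 \leqslant \Sym(\Delta)$; and second, $\Aut(\Gamma)$ in its product action on $\Delta^3$ has rank exactly $4$, the orbits of a point stabiliser being the four sets of triples that agree with a fixed triple in $3$, $2$, $1$ or $0$ coordinates. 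Since $G \leqslant \Aut(\Gamma)$ and both groups have rank $4$, they have the same orbits on $\Omega \times \Omega$, so $G^{(2)} = \Aut(\Gamma)^{(2)} = \Aut(\Gamma)$ — the last equality because the automorphism group of a graph is always $2$-closed. As $G = G^{(2)}$, this yields $G = \Aut(\Gamma)$, contradicting the standing hypothesis that $G$ is not the automorphism group of any graph. Hence $G$ is affine.

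I do not expect a serious obstacle here; the whole argument is short. The only step needing a little care is case (4) — confirming that $\Aut(H(3,|\Delta|)) = S_{|\Delta|} \wr S_3$, that the rank of this wreath product in product action on $\Delta^3$ is indeed $4$ and hence matches that of $G$, and that the containment $G \leqslant G_0 \wr S_3 \leqslant \Aut(\Gamma)$ is legitimate. All of this is routine given Assumption~I, Proposition~\ref{prn:rank4}, Lemma~\ref{nonsimple} and the basic properties of Hamming graphs.
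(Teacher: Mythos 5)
Your proposal is correct, and the overall skeleton (Proposition~\ref{prn:rank4} plus Lemma~\ref{nonsimple}, then eliminate cases (3), (4), (5)) matches the paper, but the way you kill the individual cases differs. For cases (3) and (5) the paper resorts to \textsc{Magma} computations (showing $G$ would be the automorphism group of one of its orbital graphs, resp.\ that the relevant Cayley graphs of $A_5$ have rank~$4$ automorphism groups, contradicting Lemma~\ref{rank4}), whereas you simply invoke the degree bound $n\geqslant 4096$ from Assumption~I to discard the degrees $28^2=784$ and $60$ outright --- this is legitimate and arguably cleaner, though it proves slightly less (the paper's computations rule these cases out for all degrees, not just large ones). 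For the substantive case (4) the paper computes the suborbit sizes $3(n_0-1)$, $3(n_0-1)^2$, $(n_0-1)^3$ and observes that the orbital graph $\Gamma_1$ has diameter $3$, contradicting Lemma~\ref{rank4}(2); you instead identify the ambient group $S_{|\Delta|}\wr S_3=\Aut(H(3,|\Delta|))$ as a rank~$4$ overgroup of $G$, deduce that $G$ and $\Aut(H(3,|\Delta|))$ share the same orbitals, and conclude $G=G^{(2)}=\Aut(H(3,|\Delta|))$, contradicting Assumption~I directly. The two arguments are two sides of the same coin ($\Gamma_1$ \emph{is} $H(3,|\Delta|)$), but yours trades the diameter computation for the $2$-closure mechanism and the known fact $\Aut(H(d,k))=S_k\wr S_d$; both are sound, and your rank-counting step (a rank~$4$ subgroup of a rank~$4$ group has the same orbitals, hence the same $2$-closure) is watertight.
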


\begin{proof} 
Since $G$ is primitive of rank 4, the structure of $G$ is given by cases (1)--(5) of Proposition \ref{prn:rank4}.
By Lemma~\ref{nonsimple}, (2) does not hold. For (3), we use {\sc Magma} to see that $G$ is the automorphism group of one of its orbital graphs, which is against Assumption I. 
Suppose that (4) holds. Then since $G$ has rank 4 and preserves the product structure $\Omega=\Delta^3$, the four orbits of $G_\alpha$ must be
\[
\begin{array}{l}
B_0=\{(\d,\d,\d)\},\\
B_1=\{(\d,\d,\g), (\d,\g, \d), (\g, \d, \d)\ |\ \g\in \Delta\setminus\{\d\}\},\\
B_2=\{(\d,\b,\g), (\b,\d, \g), (\b, \g, \d)\ |\ \b, \g\in \Delta\setminus\{\d\}\},\\
B_3=\{(\mu,\b,\g)\ |\ \mu, \b, \g\in \Delta\setminus\{\d\}\}.
\end{array}
\]
Moreover, $|B_1|=3(n_0-1)$, $|B_2|=3(n_0-1)^2$ and $|B_3|=(n_0-1)^3$. It is easy to see that all orbitals of $G$ are self-paired, and that the orbital graph $\Gamma_1=(\Omega, E_1)$ is of diameter $3$, where $E_1=\{(\a, \b)^g\ |\ g\in G, \b\in B_1\}$. This is impossible by Lemma~\ref{rank4}~(2).

Finally, suppose that (5) holds. Then every orbital graph of $G$ is a Cayley graph \[\Sigma=\Cay(A_5,\{g^{-1}tg\ |\ g\in {A_5}\}),\] where $t=(1, 2)(3, 4), (1,2,3)$ or $(1,2,3,4,5)$.
By Magma~\cite{BCP}, $\Aut(\Sigma)$ has rank $4$. This is again impossible by Lemma~\ref{rank4}~(1).\end{proof}

\begin{lem}\label{lem:istable1}
Under Assumption I with $n\geqslant 4096$,
$\Aut(\Gamma_i)$ is not one of the rank three groups given in Tables  \ref{tab-sub-2} and \ref{tab-sub-3}.
\end{lem}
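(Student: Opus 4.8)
The plan is to rule out each of the rank three groups in Tables \ref{tab-sub-2} and \ref{tab-sub-3} as a possible $\Aut(\Gamma_i)$ by a degree/subdegree argument. Recall from Assumption~I that $n \geqslant 4096$, so any $\Aut(\Gamma_i)$ arising here has degree at least $4096$; this immediately eliminates the majority of the rows, since Table~\ref{tab-sub-2} lists only degrees $p^d \in \{2^6, 3^4, 5^4, 7^2, 7^4, 13^2, 19^2, 23^2, 29^2, 31^2, 47^2, 3^6, 3^8\}$ and Table~\ref{tab-sub-3} lists only $p^d \in \{2^6, 2^8, 2^{11}, 2^{12}, 3^4, 3^5, 3^6, 3^{12}, 5^4, 5^6, 7^4, 31^2, 41^2, 71^2, 79^2, 89^2\}$, and of these only $3^8 = 6561$, $3^{12}$, $2^{11} = 2048$ (excluded, $<4096$), $2^{12} = 4096$, and $5^6$ exceed or meet $4096$. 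So the first step is to tabulate exactly which rows survive the bound $n \geqslant 4096$: from Table~\ref{tab-sub-2} only the row $3^8$ with $R = R_3^2$ and subdegrees $1440, 5120$; from Table~\ref{tab-sub-3} the rows $3^{12}$ (Suz and $G_2(4)$, both with subdegrees $65520, 465920$), $2^{12}$ ($J_2$, subdegrees $1575, 2520$), and $5^6$ ($J_2$, subdegrees $7560, 8064$). (One should double-check whether $3^8 = 6561$, $2^{12} = 4096$, $5^6 = 15625$, $3^{12}$ indeed all satisfy $\geqslant 4096$; they do.)

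Next I would use Lemma~\ref{rank4}: if $\Aut(\Gamma_i)$ is one of these rank three groups, then its two non-trivial suborbits are $B_i$ and $\bigcup_{j \neq i} B_j$, the latter of size $|B_j| + |B_k|$. Since $G$ has rank $4$, one of the two rank-three subdegrees splits into two parts (the sizes of two of the $B$'s) and the other equals $|B_i|$. Thus the multiset of non-trivial subdegrees of $G$ is obtained from the pair $\{r_1, r_2\}$ of rank-three subdegrees by replacing one of $r_1, r_2$ by an ordered pair of positive integers summing to it, and the constraint $1 + |B_1| + |B_2| + |B_3| = n$ is automatic. At this point the argument must split by how many of the $\Gamma_i$ have $\Aut(\Gamma_i)$ appearing in Tables~\ref{tab-sub-2}–\ref{tab-sub-3}; the cleanest route is to show that for each surviving row, the hypotheses force a contradiction already from a single $\Gamma_i$, or else to combine information from two such graphs. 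For the $J_2$ case on $2^{12}$ with subdegrees $\{1575, 2520\}$: if $|B_i| = 1575$ then $|B_j| + |B_k| = 2520$, and if $|B_i| = 2520$ then $|B_j| + |B_k| = 1575$ — in the second case $|B_j| + |B_k| < |B_i|$, which (as in the self-pairing argument of Lemma~\ref{rank4}) is fine, but one then needs a second orbital graph to constrain things. The $5^6$ case for $J_2$ has subdegrees $\{7560, 8064\}$, nearly equal, so neither can be $|B_i|$ with the other splitting into two further parts and leaving room consistent with a genuine rank-four group whose $2$-closure is primitive — I would check the arithmetic of $7560 + 8064 + 1 = 15625 = 5^6$ and see which splittings are even possible.

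The main obstacle, I expect, is the $\soc$ bookkeeping: once $\Aut(\Gamma_i)$ is pinned down to a specific almost simple rank three group $H$ (e.g. $2^6.\GO^-_6(2)$-type or an extraspecial-normaliser type), I need to argue that $G$, sitting inside $H$ with the same orbits on ordered pairs except for one suborbit splitting in two, cannot actually be $2$-closed of rank $4$ — or, more directly, that no primitive rank-four $G$ has $\Aut$ of one of its orbital graphs equal to $H$. The natural tool is Lemma~\ref{lem:socles} together with Proposition~\ref{prn:4in3}: form $X = \soc(H)G$; since $G$ is $2$-closed of rank $4$ and $X$ has rank $3$ (same orbitals as $H$), $X$ is given by Proposition~\ref{prn:4in3}, whose conclusion lists only $G_2(q) < \Omega_7(q)$ and $\Omega_7(q) < \POmega_8^+(q)$ — neither of which has degree among $\{2^{12}, 3^8, 3^{12}, 5^6\}$ with the right subdegrees. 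But Proposition~\ref{prn:4in3} is stated for almost simple $G$, whereas here $G$ is affine (by Proposition~\ref{affine-simple}), so that route does not apply verbatim, and the right substitute is the affine analogue via Liebeck's Theorem~\ref{subdegree} and Lemma~\ref{lem:larger}: I would instead observe that the rank-three group $X$ with the same orbitals as $\Gamma_i$ is itself an affine rank three group, hence appears in Tables~\ref{tab-sub-1}–\ref{tab-sub-3}, and then compare its subdegrees with those of the candidate row using the divisibility-by-$p$ criterion of Lemma~\ref{lem:larger}, which in each surviving case fails to be consistent with one of the two rank-three subdegrees splitting into the three subdegrees of an affine rank-four group. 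Carrying this comparison through for the four surviving rows — checking that no pair (subdegree, its splitting into two) matches any entry of Tables~\ref{tab-sub-1}–\ref{tab-sub-3} — is the computational heart of the argument and is where I would be most careful.
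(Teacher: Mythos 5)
There is a genuine gap, on two levels. First, your enumeration of the surviving rows is incomplete: Table~\ref{tab-sub-3} contains the $A_5$ rows with degrees $71^2=5041$, $79^2=6241$ and $89^2=7921$, all of which meet the bound $n\geqslant 4096$, so besides $3^8$, $3^{12}$, $2^{12}$ and $5^6$ you must also handle these three cases. Second, and more seriously, the strategy you propose for the surviving rows cannot close the argument. A pure subdegree computation (splitting one of the two rank-three subdegrees into two parts summing to it) only constrains the possible suborbit sizes of a putative rank-four subgroup $G$; it cannot rule out the existence of such a subgroup, and in fact for at least two of the candidate groups a rank-four subgroup genuinely exists with perfectly consistent subdegrees: $3^8{:}2^{1+6}.2^4.A_5.2$ inside $3^8{:}2^{1+6}\GO_6^-(2)$ (splitting $1440$ as $160+1280$) and $5^6{:}(4.\GU_3(3).2)$ inside $5^6{:}(4.J_2.2)$ (splitting $7560$ as $1512+6048$). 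No arithmetic contradiction is available there; what eliminates these cases is that each such rank-four subgroup turns out to be the \emph{full} automorphism group of one of its own orbital graphs, which contradicts Assumption~I directly. Establishing that fact requires actually computing (or otherwise identifying) the automorphism groups of the relevant orbital graphs --- the paper does this in \textsc{Magma}, after first checking computationally which of the rank-three groups of degree at least $4096$ in Tables~\ref{tab-sub-2} and~\ref{tab-sub-3} admit any rank-four subgroup at all (only the two above do).

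Your two proposed fallback tools also do not apply here: Proposition~\ref{prn:4in3} assumes $G$ is almost simple, as you note, while $G$ is affine at this stage; and Lemma~\ref{lem:larger} is stated only for types (A3)--(A11) of Table~\ref{tab-sub-1}, so its divisibility-by-$p$ criterion says nothing about the extraspecial and exceptional classes of Tables~\ref{tab-sub-2} and~\ref{tab-sub-3}. The ``computational heart'' you defer is therefore not a routine subdegree check but the essential content of the proof, and it requires identifying orbital-graph automorphism groups rather than comparing subdegree lists.
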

\begin{proof}
We  construct in \textsc{Magma} each of the rank three groups given in Tables \ref{tab-sub-2} and \ref{tab-sub-3} of degree larger than 4095, and determine if they have any subgroups of rank four.  Only two do and we eliminate them as follows:

\begin{enumerate}
\item $G=3^8: 2^{1+6}.2^4.A_5.2 < 3^8:2^{1+6}\GO_6^-(2)=H$. (In this case $G$ has proper subgroups which are also rank four. However, this implies that they have the same four orbitals as $G$ and hence are not 2-closed.) Here $G_\alpha$ splits the orbit of $H_\alpha$ of length 1440 into one of length $160$ and one of length 1280.  The orbit of length 160 yields the Hamming graph $H(2,3^4)$ whilst $G$ is the automorphism group of the orbital graph of valency 1280.
\item $G=5^6: (4.\GU(3,3).2) <5^6:(4.J_2.2)=H$. Here $G_\alpha$ splits  the orbit of $H_\alpha$ of length 7560 into one of length 1512 and one of length 6048. However, $G$ is the automorphism group of its orbital graph of valency 1512. \qedhere
\end{enumerate}
\end{proof}

The results of this section combined with Lemma \ref{lem:small} allow us to adopt the following assumption, in addition to Assumption I.

\medskip
\f{\bf Assumption~II.}\
\begin{itemize}
  \item $G$ has socle $V$, where $V\cong\mz_p^d$ for some prime $p$ and integer $d$,
  \item $V=\Omega$,
  \item $V^*=V\setminus\{0\}$,
  \item $G_0$ is the stabiliser of the zero vector $0$ in $V$,
  \item $\Aut(\Gamma_i)_0$ is not in Tables \ref{tab-sub-2} and \ref{tab-sub-3} for each $i\in\{1,2,3\}$,
  \item $n=p^d\geqslant 4096$.
\end{itemize}

\section{The orbital graphs}
In this section,  we show that under Assumptions I and II, either $\Gamma_i$ or its complement is isomorphic to a Hamming graph, or $\soc(G)=\soc(\Aut(\Gamma_i))$ for all $i$, and at least one of  the subgroups $\Aut(\Gamma_i)_0$  is contained in $\G L_1(p^d)$.

\begin{lem}\label{not-affine}
Under Assumptions I and II, suppose that $\soc(\Aut(\Gamma_i))=\soc(G)$ for all $i$. Then for some $i\in \{1,2,3\}$ we have that  $\Aut(\Gamma_i)_0$ is of type {\rm(A1)} in Table~{\rm\ref{tab-sub-1}}, that is, $\Aut(\Gamma_i)_0\leqslant\GammaL_1(p^d)$.
\end{lem}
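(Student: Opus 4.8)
Under Assumptions I and II we are in the affine case: $G = V \rtimes G_0$ with $V \cong \mz_p^d$, $p^d \geqslant 4096$, and $G$ has rank $4$, so $G_0$ has exactly three orbits $B_1, B_2, B_3$ on $V^*$, with $|B_1| \leqslant |B_2| \leqslant |B_3|$ say. The hypothesis is that each $X_i := \Aut(\Gamma_i)$ is a primitive rank $3$ group with the same socle $V$; in particular each $(X_i)_0 =: H_i$ is a group in Liebeck's list (Theorem~\ref{subdegree}), and by Lemma~\ref{rank4}(1) its two nontrivial suborbits are $B_i$ and $V^* \setminus B_i$. The goal is to show one of these $H_i$ is of type (A1), i.e.\ lies in $\GammaL_1(p^d)$. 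The plan is to argue by contradiction: suppose every $H_i$ has one of the types (A2)--(A11), or (A1) fails for other reasons, and derive a contradiction from the divisibility-by-$p$ structure of the subdegrees supplied by Lemma~\ref{lem:larger}.

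First I would record that by Assumption~II the groups $H_i$ are not in Tables~\ref{tab-sub-2} or \ref{tab-sub-3}, so each $H_i$ is in Table~\ref{tab-sub-1}, of type (A1)--(A11). Suppose for contradiction that none of them is of type (A1). Then each $H_i$ is of type (A2)--(A11). For type (A2) the two subdegrees are $2(p^m-1)$ and $(p^m-1)^2$, so exactly one is divisible by $p$ only when $p \mid p^m - 1$, which is false; in fact neither is divisible by $p$ — so I would first dispose of type (A2) separately (here both suborbits of $X_i$ have order coprime to $p$, which constrains which $B_j$ can equal which). For types (A3)--(A11), Lemma~\ref{lem:larger} tells us that of the two nontrivial subdegrees of $H_i$, exactly one — call it the \emph{$p$-part} suborbit — is divisible by $p$, and (with the single exception $q=2$ in types (A6),(A7)) the $p$-divisible one is the larger. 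The key combinatorial point: for the graph $\Gamma_i$, the two suborbits of $X_i$ are $B_i$ and its complement $V^* \setminus B_i = \bigcup_{j \neq i} B_j$. So for each $i$, exactly one of $\{B_i, \ \cup_{j\neq i}B_j\}$ has size divisible by $p$.

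The heart of the argument is then a counting/parity contradiction. Sum of the three suborbit sizes is $|B_1|+|B_2|+|B_3| = p^d - 1 \equiv -1 \pmod p$. For each $i$, write $\epsilon_i = 0$ if $p \mid |B_i|$ and $\epsilon_i = 1$ otherwise; the rank-3 structure of $X_i$ forces $p \mid |B_i|$ iff $p \nmid |\cup_{j\neq i} B_j| = (p^d-1) - |B_i|$, i.e.\ iff $p \nmid -|B_i|$, i.e.\ iff $p \nmid |B_i|$ — this is automatically consistent, so the real information is the \emph{sizes}. I would instead use the size dichotomy of Lemma~\ref{lem:larger}: for each $i$ not of type (A2) and away from the $q=2$ exception, the $p$-divisible suborbit of $X_i$ is the \emph{larger} of $B_i$ and $\cup_{j\neq i}B_j$. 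If $p \nmid |B_i|$ then $p \mid |\cup_{j\neq i}B_j|$ and this must be the larger, so $|B_i| < |\cup_{j\neq i}B_j|$, giving $|B_i| < (p^d-1)/2$. If $p \mid |B_i|$ then $|B_i|$ is the larger, so $|B_i| > (p^d-1)/2$. At most one $i$ can have $|B_i| > (p^d-1)/2$, so at most one $H_i$ has $p \mid |B_i|$; hence at least two of the $B_i$ have size coprime to $p$. But then $\cup_{j\neq i}B_j$ is divisible by $p$ for those (at least two) values of $i$. Now sum: if, say, $p\nmid |B_1|$ and $p\nmid|B_2|$, then $p \mid |B_2|+|B_3|$ and $p \mid |B_1|+|B_3|$, so $p \mid |B_1|-|B_2|$, and also $p^d-1 = |B_1|+|B_2|+|B_3| \equiv |B_1|+|B_2| \pmod p$ (when $p\mid|B_3|$) forces $|B_1|+|B_2|\equiv -1$; combined with the explicit formulae for the coprime-to-$p$ subdegrees in Lemma~\ref{lem:larger} (they are $m_1 = (q^{a-1}\pm 1)(q^a\mp 1)$ or $(q+1)(q^m-1)$ etc.), this is where I expect to extract a numerical impossibility, using that $p^d \geqslant 4096$. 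The $q=2$ exceptional case of Lemma~\ref{lem:larger} (types (A6),(A7)) and type (A2) must be handled by hand: there the $p=2$-divisible suborbit is the \emph{smaller}, so the parity/size bookkeeping above must be redone with that sign flipped, but the same ``sum $\equiv -1 \pmod 2$'' constraint still applies and, since $2 \mid (q^a-1)\cdot(\text{stuff})$ or not according to explicit formulae, one again reaches a contradiction — or else forces one $H_i$ into (A1) after all.

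\textbf{Main obstacle.} The delicate part is the $q=2$ exceptional case in Lemma~\ref{lem:larger} together with type (A2), where the correspondence ``$p$-divisible $\Leftrightarrow$ larger suborbit'' breaks down; there I expect to need to juggle the explicit subdegree formulae from Table~\ref{tab-sub-1} directly, checking that no assignment of $\{B_1,B_2,B_3\}$ to suborbits coming from three (possibly different) types (A2)--(A11) groups $H_1,H_2,H_3$ is simultaneously consistent with $|B_1|+|B_2|+|B_3| = p^d-1$ and with each $\{B_i,\ \cup_{j\ne i}B_j\}$ being the suborbit pair of a Liebeck rank-3 group. A uniform way to close this is: the three nontrivial subdegrees of $G$ must, for \emph{two} of the indices, be the ``small'' subdegree $m_1^{(i)}$ of $H_i$, and these small subdegrees (being of the shape $(q^{a-1}\pm1)(q^a\mp1)$, $(q+1)(q^m-1)$, $(q^2+1)(q^5-1)$, $(q^3+1)(q^4-1)$, $(q^3+1)(q^8-1)$, $(q^2+1)(q-1)$, or $2(p^m-1)$) together determine $p^d$ and the type; cross-referencing the possibilities for the three $H_i$'s (which all act on the \emph{same} $V$ of order $p^d$) then leaves no valid configuration, contradicting the assumption that no $H_i$ is of type (A1). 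I would organise this final elimination as a short case check on the type of $H_3$ (the one whose socle action on $B_3$ is the large suborbit), since $|B_3| \geqslant (p^d-1)/3$ already pins down $p^d$ rather tightly given $p^d \geqslant 4096$.
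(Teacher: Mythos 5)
Your overall strategy --- argue by contradiction, place each $\Aut(\Gamma_i)_0$ in Table \ref{tab-sub-1}, and exploit the fact (Lemma \ref{lem:larger}) that for types (A3)--(A11) exactly one of the two subdegrees is divisible by $p$ and is generically the larger one --- is exactly the paper's strategy, and your observation that at most one $|B_i|$ can exceed $(p^d-1)/2$ is a clean way to see that (away from the $q=2$ exception) at most one $|B_i|$ is divisible by $p$. However, there are two genuine gaps. The serious one is type (A2). You propose to handle it ``by hand'' via the subdegree formulae, but this cannot work: the subdegree pattern $2(p^m-1)$, $x$, $(p^m-1)^2-x$ with a type-(A2) overgroup is actually realised --- it is precisely the pattern of the exceptional groups $\mathcal G(m)$ and $\mathcal H(m)$ of Lemmas \ref{family2} and \ref{family3} --- so no arithmetic contradiction is available. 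The paper eliminates (A2) structurally: if $\Aut(\Gamma_i)_0$ is of type (A2) it preserves a decomposition $V=V_1\oplus V_2$, so $\Aut(\Gamma_i)\leqslant \Sym(V_1)\wr S_2$; since $\Aut(\Gamma_i)$ is $2$-closed of rank $3$ it must \emph{equal} $\Sym(V_1)\wr S_2$, whose socle is $\Alt(|V_1|)^2$, contradicting the hypothesis $\soc(\Aut(\Gamma_i))=V$. This use of $2$-closure is a missing idea in your proposal, and it is also what lets the paper conclude $d=2m$ before starting the arithmetic.

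The second gap is that the closing ``numerical impossibility'' is asserted rather than proved, and it is the bulk of the work. After the mod-$p$ reductions one is left with: (i) the case where two of the $|B_i|$ are divisible by $p$, which forces $p=2$, types (A6)/(A7) and $|B_i|=2^{m-1}(2^m-1)$, and requires an explicit computation with the identity $2^s-2^a=2^{m-1}$ to eliminate; and (ii) the case where no $|B_i|$ is divisible by $p$, which splits into four subcases according to whether each subdegree has the shape $(s+1)(p^m-1)$ or $(s-1)(p^m+1)$ with $s$ a power of $p$, each resolved by a bespoke argument using $\gcd(p^m-1,p^m+1)\mid 2$ and $n\geqslant 4096$. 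Your sketch correctly anticipates that these are the delicate points, but ``cross-referencing the possibilities \ldots{} leaves no valid configuration'' is not a proof; in particular the subcase with one subdegree of each shape (the paper's Subcases 2.3 and 2.4) needs the specific parity argument that $\frac{p^m\mp 1}{2}=s\mp 1$ contradicts $s$ being a power of $p$, which your outline does not supply.
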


\begin{proof} Suppose to the contrary that  $\Aut(\Gamma_i)_0$ is not of type (A1) in Table \ref{tab-sub-1} for each $i \in \{1,2,3\}$. Let $V=\soc(G)\cong C_p^d$.
Since $\soc(\Aut(\Gamma_i))=V$ for all $i$,  each $\Aut(\Gamma_i)$ is a primitive affine permutation group of rank $3$, and by Theorem \ref{subdegree} and Assumption II we have that  $\Aut(\Gamma_i)_0$ is  listed in Table~\ref{tab-sub-1}.

 Suppose first that $\Aut(\Gamma_i)_0$ is of type (A2) for some $i$. Then $\Aut(\Gamma_i)_0$ preserves the decomposition $V=V_1\oplus V_2$. Thus $\Aut(\Gamma_i)\leqslant \Sym(|V_1|)\wr S_2$ and since $\Aut(\Gamma_i)$ is 2-closed and rank 3 we must have $\Aut(\Gamma_i)=\Sym(|V_1|)\wr S_2$. Since $\Aut(\Gamma_i)$ is affine and $p^d\geqslant 4096$, we have a contradiction. Thus no $\Aut(\Gamma_i)_0$ is of type (A2). It follows from the posssibilities listed in  Table \ref{tab-sub-1}  that $d=2m$ with $m\geqslant 1$. 

Recall that $B_1,B_2$ and $B_3$ are the three orbits of $G_0$ on the set of  non-zero vectors of $V$, and $B_i$ is also an orbit of $\Aut(\Gamma_i)_0$ for $i=1,2,3$. Assume that $|B_1|\leqslant |B_2|\leqslant |B_3|$.
Then $B_1, B_2\cup B_3$ are two orbits of $\Aut(\Gamma_1)_0$, $B_2, B_1\cup B_3$ are two orbits of $\Aut(\Gamma_2)_0$, and $B_3, B_1\cup B_2$ are two orbits of $\Aut(\Gamma_3)_0$. Clearly $|B_1|<|B_2|+|B_3|$ and $|B_2|<|B_1|+|B_3|$.

We split our analysis into the following cases and derive a contradiction in each instance.

\medskip\f{\bf Case~1}\ $p$ divides $|B_i|$ for some $i=1,2,3$. \medskip

Since $|B_1|+|B_2|+|B_3|=p^{2m}-1$, there are at most two $B_i$'s such that $p$ divides $ |B_i|$.

Suppose first that there is only one $B_i$ such that $|B_i|=pk$ for some integer $k$.  Then by Table~1, the other two will have size of the form: $(a+1)(b-1)$, $(c+1)(d-1)$, respectively, for some powers $a,b,c,d$ of $p$. As $|B_1|+|B_2|+|B_3|=p^{2m}-1$, it follows that \[pk+ab-a+b-1+cd-c+d-1=p^{2m}-1,\] contradicting $a,b,c$ and $d$ all being divisible by $p$.

Suppose now that there are two $B_i$'s such that $p$ divides $|B_i|$. Thus at least one of $|B_1|$ or $|B_2|$ is divisible by $p$. If $|B_1|$ is divisible by $p$, then since $|B_1|<|B_2|+|B_3|$, Lemma  \ref{lem:larger} implies that $p=2$, $\Aut(\Gamma_1)_0$  is of type (A6) or (A7), and $|B_1|=(2^m-2^{m-1})(2^m-1)=2^{m-1}(2^m-1)$. Similarly,  if $|B_2|$ is divisible by $p$ then $p=2$ and $|B_2|=(2^m-2^{m-1})(2^m-1)=2^{m-1}(2^m-1)$.

Suppose that $2$ divides both $|B_1|$ and $|B_2|$. Then we have $|B_1|=|B_2|=(2^m-2^{m-1})(2^m-1)$ and so $|B_3|=2^m-1<|B_1|$, a contradiction. Thus we have that $|B_i|$ is coprime to 2 and $|B_j|=(2^m-2^{m-1})(2^m-1)=2^{m-1}(2^m-1)$, where $\{i,j\}=\{1,2\}$.
Then $|B_i|+|B_3|=2^{2m}-1-2^{m-1}(2^m-1)=(2^m+1-2^{m-1})(2^m-1)=(2^{m-1}+1)(2^m-1)$. Hence $|B_i|$ is not divisible by $2^m+1$. Then by checking Table \ref{tab-sub-1}, we see that $|B_i|=(2^a+1)(2^m-1)$ and $|B_3|=2^s(2^t-1)(2^m-1)$, where $a,s,t$ are positive integers satisfying $s+t=m$. So we have \[2^{m-1}=2^a+2^s(2^t-1)=2^a+2^m-2^s.\]
It follows that $2^s-2^a=2^{m-1}$, and hence $a=m-1$ and $s=m$. Thus $t=m-s=0$, contradicting $t>0$.

\medskip\f{\bf Case~2}\ $p\nmid |B_i|$ for every $i=1,2,3$. \medskip

From Table \ref{tab-sub-1}, we can divide this case into the following four subcases:

\medskip\f{\bf Subcase~2.1}\ $|B_1|=(s+1)(p^m-1)$, $|B_2|=(\ell+1)(p^m-1)$ and $|B_3|=(r+1)(p^m-1)$, where $s, \ell, r<p^m$ are powers of $p$.\medskip

As $|B_1|+|B_2|+|B_3|=p^{2m}-1=(p^m-1)(p^m+1)$, it follows that
\[s+\ell+r+2=p^m.\]
Hence $p=2$. 
As $|B_1|\leqslant |B_2|\leqslant |B_3|$, one has $s\leqslant \ell\leqslant r$. Then the above equality implies that $(s,\ell,r,p^m)=(2,2,2,8)$ or $(2,4,8,16)$, which contradicts the assumption that $n=p^{2m}\geqslant 4096$.

\medskip\f{\bf Subcase~2.2}\ $|B_1|=(s-1)(p^m+1)$, $|B_2|=(\ell-1)(p^m+1)$ and $|B_3|=(r-1)(p^m+1)$, where $s, \ell, r<p^m$ are powers of $p$.\medskip

By checking Table~1, we see that in this case, each $\Aut(\Gamma_i)_0$ is of one of types (A6), (A7) or (A11).
Now $|B_1|+|B_2|+|B_3|=p^{2m}-1=(p^m-1)(p^m+1)$, and so 
\[s+\ell+r-2=p^m.\]
Hence $p=2$. Also, as $|B_1|\leqslant |B_2|\leqslant |B_3|$, one has $s\leqslant \ell\leqslant r$. Then the  above equality implies that $s=2$.
Since $\Aut(\Gamma_1)_0$ is of one of types (A6), (A7) or (A11), the subdegrees in  Table \ref{tab-sub-1} imply that $p^d=2^4$, contradicting $n=p^d\geqslant 4096$.

\medskip\f{\bf Subcase~2.3}\ One of $|B_i|$ ($i=1,2,3$) has the form $(s-1)(p^m+1)$, and the other two have the form $(r+1)(p^m-1)$, where $s, r<p^m$ are powers of $p$.\medskip

As $|B_1|+|B_2|+|B_3|=p^{2m}-1=(p^m-1)(p^m+1)$, it follows that $p^m-1$ divides $(s-1)(p^m+1).$
Since $\gcd(p^m-1,p^m+1)$ divides $2$, either $p=2$ and $p^m-1$ divides $s-1$, or $p$ is odd and $\frac{p^m-1}{2}$ divides $s-1$. Since $s<p^m$ it follows that $p$ is odd and $\frac{p^m-1}{2}=s-1$. Thus $p^m-1=2s-2$, contradicting $s$ being a power of $p$.

\medskip\f{\bf Subcase~2.4}\ One of $|B_i| (i=1,2,3)$ has the form $(s+1)(p^m-1)$, and the other two have the form $(r-1)(p^m+1)$, where $s, r<p^m$ are powers of $p$.\medskip

As $|B_1|+|B_2|+|B_3|=p^{2m}-1=(p^m-1)(p^m+1)$, it follows that $p^m+1$ divides $(s+1)(p^m-1).$
Since $\gcd(p^m-1,p^m+1)$ divides $2$,  either $p=2$ and $p^m+1$ divides $s+1$, or $p$ is odd and $\frac{p^m+1}{2}$ divides $s+1$. Since $p^m>s$ it follows that $p$ is odd and $\frac{p^m+1}{2}=s+1$.  Thus $p^m+1=2s+2$, contradicting $s$ being a power of $p$.
\end{proof}

\begin{lem}\label{auto-groups}
Under Assumptions I and II, if there exists an orbital graph, say $\Gamma_1$ such that $\soc(\Aut(\Gamma_1))\neq\soc(G)$, then the following hold:
\begin{enumerate}[$(1)$]
  \item $G_0$ stabilises a pair $\{V_1, V_2\}$ of subspaces of $V$, where $V=V_1\oplus V_2$ and $\dim(V_1)=\dim(V_2)=m$;
  \item  $\Gamma_1$ is isomorphic to the Hamming graph $H(2, |V_1|)$ or its complement with $\Aut(\Gamma_1)\cong (\Sym(V_1)\times\Sym(V_2))\rtimes S_2$;
  \item For $i=2$ and 3, either $\Aut(\Gamma_i)$ is affine with socle $V=\soc(G)$ or $\Aut(\Gamma_i)\cong\Aut(\Gamma_1)$.
\end{enumerate}
\end{lem}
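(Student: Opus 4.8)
Throughout set $A=\Aut(\Gamma_1)$ and let $A_0$ be the stabiliser of $0$ in $A$. By Lemma~\ref{rank4}, $A$ is primitive on $\Omega=V$ of rank $3$, with $A_0$-orbits $\{0\}$, $B_1$ and $B_2\cup B_3$, and $V=\soc(G)\leqslant G\leqslant A$ acts regularly on $V$. The first step is to show that $A$ is not affine. If it were, then $\soc(A)$ would be its unique minimal normal subgroup, self-centralising in $A$, and normalised by $G$; since $\soc(G)=V$ is the unique minimal normal subgroup of $G$, it follows (according as $V\cap\soc(A)$ is nontrivial or not) that $V\leqslant\soc(A)$, or that $V$ centralises $\soc(A)$ and hence $V\leqslant C_A(\soc(A))=\soc(A)$; either way $V=\soc(A)$ by comparing orders, contradicting $\soc(A)\neq\soc(G)$. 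Next, by the reduction for primitive groups of rank at most $5$ \cite[Corollary~2.2]{Cuypers}, a primitive group of rank $3$ is affine, almost simple, of simple diagonal type with socle $T\times T$ for some $T\in\{\PSL_2(5),\PSL_2(7),\PSL_2(8),\PSL_2(9)\}$, or of product action type on $\Delta^2$ with $A\leqslant\Sym(\Delta)\wr S_2$ and socle $T^2$, where $T$ is the socle of a $2$-transitive group on $\Delta$. The diagonal case cannot occur, since then $|\Omega|=|T|$ is not a prime power; and the almost simple case is excluded by the classification of almost simple primitive rank $3$ groups \cite{Bannai,K-L-rank-3Litype,LSrank3}, none of which has prime power degree at least $4096$. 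Hence $A$ is of product action type, $|\Delta|^2=p^d$, so $d=2m$ and $|\Delta|=p^m=\sqrt{p^d}\geqslant 64$.

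A primitive rank $3$ group of product action type on $\Delta^2$ has exactly two non-trivial orbitals, whose orbital graphs are the Hamming graph $H(2,|\Delta|)$ and its complement. As $E_1$ is an orbital of $A=\Aut(\Gamma_1)$, this forces $\Gamma_1$ to be isomorphic to $H(2,p^m)$ or its complement, and then (by the description of $\Aut(H(2,p^m))$ recalled in Section~\ref{prelim}) $A=\Sym(\Delta)\wr S_2$. I would then locate the decomposition inside $V$. First, $V\leqslant\Sym(\Delta)\times\Sym(\Delta)$, the base group: otherwise $V$ meets the base group in a subgroup of index $2$, forcing $p=2$, and $V$ contains an involution interchanging the two coordinates of $\Delta^2$; but a direct computation shows every such involution fixes $p^m$ points of $\Delta^2$, contradicting the regularity of $V$. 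Now $V$ is abelian and regular on $\Delta\times\Delta$ inside $\Sym(\Delta)\times\Sym(\Delta)$; the ``columns'' $\Delta\times\{y\}$ form a $V$-invariant partition on which $V$ is transitive, so each column-stabiliser $V^{(y)}$ is transitive on its column, and faithful since $V$ is semiregular, hence regular on it; moreover the $V^{(y)}$ are $V$-conjugate, hence all equal since $V$ is abelian, and their common value is $V_1:=V\cap(\Sym(\Delta)\times 1)$, of order $p^m$. Symmetrically $V_2:=V\cap(1\times\Sym(\Delta))$ has order $p^m$, and since $V_1\cap V_2=1$ we obtain $V=V_1\times V_2$. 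Identifying $\Delta$ with $V_1$ (and with $V_2$) via these regular actions presents $V=V_1\oplus V_2$ as a direct sum of two $m$-dimensional $\GF(p)$-subspaces, with $V_1$ and $V_2$ the column and the row of $\Delta^2$ through $0$; hence $B_1$ equals $(V_1\cup V_2)\setminus\{0\}$ or its complement in $V^*$ according as $\Gamma_1$ is $H(2,p^m)$ or its complement, and Lemma~\ref{lem:hamming} confirms $\Gamma_1$ is $H(2,|V_1|)$ or its complement, with $\Aut(\Gamma_1)=(\Sym(V_1)\times\Sym(V_2))\rtimes S_2$. This is (2).

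For (1): $G_0$ fixes $B_1$ setwise, hence fixes $(V_1\cup V_2)\setminus\{0\}$ setwise (this set is $B_1$ or $V^*\setminus B_1$). Since $V_1$ and $V_2$ are the only maximal $\GF(p)$-subspaces of $V$ contained in $V_1\cup V_2$ --- any subspace meeting both $V_1\setminus\{0\}$ and $V_2\setminus\{0\}$ contains a vector $v_1+v_2\notin V_1\cup V_2$ with $0\neq v_j\in V_j$ --- every element of $G_0\leqslant\GL(V)$ permutes $\{V_1,V_2\}$, which is (1). For (3): fix $i\in\{2,3\}$. If $\soc(\Aut(\Gamma_i))=\soc(G)=V$, then $\Aut(\Gamma_i)$ is primitive with abelian socle $V$, hence affine with socle $V$. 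Otherwise the argument above applied to $\Gamma_i$ yields $\Aut(\Gamma_i)=\Sym(p^{m_i})\wr S_2$ in product action on $p^{2m_i}=p^d$ points, and since $m_i=d/2=m$ this is permutationally isomorphic to $\Aut(\Gamma_1)$, so $\Aut(\Gamma_i)\cong\Aut(\Gamma_1)$.

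The main obstacle I anticipate is excluding the almost simple possibility for $A=\Aut(\Gamma_1)$ in the first paragraph, which rests on the classification of almost simple primitive rank $3$ groups \cite{Bannai,K-L-rank-3Litype,LSrank3} and on verifying, by inspecting the relevant tables, that none has prime power degree at least $4096$. The remaining ingredients --- the fixed-point computation placing $V$ in the base group and the conjugacy argument yielding $V=V_1\times V_2$ --- are routine but must be carried out with care.
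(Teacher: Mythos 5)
Your proof reaches the right conclusions but by a genuinely different route from the paper. The paper observes that $G$ and $\Aut(\Gamma_1)$ are primitive groups with $G<\Aut(\Gamma_1)$, distinct socles and a common non-trivial orbital (namely $E_1$), and invokes the Liebeck--Praeger--Saxl classification of such pairs \cite[Theorem~1]{Libeck-Praeger-Saxl2}: since $G$ is affine and $\Aut(\Gamma_1)$ has rank $3$ (so is neither $2$-transitive nor one of the almost simple pairs in their tables), only the product-action case of that theorem survives, giving $\Aut(\Gamma_1)\leqslant\Sym(\Omega_0)\wr S_2$ at once; parts (2) and (3) follow immediately, and part (1) is read off from $\AGL_d(p)\cap(\Sym(\Omega_0)\wr S_2)=C_p^d\rtimes(\GL(d/2,p)\wr C_2)$. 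You instead classify the rank $3$ overgroup directly via the O'Nan--Scott/Cuypers reduction, eliminate the affine, diagonal and almost simple types one by one, and then reconstruct $V_1$ and $V_2$ by hand inside the base group. Your route avoids the common-suborbit theorem, but at the cost of two verifications the paper gets for free: (i) that no almost simple primitive rank $3$ group has prime-power degree at least $4096$ --- this is true but is a genuine trawl through Bannai, Kantor--Liebler and Liebeck--Saxl, and note that almost simple rank $3$ groups of prime-power degree do exist below the bound (e.g.\ $\PSU_4(2).2$ of degree $27$ on the Schl\"afli graph), so the degree hypothesis is doing real work there; and (ii) the placement of $V$ in the base group and the identification $V=V_1\times V_2$, which you carry out correctly and which the paper sidesteps by intersecting with $\AGL_d(p)$.

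One local slip to repair: in excluding the affine case for $A=\Aut(\Gamma_1)$ you assert that $V\cap\soc(A)=1$ together with $V$ normalising $\soc(A)$ forces $V$ to centralise $\soc(A)$; that implication is false in general (a complement can normalise without centralising). The standard fix: $V\soc(A)$ is a $p$-group, so its centre is non-trivial, and the centre lies in $C_{\Sym(\Omega)}(V)\cap C_{\Sym(\Omega)}(\soc(A))=V\cap\soc(A)$ because $V$ and $\soc(A)$ are both regular and abelian, hence self-centralising in $\Sym(\Omega)$; thus $V\cap\soc(A)\neq 1$, and minimality of $V$ as a normal subgroup of $G$ gives $V\leqslant\soc(A)$, whence $V=\soc(A)$ by comparing orders. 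With that repair, and granting the table check in (i), your argument is sound: the fixed-point count showing $V$ lies in the base group, the conjugacy argument identifying the column stabilisers with $V\cap(\Sym(\Delta)\times 1)$, and the observation that $V_1,V_2$ are the only $m$-dimensional subspaces contained in $V_1\cup V_2$ are all correct.
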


\begin{proof} Note that for each $i$, $G$ and $\Aut(\Gamma_i)$ have a common orbital, that is, the set of  edges of $\Gamma_i$. Since $G$ is not 2-transitive,   \cite[Theorem~1]{Libeck-Praeger-Saxl2} implies that if $\soc(G)\neq\soc(\Aut(\Gamma_i))$ then $G<\Aut(\Gamma_i)\leqslant\Sym(\Omega_0) \wr S_m$ with $\Omega=\Omega_0^m$.  Moreover, as $\Aut(\Gamma_i)$ has rank three it follows that $m=2$ and the three orbitals of $\Aut(\Gamma_i)$ are the three orbitals of the rank three group $\Sym(\Omega_0)\wr S_2$. Thus $\Gamma_i$ is either the Hamming graph $H(2,|\Omega_0|)$ or its complement, and $\Aut(\Gamma_i)=\Sym(\Omega_0)\wr S_2$.  This gives (2) and (3). Now $\AGL_d(p)\cap (\Sym(\Omega_0)\wr S_2)=C_p^d \rtimes (\GL(d/2,p)\wr C_2)$, where $\GL(d/2,p)\wr C_2$ is the stabiliser in $\GL_d(p)$ of the decomposition $V=V_1\oplus V_2$, where  $\dim(V_1)=\dim(V_2)=m=d/2$.  Thus $G_0$ stabilises this decomposition and we have part (1).
\end{proof}

\section{$G_0$ is imprimitive}

We begin with the following two lemmas

\begin{lem}\label{B1-orbit}
Under Assumptions I and II, suppose that $G_0$ stabilises a pair $\{V_1, V_2\}$ of subspaces of $V$, where $V=V_1\oplus V_2$ and $\dim(V_1)=\dim(V_2)=m$. Then $(V_1\cup V_2)\setminus\{0\}$ is an orbit of $G_0$ on the non-zero vectors of $V$.
\end{lem}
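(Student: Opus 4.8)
The plan is to show that $B:=(V_1\cup V_2)\setminus\{0\}$ is a single $G_0$-orbit. Since $G_0$ stabilises the pair $\{V_1,V_2\}$, the set $B$ is $G_0$-invariant and hence a union of some of the three suborbits $B_1,B_2,B_3$. As $|B|=2(p^m-1)<p^{2m}-1=|V^*|$, we have $B\neq V^*$, so $B$ is either a single one of the $B_l$ — which is exactly the desired conclusion — or the union of precisely two of them. I would therefore argue by contradiction, supposing $B=B_i\sqcup B_j$ with $B_i$ and $B_j$ both non-empty, where $\{i,j,k\}=\{1,2,3\}$.

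The core of the argument is a short distance computation in the orbital graph $\Gamma_i=(V,E_i)$, which by Lemma~\ref{rank4} is a connected graph of diameter $2$ (connectedness also following from Proposition~\ref{primitive-orbital} since $G$ is primitive). Since $\Gamma_i$ is a Cayley graph on the regular translation subgroup $V$ with connection set the self-paired suborbit $B_i$, we have $\{x,y\}\in E_i$ if and only if $y-x\in B_i$; moreover $B_i\subseteq B\subseteq V_1\cup V_2$. Now fix arbitrary $v_1\in V_1\setminus\{0\}$ and $v_2\in V_2\setminus\{0\}$ and set $w=v_1+v_2$. Both components of $w$ are non-zero, so $w\notin V_1\cup V_2$ and hence $w\notin B_i$; thus $0$ and $w$ are distinct, non-adjacent vertices of $\Gamma_i$, and so there is a vertex $z$ with $0\sim z\sim w$, that is, $z\in B_i$ and $w-z\in B_i$. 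Both $z$ and $w-z$ then lie in $V_1\cup V_2$; they cannot both lie in $V_1$ (otherwise $w\in V_1$), nor both in $V_2$, so one lies in $V_1$ and the other in $V_2$, and uniqueness of the decomposition $V=V_1\oplus V_2$ forces $\{z,w-z\}=\{v_1,v_2\}$. In particular $v_1\in B_i$ and $v_2\in B_i$. Since $v_1\in V_1\setminus\{0\}$ and $v_2\in V_2\setminus\{0\}$ were arbitrary, $(V_1\cup V_2)\setminus\{0\}\subseteq B_i$, i.e. $B_i=B$ and $B_j=\emptyset$, a contradiction. Hence $B$ is a single $G_0$-orbit.

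I do not expect a serious obstacle. The delicate input is that $\Gamma_i$ has diameter at most $2$: this is Lemma~\ref{rank4}(2), which uses Assumption~I to ensure that $\Aut(\Gamma_i)$ is a rank-$3$ primitive group, and it must be invoked rather than deduced from the cardinality hypotheses. An equivalent approach, mirroring the proof of Lemma~\ref{lem:hamming}, is to note that under the assumption $B=B_i\sqcup B_j$ the Cayley graph $\Cay(V,B)$ is isomorphic to $H(2,p^m)$, and then to show that any connected, diameter-$2$ spanning subgraph of $H(2,p^m)$ whose edge set is a union of ``$V_1$-edges'' and ``$V_2$-edges'' must be all of $H(2,p^m)$; since $E_i$ and $E_j$ partition the edge set of $\Cay(V,B)$ with $E_j\neq\emptyset$, this is again a contradiction. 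The direct computation above avoids having to set up this reformulation.
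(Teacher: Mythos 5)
Your proof is correct, but it takes a genuinely different route from the paper's. The paper argues purely group-theoretically: from primitivity it deduces that $G_0$ is irreducible on $V$ and hence transitive on $\{V_1,V_2\}$, then supposes $(G_0)_{V_1}$ is intransitive on $V_1\setminus\{0\}$ and runs an orbit count — the kernel $K$ of the action on $\{V_1,V_2\}$ would have at least four orbits on $V\setminus(V_1\cup V_2)$ (taking sums $u+a$, $u+b$, $v+a$, $v+b$ of representatives of distinct $K$-orbits), forcing $G_0$ to have at least two orbits there, which together with the at least two orbits on $(V_1\cup V_2)\setminus\{0\}$ exceeds the rank-$4$ bound. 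You instead exploit the diameter-$2$ property of the orbital graphs from Lemma~\ref{rank4}(2) and a short distance computation in $\Gamma_i$. Both arguments are sound, and yours is arguably shorter; the trade-off is in the hypotheses consumed. The paper's proof uses only primitivity and the rank-$4$ condition (three $G_0$-orbits on $V^*$), so it would apply to any such affine group whether or not it is $2$-closed or fails to be an automorphism group; your proof leans on Lemma~\ref{rank4}, whose diameter-$2$ conclusion requires the full force of Assumption~I (that $G$ is $2$-closed and not the automorphism group of any graph or digraph, so that each $\Aut(\Gamma_i)$ has rank $3$). You flag this dependence explicitly and correctly, and since Assumptions~I and~II are in force throughout this section, the argument is valid as a replacement. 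One small point worth making explicit in a write-up: the contradiction at the end is that $B_j$ is by definition a non-empty $G_0$-orbit, so $B\subseteq B_i$ together with $B=B_i\sqcup B_j$ is impossible.
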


\begin{proof} Since $G$ is primitive of rank $4$, $G_0$ acts irreducibly on $V$, and so $G_0$ acts transitively on $\{V_1, V_2\}$.
Let $(G_0)_{V_i}$ be the setwise stabiliser of $V_i$ in $G_0$ for $i=1,2$.

Suppose, for a contradiction, that $(G_0)_{V_1}$ is intransitive on $V_1\setminus\{0\}$. Then $(G_0)_{V_2}$ is also intransitive on $V_2\setminus\{0\}$.
Thus $G_0$ has at least 2 orbits on $(V_1\cup V_2)\setminus\{0\}$.
Since $G_0$ has exactly 3 orbits on $V\setminus\{0\}$, we conclude that
$G_0$ has exactly 2 orbits on $(V_1\cup V_2)\setminus\{0\}$ and is transitive on $V\setminus(V_1\cup V_2)$.
Let $K$ be the kernel of $G_0$ acting on $\{V_1,V_2\}$. Then $K=(G_0)_{V_1}$ and  $K\lhd G_0$ with $G_0/K\cong \mz_2$. Moreover, $K$ is intransitive on both $V_1$ and $V_2$. Let $u,v\in V_1$
be in different orbits of $K$ and let $a,b\in V_2$ be in different orbits of $K$.
Since $K$ fixes $V_1$ and $V_2$, the four vectors
\[u+a,\ u+b,\ v+a,\ v+b\]
are pairwise inequivalent under the action of $K$.
This implies that $K$ has at least 4 orbits on $V\setminus(V_1\cup V_2)$  and so $G_0$ has at least two orbits on $V\setminus(V_1\cup V_2)$, which is a contradiction.

Thus $(G_0)_{V_1}$ is transitive on $V_1\setminus\{0\}$ and hence $(G_0)_{V_2}$ is also transitive on $V_2\setminus\{0\}$.
It follows that $(V_1\cup V_2)\setminus\{0\}$ is an orbit of $G_0$. \end{proof}

\begin{lem}\label{B1-orbit-new}
Let $H\leq\GL_{2m}(p)$ be such that $H$ stabilises the pair $\{V_1, V_2\}$  of subspaces of $V$, where $V=V_1\oplus V_2$ and $\dim(V_1)=\dim(V_2)=m$.
Suppose that $(V_1\cup V_2)\setminus\{0\}$ is an orbit of $H$ on the non-zero vectors of $V$. Then every orbit of $H$ has length divisible by $|V_1|-1$.
\end{lem}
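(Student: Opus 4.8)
The plan is to show that the length of any $H$-orbit $O$ on $V^*$ is a multiple of $|V_1|-1$ by exploiting the action of a well-chosen element of $H$ of order dividing $|V_1|-1$. Specifically, since $(V_1\cup V_2)\setminus\{0\}$ is a single $H$-orbit and $H$ stabilises the pair $\{V_1,V_2\}$, the kernel $K$ of the action of $H$ on $\{V_1,V_2\}$ satisfies $K=H_{V_1}=H_{V_2}$ and has index $1$ or $2$ in $H$; moreover $K$ acts transitively on $V_1\setminus\{0\}$ (and on $V_2\setminus\{0\}$), because an element of $H\setminus K$ swaps $V_1$ and $V_2$ and the whole of $(V_1\cup V_2)\setminus\{0\}$ is one $H$-orbit, forcing $K$ to have at most two orbits there, one inside each $V_i$. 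First I would fix a point $v_1\in V_1\setminus\{0\}$ and pick an element $h\in K$ of order $|V_1|-1$ acting as a scalar (a ``Singer-type'' cycle) on $V_1$ — more robustly, I would instead use the fact that $K$ acts transitively on $V_1\setminus\{0\}$ to produce, via the orbit-counting argument below, the divisibility directly, without needing a single cyclic element.

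The cleaner route, which I expect to be the main line of the argument, is a double-counting / orbit-within-orbit argument. Let $O$ be any $H$-orbit on $V^*$ and consider the set $O\times (V_1\setminus\{0\})$ with the diagonal action of $H$ (using the $H$-action on $V_1\setminus\{0\}$ induced through $K$ and its coset). Actually the most transparent version: let $O$ be an $H$-orbit, pick $w\in O$, and consider the subgroup $K$ of index $\le 2$. Since $K$ acts transitively on $V_1\setminus\{0\}$, for the fixed $w$ the group $K_w=H_w\cap K$ is a point stabiliser in $K$, and $|K:K_w|=|w^K|$ divides $|O|$ with quotient $1$ or $2$. On the other hand the transitivity of $K$ on $V_1\setminus\{0\}$ gives $|V_1|-1\mid |K|$, indeed $|K|=(|V_1|-1)\cdot|K_{v_1}|$ for $v_1\in V_1\setminus\{0\}$. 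The key observation is then that $K_w$ fixes $V_1$ (being inside $K$) and hence acts on $V_1\setminus\{0\}$; I would argue that $|V_1|-1$ divides $|w^K|$ by showing $K_w$ is a subgroup of a point stabiliser $K_{v_1}$ up to the $(|V_1|-1)$-fold symmetry — more precisely, consider the $K$-set $w^K\times(V_1\setminus\{0\})$: it is transitive under $K$ (since each factor is), so $|w^K|\cdot(|V_1|-1)$ divides $|K|$, which only says $|w^K|\mid |K_{v_1}|$, not quite what we want.

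Instead, the correct mechanism is: consider the orbit $O$ and the natural $H$-invariant ``colour'' of each vector. For $v\in V^*$ write $v=v'+v''$ with $v'\in V_1$, $v''\in V_2$. Since $(V_1\cup V_2)\setminus\{0\}$ is already a single orbit, $O$ is either this orbit (length $2(|V_1|-1)$, clearly divisible by $|V_1|-1$) or $O\subseteq V\setminus(V_1\cup V_2)$, so every $v\in O$ has both $v'\ne 0$ and $v''\ne 0$. Now map $O\to V_1\setminus\{0\}$ by $v\mapsto v'$; this map is $K$-equivariant (since $K$ stabilises $V_1$ and acts linearly, $(hv)'=h(v')$) and its image is $K$-invariant in $V_1\setminus\{0\}$, hence all of $V_1\setminus\{0\}$ by transitivity of $K$ there. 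Therefore the fibres of this map all have equal size $|O^K|/(|V_1|-1)$ where $O^K$ is the $K$-orbit containing $v$ — wait, $O$ may be a single $K$-orbit or a union of two. In either case $O$ is a union of $K$-orbits each surjecting $K$-equivariantly onto $V_1\setminus\{0\}$, so each such $K$-orbit has length divisible by $|V_1|-1$, and hence $|O|$, a sum of such lengths, is divisible by $|V_1|-1$.

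The main obstacle I anticipate is the bookkeeping around whether $H=K$ or $|H:K|=2$, and the degenerate case $O=(V_1\cup V_2)\setminus\{0\}$; both are handled as above, the former because the divisibility already holds at the level of $K$-orbits (which refine $H$-orbits) and the latter by direct inspection. The substantive content is just the equivariance of $v\mapsto v'$ together with the transitivity of $K$ on $V_1\setminus\{0\}$, the latter being exactly the hypothesis that $(V_1\cup V_2)\setminus\{0\}$ is one $H$-orbit (combined with $H$ permuting $\{V_1,V_2\}$).
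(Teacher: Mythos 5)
Your final argument is correct and is essentially the paper's own proof: both pass to the kernel $K$ of the action on $\{V_1,V_2\}$ (which has index $2$ and is transitive on $V_1\setminus\{0\}$), show each $K$-orbit outside $V_1\cup V_2$ has length divisible by $|V_1|-1$ via the $K$-equivariant projection onto $V_1\setminus\{0\}$ (the paper phrases this as every such $K$-orbit being $(v+x)^K$ for a fixed $v\in V_1\setminus\{0\}$), and then note that each $H$-orbit is a union of at most two such $K$-orbits. The abandoned double-counting attempt in your middle paragraph is harmless since you discard it, but the write-up should be trimmed to the final mechanism.
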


\begin{proof} Let $K$ be the kernel of $H$ acting on $\{V_1,V_2\}$ and note that $K$ is transitive on $V_1 \setminus \{0\}$ and $V_2 \setminus \{0\}$. Take a non-zero vector $v\in V_1$. It is clear that every orbit of $K$ on $V\setminus(V_1\cup V_2)$ has the form $\{(v+x)^k\ |\ k\in K\}$ for some $x\in V_2\backslash\{0\}$.  Then 
\[|\{(v+x)^k\ |\ k\in K\}|=|K: K_{v}\cap K_x|=|K: K_{v}||K_v:K_{v}\cap K_x|=(|V_1|-1)|K_v:K_{v}\cap K_x|.\]
This implies that every orbit of $K$  on $V\setminus (V_1\cup V_2)$ has length divisible by $|V_1|-1$. As $|H: K|=2$, every orbit of $H$ on $V\setminus (V_1\cup V_2)$ is either  an orbit of $K$ or is a union of two orbits of $K$ of equal length. Thus the result follows.
\end{proof}

For the remainder of this section, we shall make the following assumption:\medskip

\f{\bf Assumption~III.}\
\begin{itemize}
 \item Suppose that $G_0$ stabilises a pair $\{V_1, V_2\}$ of subspaces of $V$, where $V=V_1\oplus V_2$ and $\dim(V_1)=\dim(V_2)=m$.
 \item $B_1=(V_1\cup V_2)\setminus\{0\}$.
 \item $\Aut(\Gamma_i)_0$ is not a subgroup of $\GammaL_1(p^d)$ for any $i$.
\end{itemize}

We shall first consider the case where either $\Gamma_2$ or $\Gamma_3$ is isomorphic to $\Gamma_1$.

\begin{lem}\label{B1-B2}
Under Assumptions I,  II and III, if either $\Gamma_2$ or $\Gamma_3$ is isomorphic to $\Gamma_1$, then $G\cong\GG(m)$ as given in Lemma \ref{family2}. 
\end{lem}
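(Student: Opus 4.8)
The plan is to exploit the structure imposed by Assumption~III together with the hypothesis that one of $\Gamma_2,\Gamma_3$ is isomorphic to $\Gamma_1$. First I would use Lemma~\ref{B1-orbit} to conclude that $B_1=(V_1\cup V_2)\setminus\{0\}$ is indeed a $G_0$-orbit, and then Lemma~\ref{lem:hamming} to identify $\Gamma_1$ with the Hamming graph $H(2,|V_1|)$, so that $\Aut(\Gamma_1)=\Sym(V_1)\wr S_2$. Relabelling if necessary, assume $\Gamma_2\cong\Gamma_1$. Then $\Gamma_2$ is also a Hamming graph $H(2,|V_1|)$, so $\Aut(\Gamma_2)=\Sym(W_1)\wr S_2$ for some second direct-sum decomposition $V=W_1\oplus W_2$ with $\dim W_i=m$, and $G_0$ stabilises the pair $\{W_1,W_2\}$. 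Since $B_2$ is the corresponding Hamming suborbit, $B_2=(W_1\cup W_2)\setminus\{0\}$, and in particular $\{V_1,V_2\}\neq\{W_1,W_2\}$.

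The heart of the argument is then to pin down how the two decompositions sit relative to one another and to identify the group they generate. I would argue as follows. Because $G_0\leqslant\GL(V)$ stabilises both $\{V_1,V_2\}$ and $\{W_1,W_2\}$, it normalises a subgroup generated by the corresponding projections/involutions; from $B_1\cap B_2=\emptyset$ and $|B_1|=|B_2|=2(|V_1|-1)$ one deduces that the four subspaces $V_1,V_2,W_1,W_2$ are pairwise in "general position" in a suitable sense. The natural candidate is that there is a tensor decomposition $V=X\otimes Y$ with $\dim X=2$ in which $V_1,V_2$ are the two coordinate summands $\langle x_1\rangle\otimes Y$, $\langle x_2\rangle\otimes Y$ and $W_1,W_2$ are $\langle x_1+x_2\rangle\otimes Y$, $\langle x_1-x_2\rangle\otimes Y$; the stabiliser in $\GL_2$ of both of these pairs of $1$-spaces in $X$ is precisely a $D_8$ (this is where $p=3$ will come in, since over $\GF(3)$ there are exactly four $1$-spaces and $\GL_2(3)$ permutes them as $S_4$ with the stabiliser of a pair of complementary pairs being $D_8$). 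I would then show $G_0$ preserves this tensor decomposition: $B_1\cup B_2$ is a union of two of the relevant $G_0$-orbits, and by Liebeck's Lemma~1.1 (cited in the proof of Lemma~\ref{family2}) the $M$-orbit containing $B_1\cup B_2$ inside the tensor-preserving group $M=\GL_2(p)\circ\GL_m(p)$ forces $p=3$ and $G_0$ to lie in $M$ stabilising the decomposition $V=V_1\oplus V_2$, i.e.\ $G_0\leqslant D_8\circ\GL_m(3)$. Counting orbits (three nontrivial orbits of $G_0$, matching $B_1,B_2,B_3$ with $B_3=B_3(\mathcal G(m))$) and using that $D_8\circ\GL_m(3)$ already realises exactly this orbit pattern, I would conclude $G_0=D_8\circ\GL_m(3)$ and hence $G=\mathcal G(m)$ as a permutation group, invoking Lemma~\ref{family2} for primitivity and $2$-closure to confirm this is consistent with $G$ being $2$-closed of rank $4$.

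The step I expect to be the main obstacle is the rigidity argument that forces the second Hamming decomposition $\{W_1,W_2\}$ into the specific position $\langle x_1\pm x_2\rangle\otimes Y$ relative to the first, and simultaneously forces $p=3$. A priori $W_1$ could meet $V_1$ and $V_2$ in nonzero subspaces of various dimensions, and one must rule all of these out using the transitivity of $(G_0)_{V_1}$ on $V_1\setminus\{0\}$ (from Lemma~\ref{B1-orbit}), the divisibility constraint $|V_1|-1\mid$ every $G_0$-orbit length (Lemma~\ref{B1-orbit-new}) applied to \emph{both} decompositions, and the fact that $B_2$ must be a full $G_0$-orbit of size exactly $2(|V_1|-1)$ lying entirely outside $V_1\cup V_2$. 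The cleanest route is probably: let $K$ be the common kernel of the $G_0$-actions on $\{V_1,V_2\}$ and on $\{W_1,W_2\}$; analyse the $K$-module structure of $V_1$ (which $K$ acts on transitively on nonzero vectors, so $V_1$ is a "transitive linear" module) and show the only way $W_1$ can also be a $K$-invariant complement of the right size is the tensored-diagonal one, which pins $\dim X=2$ and, via the field, $p=3$. Throughout I would lean on Liebeck's classification of affine rank~$3$ groups (Theorem~\ref{subdegree}, class (A2) and the tensor classes (A3)--(A5)) and on Lemma~\ref{lem:hamming} to keep the orbital-graph bookkeeping honest, and I would flag that the almost identical $p=2$ analysis is deferred because there $\GL_2(2)$ has only three $1$-spaces and the corresponding configuration produces $\mathcal H(m)$ instead --- treated in a subsequent lemma.
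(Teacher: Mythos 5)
Your opening moves match the paper's: $B_1=(V_1\cup V_2)\setminus\{0\}$, $\Gamma_1\cong H(2,|V_1|)$, and (after relabelling) $\Gamma_2\cong\Gamma_1$, so that Lemma~\ref{auto-groups} gives a second $G_0$-invariant decomposition $V=W_1\oplus W_2$ with $B_2=(W_1\cup W_2)\setminus\{0\}$, whence $|B_2|=2(p^m-1)$ and $|B_3|=(p^m-3)(p^m-1)$. The gap is in what comes next. The crux of the lemma is to force $p=3$ and to produce a tensor decomposition $V=X\otimes Y$ with $\dim X=2$ in which $V_1,V_2,W_1,W_2$ are four of the lines $\langle x\rangle\otimes Y$. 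You propose to get this by a direct ``rigidity'' analysis of the two decompositions (general position, a common kernel $K$, a transitive-linear-module argument), and you yourself flag this as the main obstacle; but the sketch is never carried out, and as written it is circular: you invoke Liebeck's Lemma~1.1 on the orbits of $M=\GL_2(p)\circ\GL_m(p)$ to conclude that $B_1\cup B_2$ is the simple-tensor orbit and that $p=3$, yet that lemma only applies once you already know that a $\dim X=2$ tensor decomposition is preserved --- which is exactly what is to be established. Nothing in your sketch excludes, say, a $p=2$ configuration in which $G_0$ preserves two transversal decompositions $\{V_1,V_2\}$ and $\{W_1,W_2\}$ with no ambient tensor structure, and it is not at all clear that the $K$-module analysis alone could do so.

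The paper obtains the missing input from the third orbital graph, which your argument never uses. Since $\Gamma_3\not\cong\Gamma_1$ and is not its complement (compare valencies), Lemma~\ref{auto-groups} forces $\Aut(\Gamma_3)$ to be an affine rank~$3$ group, and $B_1\cup B_2$ is one of its two suborbits, of size $4(p^m-1)$. Assumptions~II and~III strip away type (A1) and Tables~\ref{tab-sub-2}--\ref{tab-sub-3}, type (A2) is impossible for this subdegree, and matching $4(p^m-1)$ and $(p^m-3)(p^m-1)$ against the remaining rows of Table~\ref{tab-sub-1} leaves only type (A3)--(A5) with $q+1=4$, i.e.\ $q=p=3$. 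That single step simultaneously pins down the prime and legitimately delivers the tensor decomposition via \cite[Lemma~1.1]{Liebeck-affine} applied to $\Aut(\Gamma_3)_0\leqslant\GL_2(3)\circ\GL_m(3)$; the identification $V_1=\langle x_1\rangle\otimes Y$, $V_2=\langle x_2\rangle\otimes Y$ then follows because $V_1$ consists of simple tensors and $\dim V_1=m\geqslant 3$, and $2$-closure gives $G_0=D_8\circ\GL_m(3)$. To repair your proof you would need either to bring $\Aut(\Gamma_3)$ into play in this way or to supply a complete, non-circular rigidity argument for the two decompositions; at present neither is in place.
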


\begin{proof} Recall the three orbits of $G_0$ are $B_1, B_2$ and $B_3$ with  $B_1=(V_1\cup V_2)\setminus\{0\}$ and   $|B_2|+|B_3|=(p^m-1)^2$. Assume that $|B_2|\leqslant |B_3|$. By Lemma~\ref{B1-orbit-new}, we have $p^m-1$ divides $\gcd(|B_2|, |B_3|)$.

If $\Gamma_2\cong\Gamma_3\cong\Gamma_1$, then $|B_2|=|B_3|=2(p^m-1)$ and so $\frac{(p^m-1)^2}{2}=2(p^m-1)$.
It follows that $p^m-1=4$, contrary to the  assumption that $p^d=p^{2m}\geqslant 4096$.
Consequently, we have $\Gamma_2\ncong\Gamma_3$. If $\Gamma_3\cong\Gamma_1$, then we must have $|B_3|=2(p^m-1)$ and then $|B_2|=(p^m-3)(p^m-1)$. Since $|B_2|\leq|B_3|$, it follows that $p^m-3\leqslant 2$ and hence $p^m=5$ or $4$. Again, this is contrary to our assumption $p^d=p^{2m}\geqslant 4096$. Thus $\Gamma_2\cong\Gamma_1$ and so $|B_3|=(p^m-3)(p^m-1)$. Since $n\geqslant 4096$ we may assume that $p^m-3\geqslant 4$. Note that $|B_1|$ and $|B_2|$ are the valency of the Hamming graph $H(2,|V_1|)$ and so by comparing valencies, $\Gamma_3$ is not the complement of $H(2,|V_1|)$. Thus by  Lemma~\ref{auto-groups}, $\Aut(\Gamma_3)$ is affine. Hence  $\Aut(\Gamma_3)$ is one of the groups in Proposition~\ref{subdegree}. Clearly, $|B_3|=(p^m-3)(p^m-1)> 4(p^m-1)=|B_1\cup B_2|$ and $B_1\cup B_2$ is an orbit of $\Aut(\Gamma_3)_0$.

By Assumption III, we have that $\Aut(\Gamma_3)_0\not\leqslant\GammaL_1(p^{2m})$.  As $p^{2m}\geq4096$, we may assume that $(p,m)\neq (3,2), (2,3)$. Since $|B_1\cup B_2|=4(p^m-1)$ and $|B_3|=(p^m-3)(p^m-1)$,  we have only one possibility for the cases (A3)--(A11) in Table \ref{tab-sub-1}, namely that
 $p=3$, $m\geqslant 2$ and $\Aut(\Gamma_3)_0$ is of type (A3)--(A5) in Table \ref{tab-sub-1}.
Thus  $V=X\otimes Y$ with dim$(X)=2$ and dim$(Y)=m$, and  $\Aut(\Gamma_3)_0\leqslant \GL_2(3)\circ\GL_m(3)=:M$.
Moreover, by \cite[Lemma~1.1]{Liebeck-affine},  the orbits of $M$ are 
$$\{x\otimes y\mid x\in X, y\in Y\} \quad \textrm{and} \quad \{x_1\otimes y_1+x_2\otimes y_2\mid X=\langle x_1,x_2\rangle \textrm{ and } \dim(\langle y_1,y_2\rangle)=2 \},$$
and since $\Aut(\Gamma_3)$ has rank 3, these must also be the orbits of $\Aut(\Gamma_3)_0$.
Since $|B_1\cup B_2|=4(3^m-1)$, we have
$$B_1\cup B_2= \{x\otimes y\mid x\in X, y\in Y\}.$$ 
Now $V=V_1\oplus V_2$ with $B_1=(V_1\cup V_2)\backslash\{0\}$.  Suppose that $x\otimes y,x'\otimes y'\in V_1$. Since $\dim(V_1)=m\geqslant 3$ we may choose $y$ and $y'$ such that $\dim\langle y,y'\rangle=2$. Now $x\otimes y+x'\otimes y'\in V_1$. Then as $V_1$ only contains simple tensors it follows that $\dim\langle x,x'\rangle=1$. Hence $V_1=\langle x_1\rangle \otimes Y$ and $V_2=\langle x_2\rangle\otimes Y$ where $X=\langle x_1,x_2\rangle$. Since $G$ is 2-closed it follows that $G_0$ is the subgroup of $M$ fixing $\{V_1,V_2\}$ and so $G_0\cong D_8\circ \GL_m(3)$. Thus $G\cong\GG(m)$.
 \end{proof}

Next we show that in the remaining cases we must have that  both $\Aut(\Gamma_2)_0$ and $\Aut(\Gamma_3)_0$ are of type (A3)--(A5)

\begin{prop}\label{affine-impri}
Under Assumptions I, II and III, if neither $\Gamma_2$ nor $\Gamma_3$ is isomorphic to $\Gamma_1$ then both $\Aut(\Gamma_2)_0$ and $\Aut(\Gamma_3)_0$ are of type {\rm(A3)--(A5)} of Proposition~{\rm\ref{subdegree}}.
\end{prop}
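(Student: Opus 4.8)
Under Assumptions I, II and III, we know from Lemma~\ref{auto-groups} that $\Gamma_1$ is the Hamming graph $H(2,p^m)$ or its complement, and that for $i=2,3$ either $\Aut(\Gamma_i)$ is affine with socle $V$ or $\Aut(\Gamma_i)\cong\Aut(\Gamma_1)=\Sym(p^m)\wr S_2$; the latter would force $\Gamma_i\cong\Gamma_1$ or its complement. The plan is to rule out the ``complement'' possibility by a valency count and then rule out all types in Table~\ref{tab-sub-1} other than (A3)--(A5) by an arithmetic analysis of the subdegrees, exactly in the style of Lemma~\ref{not-affine}, but now with the extra constraint that one of the three subdegrees equals $2(p^m-1)$ (namely $|B_1|$) and, by Lemma~\ref{B1-orbit-new}, that $p^m-1$ divides each of $|B_2|$ and $|B_3|$.

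First I would dispose of the case $\Gamma_i\cong\overline{\Gamma_1}$. The complement of $H(2,p^m)$ has valency $(p^m-1)^2$, so if $\Gamma_2$ or $\Gamma_3$ were this complement, the third orbit would be empty (since $|B_1|+(p^m-1)^2=p^{2m}-1$), which is absurd as $G$ has rank $4$. Hence by Lemma~\ref{auto-groups}(3), for $i=2,3$ the group $\Aut(\Gamma_i)$ is affine with socle $V$, and $\Aut(\Gamma_i)_0$ is one of the groups of Theorem~\ref{subdegree}, listed in Table~\ref{tab-sub-1} (types (A2)--(A11) are in play; type (A1) is excluded by the third clause of Assumption~III). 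Type (A2) is excluded just as in Lemma~\ref{not-affine}: it would force $\Aut(\Gamma_i)=\Sym(p^m)\wr S_2$, contradicting that this group is not affine for $p^{2m}\geqslant 4096$. So each of $\Aut(\Gamma_2)_0$, $\Aut(\Gamma_3)_0$ has type among (A3)--(A11), and $d=2m$.

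The main work is then to show the type cannot be (A6)--(A11). For $i=2,3$ write $\{m_1^{(i)},m_2^{(i)}\}$ for the pair of non-trivial subdegrees of $\Aut(\Gamma_i)$; one of these equals the $G$-subdegree $|B_i|$ and the other equals $p^{2m}-1-|B_i|$. Since $|B_1|=2(p^m-1)$, we have $|B_2|+|B_3|=(p^m-1)^2$, and by Lemma~\ref{B1-orbit-new}, $(p^m-1)\mid |B_2|$ and $(p^m-1)\mid|B_3|$. Now I would run through the possibilities for $|B_2|$ from Table~\ref{tab-sub-1}. For types (A6) and (A7) the subdegrees have the shape $(q^{a-1}\pm1)(q^a\mp1)$ and $q^{a-1}(q-1)(q^a\mp1)$ with $q^{2a}=p^{2m}$; for (A8)--(A11) they have analogous shapes with specific exponents. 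In each case one of $\{|B_2|,p^{2m}-1-|B_2|\}$ is divisible by $p$ and the other is not (Lemma~\ref{lem:larger}); moreover the non-$p$-divisible one is a product of two ``$\pm1$'' factors $(q^b+1)(q^c-1)$. Imposing $(p^m-1)\mid|B_2|$ and then also $(p^m-1)\mid|B_3|=(p^m-1)^2-|B_2|$ (equivalently $(p^m-1)^2\mid |B_2|+|B_3|$ together with divisibility of each summand) forces strong congruences on the exponents $a,b,c$ relative to $m$, and I expect these to collapse in each type to either a contradiction or to $p^{2m}<4096$. A clean way to organize this: since $\gcd(p^m-1,p^m+1)\mid 2$, divisibility of a product of the form $(q^b\pm1)(q^c\mp1)$ by $p^m-1$ forces $p^m-1$ to divide one of the factors up to a factor of $2$, which for $p$ odd is impossible unless the factor is large, and for $p=2$ pins the exponent down exactly; a short finite check then eliminates the remaining small configurations. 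The analogous, slightly easier, argument applies to $\Aut(\Gamma_3)_0$.

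I expect the \textbf{main obstacle} to be the bookkeeping in the (A6)/(A7) analysis when $p=2$: there the ``exceptional'' case of Lemma~\ref{lem:larger}(2) means $|B_i|$ need not be the larger of the two subdegrees, so both assignments $|B_i|=m_1^{(i)}$ and $|B_i|=m_2^{(i)}$ must be checked, and the divisibility of $|B_3|=(2^m-1)^2-|B_2|$ by $2^m-1$ becomes a genuine Diophantine condition on $a$ versus $m$ (as in the final paragraph of the proof of Lemma~\ref{not-affine}, where one gets $2^s-2^a=2^{m-1}$ type relations). Once one knows both $\Aut(\Gamma_2)_0$ and $\Aut(\Gamma_3)_0$ have type (A3)--(A5), the conclusion of the proposition is immediate.
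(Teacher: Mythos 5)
Your setup is correct and follows the paper's route: ruling out the complement of $H(2,p^m)$ by the valency count $2(p^m-1)+(p^m-1)^2=p^{2m}-1$, invoking Lemma~\ref{auto-groups} to get that $\Aut(\Gamma_2)$ and $\Aut(\Gamma_3)$ are affine, excluding types (A1) and (A2), and then attacking the remaining types via the identity $|B_2|+|B_3|=(p^m-1)^2$ together with the divisibility $(p^m-1)\mid\gcd(|B_2|,|B_3|)$ from Lemma~\ref{B1-orbit-new}. This is exactly the paper's strategy.

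However, there is a genuine gap: the decisive arithmetic is never carried out, only predicted (``I expect these to collapse in each type''). Moreover, the organizing principle you propose to carry it out --- that divisibility of a product $(q^b\pm1)(q^c\mp1)$ by $p^m-1$ ``forces $p^m-1$ to divide one of the factors up to a factor of $2$'' --- is false as a general statement, since $p^m-1$ could distribute its prime factors across both terms. What makes the argument work, and what the paper exploits, is the structural observation that for every type (A3)--(A11) with $q^a=p^m$ one factor of each subdegree is \emph{exactly} $p^m-1$ or $p^m+1$, with cofactor of the form $s\pm1$ or $p^m-s$ for $s$ a power of $p$ with $1<s<p^m$. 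The $(p^m+1)$-forms are then killed because $\gcd(p^m-1,p^m+1)\mid 2$ forces $\tfrac{p^m-1}{2}$ to equal the cofactor, contradicting $s$ being a power of $p$; and for the $(p^m-1)$-forms, writing $|B_2|=(p^m-1)c_2$, $|B_3|=(p^m-1)c_3$ with $c_2+c_3=p^m-1$ reduces to three cases, of which only the mixed case $c_2=s+1$, $c_3=p^m-r$ survives, yielding $r=s+2$ and hence $(s,r)=(2,4)$ with $p=2$ (two powers of $p$ cannot differ by $2$ otherwise). Even then one still has to check that subdegrees $3(2^m-1)$ and $(2^m-4)(2^m-1)$ with $n\geqslant 4096$ are only realised by types (A3)--(A5) and not by (A6)--(A10) --- a final step absent from your sketch. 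Without these computations the claim that (A6)--(A11) are impossible remains unproved.
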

\begin{proof} By Lemma \ref{B1-orbit} we may assume that  $B_1=(V_1\cup V_2)\setminus\{0\}$.
Then by Lemma \ref{lem:hamming}, the orbital graph $\Gamma_1$ is isomorphic to the Hamming graph $H(2, |V_1|)$, and $\Aut(\Gamma_1)\cong (\Sym(V_1)\times\Sym(V_2))\rtimes S_2$.
 Then $|B_2|+|B_3|=|V|-1-2(p^m-1)=(p^{m}-1)^2$. 
 
 Suppose that neither $\Gamma_2$ nor $\Gamma_3$ are isomorphic to $\Gamma_1$.  Since they have valency less than $|V|-1-|B_1|$ they are not isomorphic to the complement of $\Gamma_1$ either. Thus
  Lemma~\ref{auto-groups} implies that $\Aut(\Gamma_i)$ is affine for $i=2$ and 3. Now $\Aut(\Gamma_2)$ and $\Aut(\Gamma_3)$ are point stabilisers of rank 3 affine groups with suborbits $B_2$ and $B_3$ respectively. By Assumption III, neither $\Aut(\Gamma_2)_0$ nor $\Aut(\Gamma_3)_0$ are contained in $\GammaL_1(p^{2m})$. Moreover, neither are of type (A2) (as otherwise one of $\Gamma_2$ or $\Gamma_3$ would be isomorphic to $\Gamma_1$ or its complement.). Since  $|B_2|+|B_3|<p^{2m}-1$, it follows from  Proposition~\ref{subdegree},  that $|B_2|$ and $|B_3|$ appear in different rows of Table \ref{tab-sub-1}, but not from the rows corresponding to types (A1) or (A2).  The subdegrees listed there are of one of the following forms: $$ (p^m-1)(s-1),(p^m+1)(s-1),(p^m-1)(p^m-s),(p^m+1)(p^m-s),$$
where $1<s<p^m$ is a power of $p$. By Lemma \ref{B1-orbit-new}, $p^m-1$ divides $\gcd(|B_2|, |B_3|)$. Moreover, $\gcd(p^m-1,p^m+1)$ divides $2$, with equality if and only if $p$ is odd. 

Suppose first that either $|B_2|$ or $|B_3|$ is of the form $(p^m+1)(s-1)$. Since $p^m-1$ divides $\gcd(|B_2|, |B_3|)$ and  noting that $1<s<p^m$, we deduce that $p$ is odd and $\frac{p^m-1}{2}=s-1$. Thus $p^m-1=2s-2$ and so $p^m-2s=-1$, contradicting the fact that $s>1$ is a power of $p$. 

Next suppose that either $|B_2|$ or $|B_3|$ is of the form $(p^m+1)(p^m-s)$. Then we deduce that $p$ is odd and $\frac{p^m-1}{2}=p^m-s$. Thus $p^m-1=2(p^m-s)$, which contradicts the fact that $s>1$ is a power of $p$.

Thus it remains to consider the following possibilities.
\medskip

\f{\bf Case~1}\ $|B_2|=(s+1)(p^m-1)$ and $|B_3|=(r+1)(p^m-1)$, where $s$ and $r$ are powers of $p$, and $s,r< p^m$.\medskip

In this case, we have $(p^m-1)^2=|B_2|+|B_3|=(s+r+2)(p^m-1)$, and hence $p^m-1=s+r+2$. It follows that $p^m=s+r+3$. As $s$ and $r$ are two prime power of $p$ and $s,r< p^m$, one has $s=r=3$, and hence $p^m=9$. Consequently, we have $p^{2m}=81$, contrary to our assumption that $n\geqslant 4096$.

\medskip
\f{\bf Case~2}\ $|B_2|=(p^m-1)(p^m-s)$ and $|B_3|=(p^m-1)(p^m-r)$ where  $s$ and $r$ are powers of $p$, and $s, r^t<p^m$.

In this case we have $(p^m-1)^2=|B_2|+|B_3|=(p^m-1)(2p^m-s-r)$. Thus $p^m-1=2p^m-s-r$, contradicting $s$ and $r$ being powers of $p$ with $r,s>1$. 

\medskip
\f{\bf Case~3}\ $|B_2|=(s+1)(p^m-1)$ and $|B_3|=(p^m-r)(p^m-1)$, where $s$ and $r$ are powers of $p$, and $s, r<p^m$.\medskip

In this case, we have $s+1+p^m-r=p^m-1$, and then $s+2=r$. Since $s=2$ and $n=p^{2m}\geqslant 4096$ it follows from the list of subdegrees in Table \ref{tab-sub-1} that  $\Aut(\Gamma_2)_0$ is of type (A3)--(A5).  Since $r=4$ we see that $\Aut(\Gamma_3)_0$ is not of types (A9) or (A10). Moreover,  if  $\Aut(\Gamma_3)_0$ is of types (A6)--(A8), then in all these cases, the fact that $r=4$ implies that $n<4096$, a contradiction. Hence $\Aut(\Gamma_3)_0$ is also of type (A3)--(A5). This completes the proof.
\end{proof}

Finally, we show that $G\cong\mathcal{H}(m/2)$.
\begin{lem}\label{B2-B3}
Suppose that Assumptions I, II and III hold.  If $\Aut(\Gamma_2)_0$ and $\Aut(\Gamma_3)_0$ are both of type (A3)--(A5) of Proposition~\ref{subdegree}, then $G\cong\mathcal{H}(m/2)$.
\end{lem}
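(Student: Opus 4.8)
The plan is to locate $G_0$ inside the stabiliser of the $\GF(4)$-tensor decomposition that $\Aut(\Gamma_3)_0$ preserves, identify it with the point stabiliser of $\mathcal{H}(m/2)$, and conclude by a $2$-closure comparison. First I would extract the numerical data. Since $\Aut(\Gamma_2)_0$ and $\Aut(\Gamma_3)_0$ are of type (A3)--(A5), each has degree $p^{2m}$ and, writing its field as $\GF(q_i)$ and noting that $q_i$ raised to the $\GF(q_i)$-dimension of the tensor factor $Y$ equals $p^m$, its two non-trivial subdegrees from Table~\ref{tab-sub-1} simplify to $(q_i+1)(p^m-1)$ and $(p^m-q_i)(p^m-1)$, where $q_i$ is a power of $p$ with $1<q_i<p^m$. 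By Assumption~III, $|B_1|=|(V_1\cup V_2)\setminus\{0\}|=2(p^m-1)$, so $|B_2|+|B_3|=(p^m-1)^2$. Matching $|B_2|$ and $|B_3|$ against these two shapes and discarding the degenerate solutions exactly as in the proof of Proposition~\ref{affine-impri} (this is where $n\geqslant 4096$ enters) yields $p=2$ and, after possibly interchanging $\Gamma_2$ and $\Gamma_3$, $q_2=2$, $q_3=4$, $|B_2|=3(2^m-1)$ and $|B_3|=4(2^{m-2}-1)(2^m-1)=(2^m-4)(2^m-1)$. In particular $m$ is even; set $\ell=m/2$, so that $\ell\geqslant 3$ and $\Aut(\Gamma_3)_0$ preserves a tensor decomposition $V=X\otimes_{\GF(4)}Y$ with $\dim_{\GF(4)}X=2$ and $\dim_{\GF(4)}Y=\ell$, its orbit $B_1\cup B_2$ of size $5(2^m-1)$ being, by \cite[Lemma~1.1]{Liebeck-affine}, the set of non-zero simple tensors. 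Since $G_0\leqslant\Aut(\Gamma_3)_0$, the group $G_0$ preserves this decomposition, so $G_0\leqslant(\GL_2(4)\circ\GL_\ell(4)).2$, the full stabiliser of the decomposition in $\GL_{4\ell}(2)$ (there is no factor swap since $2\neq\ell$).

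Next I would pin down $V_1$ and $V_2$ as $\GF(4)$-subspaces. As $B_1\subseteq B_1\cup B_2$, every non-zero vector of $V_1$ is a simple tensor $x\otimes y$; and since $V_1$ is closed under addition, for any two such vectors $x\otimes y,\,x'\otimes y'\in V_1$ the sum $x\otimes y+x'\otimes y'$ is again simple, forcing $\langle x\rangle_{\GF(4)}=\langle x'\rangle_{\GF(4)}$ or $\langle y\rangle_{\GF(4)}=\langle y'\rangle_{\GF(4)}$. A short two-colouring argument then shows that either the ``$X$-line'' $\langle x\rangle_{\GF(4)}$ is constant over all non-zero $x\otimes y\in V_1$, or the ``$Y$-line'' $\langle y\rangle_{\GF(4)}$ is; the second alternative would force $V_1\subseteq X\otimes\langle y_0\rangle_{\GF(4)}$, which has $\GF(2)$-dimension $4$, contradicting $\dim_{\GF(2)}V_1=2\ell\geqslant 6$. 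Hence $V_1=\langle x_1\rangle_{\GF(4)}\otimes Y$ for some $\GF(4)$-line $\langle x_1\rangle$ of $X$ (comparing $\GF(2)$-dimensions), and likewise $V_2=\langle x_2\rangle_{\GF(4)}\otimes Y$; from $V=V_1\oplus V_2$ we get $X=\langle x_1\rangle\oplus\langle x_2\rangle$.

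Finally I would identify $G$. By Assumption~III the group $G_0$ stabilises $\{V_1,V_2\}$, hence — lying inside $(\GL_2(4)\circ\GL_\ell(4)).2$ — it stabilises the pair of $\GF(4)$-lines $\{\langle x_1\rangle,\langle x_2\rangle\}$ of $X$. The stabiliser of such a line-pair in $(\GL_2(4)\circ\GL_\ell(4)).2$ is $((C_3\wr S_2)\circ\GL_\ell(4)).2$, and since all pairs of distinct $\GF(4)$-lines of $X$ are $\GL_2(4)$-equivalent, this is a conjugate of $\mathcal{H}(\ell)_0$; thus $G=V\rtimes G_0\leqslant\mathcal{H}(\ell)$, up to permutational isomorphism. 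Now $\mathcal{H}(\ell)$ is $2$-closed of rank $4$ by Lemma~\ref{family3}, so $\mathcal{H}(\ell)_0$ has exactly three orbits on $V\setminus\{0\}$; since $G_0\leqslant\mathcal{H}(\ell)_0$ already has the three orbits $B_1,B_2,B_3$ partitioning $V\setminus\{0\}$, these must be precisely the $\mathcal{H}(\ell)_0$-orbits, so $G$ and $\mathcal{H}(\ell)$ have the same orbits on $V\times V$. Therefore $\mathcal{H}(\ell)\leqslant G^{(2)}=G$, whence $G=\mathcal{H}(\ell)=\mathcal{H}(m/2)$.

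The step I expect to be the main obstacle is the middle one: showing that an $\GF(2)$-subspace of $X\otimes_{\GF(4)}Y$ all of whose non-zero vectors are $\GF(4)$-simple tensors must be of the form $\langle x_1\rangle_{\GF(4)}\otimes Y$. The delicate point is not the two-colouring dichotomy itself but that it is the $X$-line, not the $Y$-line, that is forced to be constant; this is exactly where the inequality $2\ell>4$, equivalently the standing hypothesis $n\geqslant 4096$, is genuinely used (the case $m/2=2$, of degree $256$, is treated separately in Lemma~\ref{lem:small}).
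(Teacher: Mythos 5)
Your proposal is correct and follows essentially the same route as the paper's proof: matching the (A3)--(A5) subdegree shapes against $|B_2|+|B_3|=(p^m-1)^2$ to force $p=2$, $(q_2,q_3)=(2,4)$, identifying $B_1\cup B_2$ with the simple tensors of the $\GF(4)$-decomposition $V=X\otimes Y$ via \cite[Lemma~1.1]{Liebeck-affine}, deducing $V_i=\langle x_i\rangle\otimes Y$ from the fact that $V_1$ consists of simple tensors and $\dim_{\GF(2)}V_1=m\geqslant 6$, and concluding that $G_0$ is the line-pair stabiliser $((C_3\wr S_2)\circ\GL_{m/2}(4)).2$. Your two-colouring argument for the simple-tensor subspace and your explicit $2$-closure comparison at the end are slightly more detailed versions of steps the paper states tersely, but the underlying argument is the same.
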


\begin{proof}
Without loss of generality, suppose that $|B_2|\leqslant |B_3|$. Since $\Aut(\Gamma_2)_0$ is of  type (A3)--(A5) of Proposition~\ref{subdegree} we have that $p^d=p^{2m}=q^{2\ell}$ for some power $q$ of $p$ and positive integer $\ell>1$. Moreover, $\Aut(\Gamma_2)_0$ preserves a decomposition $V=U\otimes W$ where $U$ and $W$ are $\GF(q)$-spaces  with dimension 2 and $\ell$ respectively. By Table~1, the sizes of the two orbits of $\Aut(\Gamma_2)_0$ on $V^*$ are:
\[(q+1)(q^\ell-1) \quad \textrm{and} \quad q(q^{\ell-1}-1)(q^\ell-1)\]
and by \cite[Lemma~1.1]{Liebeck-affine}, the corresponding $\Aut(\Gamma_2)_0$ orbits are 
\[ \{u\otimes w\mid u\in U,w\in W\}\backslash\{0\} \quad \textrm{and} \quad \{u_1\otimes v_1+u_2\otimes v_2\mid U=\langle u_1,u_2\rangle \textrm{ and } \mathrm{dim}(\langle v_1,v_2\rangle)=2\}.\]
As $p^d\geq4096$, we have $(p, m)\neq (2,2)$, and hence $(q+1)(q^\ell-1)\leqslant q(q^{\ell-1}-1)(q^\ell-1)$. Since $|B_2|\leqslant |B_3|$ it follows that $|B_2|=(q+1)(q^\ell-1)$. Hence $B_2=\{u\otimes w\mid u\in U,w\in W\} $.

Next, since  $\Aut(\Gamma_3)_0$ is also of type (A3)--(A5) then $|B_3|=(r+1)(r^s-1)$ or $r(r^{s-1}-1)(r^s-1)$, where $r^s=p^m$. Note that $|B_2|+|B_3|=(p^m-1)^2$. Suppose first that $|B_3|=(r+1)(r^s-1)$. Then
$(p^m-1)^2=|B_2|+|B_3|=(q+r+2)(p^m-1)$, and then $q+r+3=p^m$. It follows that $q=r=3$ and $(p,m)=(3,2)$, which is impossible because $p^{2m}\geq4096$. 
Thus $|B_3|=(r^s-r)(r^s-1)$. Then
$(p^m-1)^2=|B_2|+|B_3|=(q+p^m-r+1)(p^m-1)$, and then $q+2=r$. It follows that $(q, r)=(2,4)$. Thus $4^{s}=2^{m}=2^\ell$ and so $m=\ell=2s$. Then $|B_2|=3(2^m-1)$ and $|B_3|=(2^m-4)(2^m-1)$. Since $p^{2m}\geqslant 4096$, one has $m=2s\geqslant 6$.

Now $\Aut(\Gamma_3)_0$ stabilises a decomposition $V=X\otimes Y$, where $X$ and $Y$ are $2$- and $s$-dimensional $\GF(4)$-subspaces of $V$, respectively.
By \cite[Lemma~1.1]{Liebeck-affine},  the orbits of $\Aut(\Gamma_3)_0$ are 
$$\{x\otimes y\mid x\in X, y\in Y\}\backslash\{0\} \quad \textrm{and} \quad \{x_1\otimes y_1+x_2\otimes y_2\mid X=\langle x_1,x_2\rangle \textrm{ and } \dim(\langle y_1,y_2\rangle)=2 \}.$$
Since $B_1\cup B_2$ is an orbit of $\Aut(\Gamma_3)_0$ of size $5(2^m-1)$, we have
$$B_1\cup B_2= \{x\otimes y\mid x\in X, y\in Y\}.$$ 
Moreover, $\Aut(\Gamma_3)_0$ is the stabiliser of the decomposition $X\otimes Y$ in $\GammaL_{m}(4)$ and so $\Aut(\Gamma_3)_0=(\GL_{2}(4)\circ\GL(m/2,4)).2$

Recall that $V=V_1\oplus V_2$ with $B_1=(V_1\cup V_2)\backslash\{0\}$ where $V_1$ and $V_2$ are $m$-dimensional $\GF(2)$-subspaces of $V$.  Take $x\otimes y, x'\otimes y'\in V_1$. Since $\dim(V_1)=m\geqslant 6$, we can choose $y$ and $y'$ such that $\langle y, y'\rangle$ is a $2$-dimensional $\GF(4)$-subspace. Now $x\otimes y+x'\otimes y'\in V_1$. Then as $V_1$ only contains simple tensors, it follows that $\langle x, x'\rangle$ is a $1$-dimensional $\GF(4)$-subspace. Hence, we have $V_1=\langle x_1\rangle\otimes Y$ for some $x_1\in X$ and similarly, 
$V_2=\langle x_2\rangle\otimes Y$ where $X=\langle x_1,x_2\rangle$. 
Then \[B_2=(\langle x_1+x_2\rangle\otimes Y\cup\langle x_1+\lambda x_2\rangle\otimes Y\cup\langle x_1+\lambda^2x_2\rangle\otimes Y)\backslash\{0\}\]
where $\lambda$ is a generator of the multiplicative group of $\GF(4)$. Hence $G_0$ is the stabiliser in $\Aut(\Gamma_3)_0$ of the partition $V=V_1\oplus V_2$ and so $G_0=((C_3\wr C_2)\circ\GL(m/2,4)).2$. Thus $G\cong\mathcal{H}(m/2)$ as in Lemma \ref{family3}.
\end{proof}

\section{Proof of Theorem~\ref{main-theorem}}

By Lemmas~\ref{family2}, \ref{family1} and \ref{lem:small}, the groups $\GG(m)$, $\mathcal{H}(m)$ and the seven small groups in Theorem~\ref{main-theorem}~(3) are 2-closed groups of rank 4 that are not the automorphism group of a digraph. Conversely, suppose that $G$ is a 2-closed group of rank 4 of degree $n$ that is not the automorphism group of a digraph such that $G\not\leqslant \AGammaL_1(p^d)$ with $n=p^d$. If $n<4096$ then Lemma \ref{lem:small} implies that $G$ is given by the theorem. Suppose that $n>4096$.  By Lemma \ref{primitive} we may assume that Assumption I holds. Proposition~\ref{affine-simple} then implies that $G$ is affine with socle $V\cong\mz_p^d$, and Lemma~\ref{lem:istable1} then allows us to assume that Assumption II holds. Since $G$ is not a subgroup of $\AGammaL_1(p^d)$ we have that $\Aut(\Gamma_i)\not\leqslant\AGammaL_1(p^d)$ for any $i$ and so by Lemma \ref{not-affine}, $\soc(G)\neq\soc(\Aut(\Gamma_i))$ for some $i$. Then Lemma \ref{auto-groups} implies that Assumption III holds.   If either $\Gamma_2$ or $\Gamma_3$ is isomorphic to $\Gamma_1$, then Lemma \ref{B1-B2} implies that $G=\mathcal{G}(m)$ where $p^d=p^{2m}$. On the other hand, if neither $\Gamma_2$ nor $\Gamma_3$ are isomorphic to $\Gamma_1$, then Lemmas \ref{affine-impri} and \ref{B2-B3} imply that $G\cong \mathcal{H}(m/2)$ where $p^d=p^{2m}$ and $m$ is even.

\subsection*{Acknowledgements} This work was supported by the National Natural Science Foundation of China (11671030, 12071023, 12161141005), the 111 Project of China (B16002) and the Fundamental Research Funds for the Central Universities (2022JBCG003). The first and second authors were supported by Australian Research Council Discovery Project Grant DP190101024. They thank Beijing Jiaotong University for the generous hospitality during their respective visits, and thank the Isaac Newton Institute for Mathematical Sciences, Cambridge, for support and hospitality during the programme Groups, Representations and Applications: New Perspectives (supported by EPSRC grant no. EP/R014604/1), where further work on this paper was undertaken.

{}

\end{document}